\documentclass[11pt,letterpaper]{article}
\usepackage{tikz}
\usetikzlibrary{arrows.meta,positioning}
\usepackage{tikz-cd}
\usepackage{amssymb}
\usepackage[T1]{fontenc}
\usepackage[utf8]{inputenc}
\usepackage{babel}
\usetikzlibrary{babel}
\usepackage{booktabs}
\usepackage{xcolor}
\usepackage{geometry}
\usepackage{mathrsfs}
\usepackage{amsfonts,amsmath,lmodern,fancyhdr,lastpage,graphicx,nccfoots,caption}
\usepackage{afterpage}

\usepackage{blkarray, bigstrut}
\usepackage{tikz}
\usepackage[all]{xy}
\usepackage{capt-of}
\usepackage{graphicx}
\usepackage{enumitem}
\usepackage{amssymb}
\usepackage{amsmath}
\usepackage{amsfonts}
\usepackage{amsthm}
\usepackage{amscd}
\usepackage{epstopdf}
\usepackage{mathtools}
\usepackage{color}

\newcommand{\FF}{\mathbb{F}}
\newcommand{\ZZ}{\mathbb{Z}}
\newcommand{\NN}{\mathbb{N}}

\newcommand{\CC}{\mathbb{C}}
\newcommand{\PP}{\mathbb{P}}

\DeclareMathOperator{\ord}{ord}

\newenvironment{definition}
{ \begin{flushleft} \textbf{Definition:}}
	{ \end{flushleft} }

\newenvironment{remark}
{ \begin{flushleft} \textbf{Remark:}}
	{ \end{flushleft} }

\newtheorem{lemma}{Lemma}

\newtheorem{theorem}{Theorem}

\newtheorem{corollary}{Corollary}

\newtheorem{proposition}{Proposition}

\newtheorem{conjecture}{Conjecture}

\usepackage{amsmath}

\newtheorem{lettertheorem}{Theorem} 
\newtheorem{letterproposition}{Proposition} 

\vfuzz \hfuzz
\newcounter{a}
\ifodd\thea\else\stepcounter{a}\fi

\usepackage{hyperref}
\usepackage{cleveref}

\begin{document}
	\thispagestyle{plain}
	\begin{center}
		\Large
		\textsc{Surfaces of general type with extremal cotangent dimension }
	\end{center}

	\begin{center} \itshape Dedicated to Fedya on the occasion of his eightieth birthday \end{center}
	\begin{center}
 \textit{Damian Brotbek}
 \footnote{Damian Brotbek \\
Université de Lorraine,
CNRS UMR 7502, IECL, F-54000 Nancy, France\\ 
email: damian.brotbek@univ-lorraine.fr}
\smallskip, 
\textit{Bruno De Oliveira}\footnote {Bruno De Oliveira\\University of Miami, Coral Gables, 1365 Memorial Dr
Miami, FL 33134\\email:bdeolive@miami.edu}
\smallskip, 
  \textit{Erwan Rousseau} \footnote{Erwan Rousseau \\
Univ Brest, CNRS UMR 6205, Laboratoire de Mathematiques de Bretagne Atlantique, F-29200 Brest, France \\ email: erwan.rousseau@univ-brest.fr}

	\end{center}
	
	\noindent
	\textbf{Abstract} 
    We study the geography of surfaces of general type  with extremal cotangent dimension, in other words surfaces which have either no global holomorphic symmetric differentials at all or the maximal asymptotic growth of their number, i.e. big cotangent bundle. We are mainly interested in surfaces with low slope $K^2/\chi$, which, as far as maximal cotangent dimension is concerned, were out of reach of previous methods. We prove vanishing theorems for symmetric logarithmic differentials on minimal rational surfaces and for differentials on their double covers and extend a bigness criterion of Sakai to fibrations of general type in the sense of Campana. As a consequence, we prove, on the one hand that generic Horikawa surfaces have no nontrivial symmetric differentials and on the other hand that there exist Horikawa surfaces with big cotangent bundle.
	\medskip

	\noindent\textbf{keywords:} 
	big cotangent bundle; cotangent dimension; surfaces of general type; hyperbolicity; symmetric differentials; double covers; genus 2 fibrations; Horikawa surfaces

\section{Introduction}\label{intro}

The $(K^2,\chi)$-geography of minimal surfaces of general type with maximal cotangent dimension, or in other words with big cotangent bundle, has been a topic of research since the seminal work of Bogomolov \cite{bogomolov_1979} where it was shown that a surface $X$ with maximal cotangent dimension has a proper subvariety $Z\subsetneq X$ containing all its rational and elliptic curves. 

\

This result is closely related to the Green-Griffiths-Lang conjecture.
\begin{conjecture}
Let $X$ be a projective variety of general type. Then there exists a proper subvariety $Z\subsetneq X$ such that any non-constant holomorphic map $f: \CC \to X$ has its image $f(\CC) \subset Z$.
\end{conjecture}

McQuillan \cite{mcquillan} extended Bogomolov's result to the analytic setting and showed that surfaces with maximal cotangent dimension satisfy the Green-Griffiths-Lang conjecture, i.e. the proper subset $Z\subsetneq  X$ contains also all entire curves of $X$ (i.e. non-constant holomorphic maps $f: \CC \to X$).

As a consequence of an easy Riemann-Roch computation, Bogomolov and McQuillan's results apply to surfaces with $c_1^2 > c_2$.

\

The works \cite {bogomolov_tensors}, \cite{sakai},\cite{bogomolov_nodes},\cite{roulleau2014},\cite{bruin2022explicit} and \cite{ADOWI} describe several conditions involving topological data, e.g. Chern numbers, singularity data, e.g. the singularities of the canonical model and existence of special fibrations that guarantee  maximal cotangent dimension for surfaces.  If we view these conditions from the $(K^2,\chi)$-geographic perspective, it stands out that they all necessarily fail to hold in the  region $K^2/\chi<2$. In section 1 we give a presentation of  the results, just mentioned, that are relevant to frame our work.

\ 

This work presents a framework and tools to determine whether a surface has maximal or minimal cotangent dimension (the latter corresponds to the absence of nontrivial symmetric differentials) that are, in particular, suitable for surfaces with low $K^2/\chi$. We apply our results to the class of (Noether) Horikawa surfaces ($K^2=2\chi-6$), which are the surfaces with the lowest $K^2/\chi$ slope for any fixed $\chi \geqslant 4$. 

\ 

In section 2, we present a filtration, concerning the order of the poles, for the symmetric power $S^m\Omega^1_X(\log D)$ where $X$ is a smooth projective variety and $D$ a simple normal crossing divisor and characterize the graded pieces of the filtration via higher degree residue exact sequences. A consequence of this approach is showing that $H^0(X,S^m\Omega^1_X(\log D))=0$ for $m>0$ if $X$ is a minimal rational surface and $D$ is an irreducible and smooth divisor, see \cref {cor:VanishingP2} and \cref{prop:VanishingFd1Comp}.

\ 

Minimal surfaces of general type with $K^2/\chi<2$, with the exception of surfaces in finitely many families having $\chi\leqslant 9$, are double covers of smooth (not necessarily minimal) rational surfaces \cite{beauville}.  In section 3, we obtain vanishing theorems for the symmetric differentials on double covers of minimal rational surfaces ($\PP^2$ and $\FF_d$, the results also include the case $d=1$ which is not minimal).

\

\begin{lettertheorem}\label{thm:A} (\cref{A}) Let \(X\) be a smooth double cover of \(\PP^2\) or \(\FF_d\), then $X$ will not have maximal cotangent dimension (i.e. $\Omega^1_X$ cannot be big). Moreover, unless the base of the double cover is $\FF_d$ and the branch locus is a union of fibers, then $X$ has minimal cotangent dimension, i.e. 
    \[H^0(X,S^m\Omega_X^1)=0,\ \ \forall m\geqslant 1.\]
\end{lettertheorem}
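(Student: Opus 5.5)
We reduce symmetric differentials on the double cover to symmetric logarithmic differentials (twisted by line bundles) on the base. Let $\pi\colon X\to Y$ be the double cover, with $Y=\PP^2$ or $\FF_d$, smooth branch divisor $B\in|2L|$ and involution $\iota$. Pushing forward decomposes $\pi_*S^m\Omega^1_X$ into $\iota$-invariant and anti-invariant parts. The standard local computation $w^2=f$, with $dw = df/(2w)$, identifies these parts with subsheaves of symmetric logarithmic differentials along $B$:
\[
\pi_*S^m\Omega^1_X \;\subset\; \bigoplus_{j} S^m\Omega^1_Y(\log B)\otimes \mathcal{O}_Y(-jL)\quad(\text{suitable } j\in\{0,1\}),
\]
with pole order along $B$ bounded in terms of the parity. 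It therefore suffices to prove two vanishings:
\[
H^0\big(Y,S^m\Omega^1_Y(\log B)\big)=0, \qquad H^0\big(Y,S^m\Omega^1_Y(\log B)\otimes\mathcal{O}_Y(-L)\big)=0 \quad\text{for } m\ge1,
\]
the second being weaker than the first because $L$ is effective.

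Since $B$ is smooth, it is a disjoint union of components. On $\PP^2$ it is irreducible, so \cref{cor:VanishingP2} applies directly. On $\FF_d$, a smooth branch curve in $|2L|$ is either irreducible, in which case \cref{prop:VanishingFd1Comp} applies, or decomposes into disjoint pieces. Disjointness forces these pieces to be fibers, or sections together with fibers, such as the negative section $C_0$ plus a curve disjoint from it. For the mixed configurations I would use the pole-order filtration of Section 2: the graded pieces are pushforwards from components of $B$ of symmetric powers of conormal data, and one restricts to a general fiber $F\cong\PP^1$ where $\Omega^1_Y(\log B)|_F$ splits. This shows that any section of $S^m\Omega^1_Y(\log B)$ is a pullback of a symmetric differential from the base $\PP^1$, logarithmic along the images of the fiber components. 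That space vanishes unless there are fiber components, which matches the stated exception.

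When $B$ is a union of fibers, $X$ is a fibration over a curve (a hyperelliptic-type double cover of $\PP^1$ by $\PP^1$ fibers), and differentials pulled back from the base curve exist. To show $\Omega^1_X$ is not big, restrict the same argument to a general fiber $F$ of the induced fibration $X\to C$. Here $F$ is a double cover of $\PP^1$, in fact $F\cong\PP^1$ since the fibers are unramified over the fibers of $\FF_d$. On $F$, $\Omega^1_X|_F \cong \mathcal{O}\oplus\mathcal{O}(-2)$, and sections of $S^m\Omega^1_X$ all come from $S^m$ of the conormal direction, i.e. from the base curve. Hence $h^0(S^m\Omega^1_X)$ grows at most like $m^2$, not $m^3$. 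For the other cases non-bigness follows trivially from vanishing.

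The main obstacle is the precise identification of the $\iota$-eigensheaves, in particular checking that the anti-invariant part lands in $S^m\Omega^1_Y(\log B)(-L)$ and not in something with positive twist along $B$. I would first verify this in local coordinates with $w^2=x$, checking which monomials $dx^a\,dw^b$ are invariant or anti-invariant.
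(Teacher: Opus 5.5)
Your eigensheaf identification is correct. With $w^2=x$, the invariant part of $\pi_*S^m\Omega^1_X$ lies in $S^m\Omega^1_Y(\log B)$ with pole order $\le\lfloor m/2\rfloor$. The anti-invariant part lies in $S^m\Omega^1_Y(\log B)\otimes\mathcal{O}_Y(-L)$ with pole order $\le\lceil m/2\rceil$. This is a clean alternative to the paper's device of passing to the invariant form $w\cdot\hat\imath^*w\in S^{2m}$. Combined with \cref{cor:VanishingP2} and \cref{prop:VanishingFd1Comp}, it settles irreducible $B$, and your union-of-fibers argument is the easy-addition argument the paper uses.

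The gap is the reducible, non-fiber case on $\FF_d$. First, your classification is off. A fiber meets every curve that is not a fiber, so if one component of a smooth $B$ is a fiber, all are; ``sections together with fibers'' never occurs. You also never exclude two disjoint components of positive self-intersection, which needs the Hodge index theorem. The only case left is $d\ge 1$ and $B=\Delta_0+D$ with $D$ irreducible and $D\cap\Delta_0=\emptyset$, hence $D\sim a\Delta$ and $D^2=a^2d>0$.

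More seriously, your argument for $H^0(\FF_d,S^m\Omega^1_{\FF_d}(\log B))=0$ in this case does not work. For a general fiber $F$ there is an extension $0\to\mathcal{O}_F\to\Omega^1_{\FF_d}(\log B)|_F\to\mathcal{O}_{\PP^1}(B\cdot F-2)\to 0$, where $B\cdot F=1+a\ge 2$ is even (it equals $6$ for Horikawa branch curves). So $S^m\Omega^1_{\FF_d}(\log B)|_F$ has at least $m+1$ sections, since $F\setminus B$ is $\CC^*$ or hyperbolic. Restriction to fibers therefore does not force a global section to come from $\PP^1$.

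The graded pieces of the pole-order filtration along $D$ are also not ``conormal data''. They are $S^{m-i}\Omega^1_{\FF_d}(\log\Delta_0)|_D$, which \cref{prop:VanishingFd2Comp} kills for $i<m$. The top piece $\mathcal{O}_D$ survives, so the filtration alone gives only $h^0\le 1$.

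You need one more input. One option is to keep your parity bound instead of discarding it:
\begin{itemize}
\item For $m\ge 2$, $\lceil m/2\rceil\le m-1$. Since $L$ is effective, both eigenparts then inject into $H^0(S^m\Omega^1_{\FF_d}(\log B)_{(m-1),D})$, which vanishes by \cref{prop:VanishingFd2Comp}.
\item For $m=1$, the residue sequence gives $H^0(\Omega^1_{\FF_d}(\log B))=0$, because $[\Delta_0]$ and $[D]$ are linearly independent in $H^1(\Omega^1_{\FF_d})$.
\end{itemize}
The other option is to redo the coboundary computation of \cref{lemma} with $D'=\Delta_0$. This goes through because $\Omega^1_{\FF_d}(\log\Delta_0)|_D=\Omega^1_{\FF_d}|_D$ and $D^2\neq 0$.

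The paper takes the first route via \cref{lemmavan} and \cref{cor:VanishingCover2}. There the pole bound $\le m$ in $S^{2m}$ is $\le 2m-1$ for every $m\ge 1$, so no separate $m=1$ case is needed.
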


\

In \cref{CMS} it is shown that there are smooth double covers of  non-minimal rational surfaces with maximal cotangent dimension (this result involves the CMS-bigness criterion  \cite{ADOWI} described in section 1). 

\

Horikawa surfaces can be viewed as minimal resolutions of double covers of $\PP^2$ or $\mathbb{F}_d$ branched along a possibly reducible simple
 curve (a curve with only simple singularities, i.e., of type $a$, $d$, or $e$). We  establish a vanishing theorem for symmetric differentials on a generic Horikawa surface.

\begin{lettertheorem}\label{thm:B} (\cref{generic}) The generic Horikawa surface $X$ has no nontrivial symmetric differentials, i.e., for all \(m\geqslant 1\),
$$H^0(X,S^m\Omega^1_X)=0.$$
\end{lettertheorem}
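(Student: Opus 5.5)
The plan is to reduce to Theorem~\ref{thm:A} by a deformation/semicontinuity argument. Horikawa surfaces with $K^2=2\chi-6$ are, by Horikawa's classification, minimal resolutions of double covers of $\PP^2$ (for $\chi=4$, branch a degree $8$ curve) or of $\FF_d$ with $d\in\{0,2,4,\dots\}$ bounded in terms of $\chi$. The branch curve $B$ lies in $|6\Delta_0+(2a)\Gamma|$ (or the analogous class) and has only simple singularities. Each connected family of Horikawa surfaces with fixed invariants contains members whose branch curve is \emph{smooth}, or at least is a limit of such. Indeed, the generic member of each irreducible component of the moduli space has a smooth branch curve on the corresponding $\FF_d$ (or $\PP^2$). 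In the cases where the generic branch curve is forced to contain the negative section $\Delta_\infty$ (e.g.\ $d$ large relative to $a$), the branch curve is $\Delta_\infty$ plus a smooth curve disjoint from it. This is still a smooth branch locus, so the double cover is smooth. So the first step is to check, case by case from Horikawa's list, that a generic member $X$ of each component is a smooth double cover of $\PP^2$ or $\FF_d$, and that its branch locus is not a union of fibers. The latter holds because $B$ always contains a horizontal component, as $B\cdot\Gamma=6$.

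The second step is then immediate: Theorem~\ref{thm:A} gives $H^0(X,S^m\Omega^1_X)=0$ for all $m\geqslant 1$ for such smooth double covers.

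The third step is to pass from these special smooth double covers to the ``generic Horikawa surface''. We use upper semicontinuity of $h^0(X_t,S^m\Omega^1_{X_t})$ in smooth families. The set where $h^0(S^m\Omega^1_{X_t})>0$ is closed for each $m$, and it misses the smooth double covers. Hence it is a proper closed subset of each component of the parameter space of branch curves, and the union over all $m$ is a countable union of proper closed subsets. Its complement is the very general locus, which is what ``generic'' means here. For surfaces whose canonical model has ADE singularities (branch curve with simple singularities), we use the fact that simultaneous resolution after base change gives a smooth family of minimal resolutions, so semicontinuity applies to the whole family of minimal models.

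The main obstacle is the first step: making sure every irreducible component of the moduli of Horikawa surfaces for each $\chi$ actually contains a member that is a smooth double cover of $\PP^2$ or some $\FF_d$, rather than only members whose branch curve has unavoidable singularities. I would go through Horikawa's description of the types of branch loci, e.g.\ $B\in|6\Delta_\infty+2k\Gamma|$ on $\FF_d$ with $k$ determined by $\chi$. For each type, I would exhibit a smooth member, either irreducible or of the form $\Delta_\infty+B'$ with $B'$ smooth and disjoint from $\Delta_\infty$, using base-point-freeness of the relevant linear system on $\FF_d$ and Bertini. If some component forces singular branch curves, I would instead try to prove the vanishing directly for that resolved double cover. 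This would adapt the proof of Theorem~\ref{thm:A}, using the logarithmic vanishing of Corollary~\ref{cor:VanishingP2} and Proposition~\ref{prop:VanishingFd1Comp} on the base, with the exceptional curves included in the boundary divisor.
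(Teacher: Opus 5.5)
Your argument has a gap, and it falls on exactly the case that makes the theorem non-trivial.

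\textbf{The false claim in your first step.} On $\FF_d$ the branch curve lies in $|6\Delta_0+(p_g+3d+2)F|$, so $B\cdot\Delta_0=p_g+2-3d$. When this is negative, $\Delta_0$ is a fixed component, and the residual curve $D=B-\Delta_0$ satisfies $D\cdot\Delta_0=p_g+2-2d$. This vanishes only in the extreme stratum $d=\frac{p_g}{2}+1$.

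Now take any stratum with $\frac{p_g+2}{3}<d<\frac{p_g}{2}+1$. Note that $d\equiv p_g \pmod 2$, so odd $d$ occur; for example $p_g=5$, $d=3$ gives $D\cdot\Delta_0=1$. In such a stratum every branch curve is $\Delta_0+D$ with $D$ meeting $\Delta_0$ transversally in $p_g+2-2d>0$ points. So the double cover always has $A_1$ points, and its minimal resolution is a smooth double cover of a \emph{non-minimal} rational surface. Theorem~\ref{thm:A} does not apply to it.

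\textbf{Why semicontinuity does not help.} Semicontinuity only spreads vanishing from a point to an open neighbourhood of it. Vanishing on the open strata therefore says nothing about the generic point of a non-open stratum. Moreover, no member of these strata comes from a smooth branch curve, so there is no point inside them where vanishing is known.

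The paper's ``generic'' means generic in every stratum ${\cal H}^{(d)}_{p_g}$. What you have actually proved is the weaker statement for generic points of the irreducible components of ${\cal H}_{p_g}$. For that weaker statement semicontinuity is not even needed, since Theorem~\ref{thm:A} applies on the whole open locus of smooth branch curves.

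\textbf{What is missing.} You anticipated that some stratum might force singular branch curves, and it does. What you need there is a direct vanishing for the minimal resolution of the double cover of $\FF_d$ branched along $\Delta_0+D$, with $D$ smooth and transverse to $\Delta_0$. This is Theorem~\ref{thm:C} (Corollary~\ref{cor:VanishingCover2}).

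Your fallback does not supply it. The exceptional curves of the canonical resolution are not part of the branch locus. The one-component results, Corollary~\ref{cor:VanishingP2} and Proposition~\ref{prop:VanishingFd1Comp}, do not apply to the reducible divisor $\Delta_0+D$. For that divisor one only gets $h^0(\FF_d,S^{2m}\Omega^1_{\FF_d}(\log(\Delta_0+D)))\leqslant 1$, not $0$.

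The paper's argument has two pieces.
\begin{enumerate}
\item Lemma~\ref{lemmavan}: for $w\in H^0(X,S^m\Omega^1_X)$, the invariant form $w\otimes\hat\imath^*w$ descends, away from the nodes, to a symmetric logarithmic differential on $\FF_d$. Its pole order along each branch component is at most $m$. It then extends across the nodes by Hartogs.
\item Proposition~\ref{prop:VanishingFd2Comp} shows that $H^0(\FF_d,S^{2m}\Omega^1_{\FF_d}(\log(\Delta_0+D))_{(2m-1),D})=0$. It combines the higher residue sequences and pole-order filtration of Corollary~\ref{cor:VanishingLogGeneral}, the vanishing for $\log\Delta_0$, and the negativity of $\Omega^1_{\FF_d}(\log\Delta_0)|_D$, which is an extension of $\mathcal O_D(-\Delta)$ by $\mathcal O_D(-2F)$.
\end{enumerate}
Since $m\leqslant 2m-1$, the descended form lies in this filtration piece and therefore vanishes. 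The key ingredient is this pole-order bound, not a plain logarithmic vanishing.
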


\ 

The meaning of generic in the above theorem is stronger than the Horikawa surface corresponding to a generic point in moduli space ${\cal H}_{p_g}$ of all Horikawa surfaces with a fixed $p_g$. It means that the Horikawa surface corresponds to a generic point of any of the strata of ${\cal H}_{p_g}^{(d)}\subset {\cal H}_{p_g}$ parameterizing all Horikawa surfaces that are  minimal resolutions of double covers of $\mathbb{F}_d$ if $d\in \ZZ_\ge 0$ and of $\PP^2$ if $d=\infty$ branched along a simple curve (\cite{Horikawa1976I}). To accomplish the above vanishing one needs  to consider, in addition to smooth double covers of $\PP^2$ and $\FF_d$ covered by theorem A, minimal resolutions of singular double covers of $\FF_d$ whose branch locus is singular with two smooth irreducible components intersecting transversely (via the canonical resolution process of Horikawa these are smooth double covers of non-minimal rational surfaces).

\ 

\begin{lettertheorem}\label{thm:C} (\cref{cor:VanishingCover2}) Let \(X\) be the minimal resolution of the  double cover of \(\FF_d\) branched along a simple normal crossing divisor of the form \(\Delta_0+D\), where \(\Delta_0\) is a section with self intersection $-d$ (unique if $d>0$) and \(D\) is an irreducible smooth divisor. Then 
    \[H^0(X,S^m\Omega_X^1)=0,\ \ \forall m\geqslant 1.\]
\end{lettertheorem}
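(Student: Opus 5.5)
We will reduce the statement to a vanishing of symmetric logarithmic differentials on a blow-up of \(\FF_d\), following the strategy that proves Theorem~\ref{thm:A}.

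\textbf{Reduction to a smooth double cover.} Let \(B=\Delta_0+D\). Since \(B\) is simple normal crossing, its only singularities are the transverse points \(p_1,\dots,p_k\) of \(\Delta_0\cap D\). Horikawa's canonical resolution blows up these points, giving \(\sigma:Y\to\FF_d\) with exceptional curves \(E_i\). The new branch divisor is the strict transform \(\tilde B=\tilde\Delta_0+\tilde D\), which is now smooth and disconnected (each \(E_i\) enters with even multiplicity \(2\)). The double cover \(\pi:X'\to Y\) branched along \(\tilde B\) is smooth, and \(X\) is obtained from \(X'\) by contracting the \((-1)\)-curves \(\pi^{-1}(E_i)\). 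Symmetric differentials extend across points on smooth surfaces, so \(H^0(X,S^m\Omega^1_X)\) embeds in \(H^0(X',S^m\Omega^1_{X'})\). It therefore suffices to prove the vanishing on \(X'\).

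\textbf{Decomposition under the involution.} Let \(\iota\) be the covering involution and split \(\pi_*S^m\Omega^1_{X'}\) into \(\iota\)-eigensheaves. Near the ramification divisor, take local coordinates \(w^2=f\). Then invariant symmetric differentials are exactly the sections of \(S^m\Omega^1_Y(\log\tilde B)\) with the appropriate parity condition on the pole orders. Anti-invariant ones are sections of \(S^m\Omega^1_Y(\log\tilde B)\otimes L^{-1}\), where \(2L=\tilde B\), again up to a parity condition. Both eigensheaves therefore inject into \(S^m\Omega^1_Y(\log \tilde B)\). It is thus enough to show that
\[
H^0\big(Y,S^m\Omega^1_Y(\log(\tilde\Delta_0+\tilde D))\big)=0\qquad\text{for } m\ge 1.
\]

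\textbf{Vanishing of the logarithmic differentials.} We push forward to \(\FF_d\). Off the points \(p_i\), \(\sigma\) is an isomorphism, so any such section gives a section of \(S^m\Omega^1_{\FF_d}(\log(\Delta_0+D))\) on \(\FF_d\setminus\{p_i\}\). This extends over the points by reflexivity. Moreover, the logarithmic pole along \(E_i\) is absent, so these sections satisfy extra vanishing conditions at the \(p_i\). We then use the pole-order filtration and the higher residue exact sequences of section 2, applied first along \(\Delta_0\):
\[
0\to S^m\Omega^1_{\FF_d}(\log D)\to S^m\Omega^1_{\FF_d}(\log(\Delta_0+D))\to\cdots
\]
The graded pieces restricted to \(\Delta_0\cong\PP^1\) are sums of line bundles whose degrees are governed by \(\Delta_0^2=-d\) and \(D\cdot\Delta_0=k\). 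The constraint that the residue must vanish to high order at the \(p_i\), which comes from the absence of the \(E_i\) pole, should force the residue sections to have negative degree on \(\PP^1\), hence to vanish. We then conclude with \cref{prop:VanishingFd1Comp}, which gives \(H^0(\FF_d,S^m\Omega^1(\log D))=0\) for irreducible smooth \(D\).

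\textbf{Main obstacle.} The hard step is the residue-degree computation. The residues along \(\Delta_0\) naively lie in bundles of degree roughly \(jk-\dots\) on \(\PP^1\), and these can be positive when \(D\) meets \(\Delta_0\) in many points. One must carefully use the local structure at each \(p_i\) to lower these degrees enough; I expect the precise vanishing order imposed by the blow-up to be exactly what is needed, by analogy with the single-component case.
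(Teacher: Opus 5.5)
The gap is in your last step, and the residue computation you call the main obstacle does not work as set up. Because $\Delta_0$ is a section with $\Delta_0^2=-d$, one has $\Omega^1_{\FF_d}|_{\Delta_0}\simeq\mathcal{O}_{\PP^1}(-2)\oplus\mathcal{O}_{\PP^1}(d)$. The positive summand is the conormal bundle, spanned by $dx$ if $\Delta_0=\{x=0\}$. So the $j$-th graded piece $S^{m-j}\Omega^1_{\FF_d}(\log D)|_{\Delta_0}$ contains $\mathcal{O}_{\PP^1}(d(m-j))$.

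The conditions at the $p_i$ coming from the absence of a pole along $E_i$ are far too weak to offset this at low pole order. Take $\Delta_0=\{x=0\}$ and $D=\{y=0\}$ near $p_i$. The form $x^{-j}y^j(dx)^m=(dx/x)^j\,y^j(dx)^{m-j}$ has pole order exactly $j$ along $\Delta_0$, with residue $y^j(dx)^{m-j}|_{\Delta_0}$. Yet on $Y$ it equals $u^{-j}(u\,dy+y\,du)^m$ in the chart $x=uy$ and $v^j(dx)^m$ in the chart $y=vx$. It therefore has only logarithmic poles along $\tilde\Delta_0$ and no pole along $E_i$. In this direction the local conditions impose at most vanishing of order $j$ at each of the $k=D\cdot\Delta_0$ points. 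For $d\geq 1$, $j=1$ and $d(m-1)\geq k$, nonzero admissible residues remain. Your bound by the sum of the graded pieces then gives nothing, because the hoped-for negativity is not there.

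The missing idea has two parts. First, keep the pole-order information you discard when you let the eigensheaves inject into $S^m\Omega^1_Y(\log\tilde B)$. Second, filter along $D$ instead of $\Delta_0$.

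In Lemma~\ref{lemmavan} the paper takes the invariant form $w\otimes\hat\imath^*w$. It descends, by Hartogs directly on $\FF_d$ with no blow-up needed, to a section of $S^{2m}\Omega^1_{\FF_d}(\log(\Delta_0+D))$ with pole order at most $m\leq 2m-1$ along $D$. Proposition~\ref{prop:VanishingFd2Comp} shows that the $(2m-1)$-th piece of the filtration along $D$ has no sections:
\begin{itemize}
\item the base term $H^0(S^{2m}\Omega^1_{\FF_d}(\log\Delta_0))$ vanishes by the rational-curve argument of Theorem~\ref{prop:VanishingFd1Comp};
\item the graded pieces $S^{2m-i}\Omega^1_{\FF_d}(\log\Delta_0)|_D$, $1\leq i<2m$, have no sections, since $\Omega^1_{\FF_d}(\log\Delta_0)|_D$ is an extension of $\mathcal{O}_D(-\Delta)$ by $\mathcal{O}_D(-2F)$, both negative on $D$.
\end{itemize}
Only the top piece $\mathcal{O}_D$ escapes this, and the double-cover pole bound never reaches it.

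If you want your stronger claim that all such log forms vanish, you can kill the top residue with the coboundary computation of Lemma~\ref{lemma} for $D'=\Delta_0$. The obstruction is $m$ times the image of $c_1(D)=a[\Delta_0]+b[F]$, that is, $mb$ times the image of $[F]$ in $H^1(D,\Omega^1_{\FF_d}(\log\Delta_0)|_D)$. This is nonzero: $H^1(D,\mathcal{O}_D(-2F))$ injects there because $D\cdot\Delta=b>0$, and $[F]$ maps to $D\cdot F\neq 0$ in $H^1(D,\Omega^1_D)$.

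Two smaller slips:
\begin{itemize}
\item The nodes of $\Delta_0+D$ give $A_1$-points, so the curves $\pi^{-1}(E_i)$ are $(-2)$-curves and your $X'$ already is $X$; nothing is contracted.
\item Embedding the anti-invariant part, which is twisted by $L^{-1}$, into $S^m\Omega^1_Y(\log\tilde B)$ requires a section of $L$. Squaring, or using $w\otimes\hat\imath^*w$, avoids this.
\end{itemize}
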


\  

\medskip

In section 4, we investigate the existence of minimal surfaces of general type having maximal cotangent dimension in the $(K^2,\chi)$-region, $K^2/\chi<2$, where the known results fail to show the existence of nontrivial symmetric differentials. Our approach moves from viewing surfaces as covers to viewing them as fibrations. The result that allows us to reach the region of $K^2/\chi<2$ where the base of the fibration is necessarily $\PP^1$ is the following orbifold generalization of Sakai's criterion stating that a genus-2 fibration over a curve of genus at least $2$ has big cotangent bundle.

\begin{letterproposition}\label{prop:A}(\cref{orbibig}) Let $f: X \to C$ be a fibration with non-classical orbifold general type base on a projective surface $X$ with general fiber of genus $\geqslant 2$. Then $X$ has big cotangent bundle $\Omega^1_X$.
\end{letterproposition}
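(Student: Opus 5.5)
We follow the strategy of Sakai's criterion and replace the base curve of genus $\geqslant 2$ by the orbifold base $(C,\Delta_f)$, where $\Delta_f=\sum_p (1-\frac{1}{m_p})\,p$ and $m_p$ is the (classical, i.e. gcd) multiplicity of the fibre $f^*p$. By hypothesis $\deg(K_C+\Delta_f)>0$. The bigness of $\Omega^1_X$ will come from two sources of symmetric differentials, which we then combine. The first source is pullbacks of orbifold differentials from the base. The second is the relative cotangent directions along the fibres, which have genus $\geqslant 2$.

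\textbf{Step 1 (orbifold pullbacks).} For $m$ divisible enough, a section of $\mathcal{O}_C(m(K_C+\Delta_f))$ pulls back to a holomorphic section of $S^m\Omega^1_X$. Near a multiple fibre we can write $f=u\,t^{m_p}$ locally, so $f^*dz=d(u t^{m_p})$ is divisible by $t^{m_p-1}$. A pole of order $\le m(1-1/m_p)$ at $p$ is therefore cancelled by these zeros of $(f^*dz)^m$. This gives an injection $f^*\mathcal{O}_C(\lfloor m(K_C+\Delta_f)\rfloor)\hookrightarrow S^m\Omega^1_X$, factoring through $S^m L$, where $L\subset\Omega^1_X$ is the saturation of $f^*\Omega^1_C$. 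In particular $L\supseteq f^*(K_C+\Delta_f)$ as $\mathbb{Q}$-divisors, so $L\cdot F=0$ and $L$ has positive degree on the base.

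\textbf{Step 2 (bigness via the quotient).} Let $Q=\Omega^1_X/L$, which is a rank one torsion-free sheaf, equal to $\omega_{X/C}$ away from singular fibres, twisted by a divisor supported on fibres. The first plan is the tautological line bundle on $\PP(\Omega^1_X)$, which has the subvariety $\PP(Q)\subset \PP(\Omega^1_X)$. Instead, we would rather use Bogomolov's filtration argument: $S^m\Omega^1_X$ has a filtration with graded pieces $L^{i}\otimes Q^{m-i}$. It then suffices to show that $h^0(X, L^{i}Q^{m-i})$ summed over $i$ grows like $m^3$. Since the argument only needs lower bounds, we instead work with the lower-dimensional pieces, and the standard reduction is as follows. $\Omega^1_X$ is big if and only if $\mathcal{O}_{\PP(\Omega^1_X)}(1)$ is big, and this holds as soon as $\mathcal{O}(1)$ restricted to the divisor $\PP(Q)$ is big and $\mathcal{O}(1)\otimes\mathcal{O}(-\PP(Q))=\pi^*L$ is effective in multiples. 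The latter holds by Step 1, since $\pi^*L^{\otimes m}$ has sections, and Step 1 gives the $\mathbb{Q}$-effectivity. More cleanly, we write $\mathcal{O}(1)\sim_{\mathbb{Q}} \pi^*L+[\PP(Q)]$ and show that this sum of an effective divisor with nonzero sections and a divisor on which $\mathcal{O}(1)$ is big is big. Here $\mathcal{O}(1)|_{\PP(Q)}\cong Q$ on $X$. It therefore suffices that $Q+\epsilon f^*(\text{pt})$ be big on $X$ for some small $\epsilon$, with the base contribution supplied by Step 1.

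\textbf{Step 3 (bigness of $Q$ plus base).} $Q$ agrees with $K_{X/C}$ up to fibre components, and $K_{X/C}$ is nef by Arakelov. Moreover $K_{X/C}$ is positive on fibres, since $\deg K_F=2g-2>0$ for $g\geqslant 2$. Thus $K_{X/C}+\epsilon f^*(K_C+\Delta_f)$ is nef with positive self-intersection $K_{X/C}^2+2\epsilon(2g-2)\deg(K_C+\Delta_f)>0$, hence big. The genuine $\mathbb{P}(\Omega)$-level statement is then $\mathcal{O}(1)=\pi^*L+\PP(Q)$ with $\mathcal{O}(1)|_{\PP(Q)}$ big on $\PP(Q)\cong X$ (birationally). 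Combining an effective divisor whose restriction to itself is big with a movable positive part gives bigness by the usual inequality $h^0(mD)\ge h^0(m(D-E))+\dots$, or by a volume computation $(\pi^*L+E)^3>0$ with nefness handled via Zariski decomposition.

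\textbf{Main obstacle.} The delicate point is the comparison $Q$ versus $K_{X/C}$ along singular and multiple fibres: the saturation $L$ contains $f^*(K_C+\Delta_f)$ plus fibre components, and $Q$ loses exactly those. We must check that the negative fibre-supported corrections do not destroy bigness. This is where the hypotheses "non-classical" multiplicities and genus $\ge 2$ are used. We would control this by computing $L$ locally at each fibre via $f^*dz$ and using Zariski's lemma, under which fibre components have negative semidefinite intersection form, to absorb the corrections.
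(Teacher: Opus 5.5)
The genuine gap is in Steps 2--3: nothing there proves that $\mathcal{O}_{\PP(\Omega^1_X)}(1)$ is big.

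\begin{itemize}
\item The filtration of $S^m\Omega^1_X$ by the pieces $L^i\otimes Q^{m-i}$ only bounds $h^0$ from above, so it cannot give the lower bound you need.
\item Your criterion, that $\mathcal{O}(1)$ is big whenever $\mathcal{O}(1)|_{\PP(Q)}$ is big and $\pi^*L$ is $\mathbb{Q}$-effective, is not a standard fact. Lifting sections from the divisor $\PP(Q)$ is obstructed by $H^1(\mathcal{O}(m)\otimes\mathcal{O}(-\PP(Q)))$.
\item A volume argument would need nefness, and nefness fails: on every smooth rational component $R$ of a singular fibre, $\Omega^1_X|_R$ has the quotient $\mathcal{O}_{\PP^1}(-2)$. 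Zariski decompositions with nef positive part are not available on threefolds in general. Note also that $\mathcal{O}(1)^3=c_1^2-c_2$ is negative for the low-slope surfaces this proposition targets; it equals $-40k-4$ for the $X_k$ of Theorem \ref{max}.
\item Moreover $\mathcal{O}(1)|_{\PP(Q)}\not\cong Q$. The saturation $L$ fails to be a subbundle at every singular point of $(F_p)_{\mathrm{red}}$: where two components of multiplicities $a,b$ meet, $f=z^aw^b$ and $L$ is generated by $aw\,dz+bz\,dw$, which vanishes at the point. So $\PP(Q)$ is the blow-up $\sigma$ of $X$ at these points, and $\mathcal{O}(1)|_{\PP(Q)}=\sigma^*c_1(Q)-\sum E_x$. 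Bigness of $Q+\epsilon f^*(\mathrm{pt})$ on $X$ would not give bigness of this class, and the term $\epsilon f^*(\mathrm{pt})$ does not occur in it anyway.
\end{itemize}
Your \emph{main obstacle} is left open, and it is not the real issue.

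The missing idea is to use the fibration $h=f\circ\pi\colon \PP(\Omega^1_X)\to C$, i.e.\ Sakai's lemma (Lemma \ref{fib2}); this is how the paper concludes. Step 1 gives $h^*A\hookrightarrow\mathcal{O}(\ell)$ with $A=\mathcal{O}_C(\ell(K_C+\Delta(f)))$ ample on $C$. So it suffices that $\mathcal{O}(1)$ be big on a \emph{general} fibre $\PP(\Omega^1_X|_{X_c})$. Indeed, choose $H$ ample on $\PP(\Omega^1_X)$ and $k$ with $h_*(\mathcal{O}(k)\otimes H^{-1})\neq 0$. For $N\gg0$ the sheaf $h_*(\mathcal{O}(k)\otimes H^{-1})\otimes A^{N}$ has sections, hence $\mathcal{O}(k+N\ell)\otimes H^{-1}$ is effective.

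On a general fibre, the sequence $0\to\mathcal{O}_{X_c}\to\Omega^1_X|_{X_c}\to K_{X_c}\to 0$ shows that $\mathcal{O}(1)$ is nef with self-intersection $2g-2>0$ on this ruled surface. Alternatively, $\PP(Q)$ cuts the fibre in a section of positive self-intersection; this is where your divisor argument is legitimate, one dimension down. Singular fibres never enter, so no comparison of $Q$ with $K_{X/C}$ is needed.

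Finally, Step 1 is the paper's key input, but it must be run with the non-classical multiplicity $m(F_p)=\min_j n_{p,j}$, not the gcd. With the gcd you are assuming the stronger hypothesis of Proposition \ref{orbsakai}, and for genus-$2$ Campana fibres the gcd is $1$. The computation goes through unchanged with the minimum. At a general point of each component one has $f^*t=uz^{n_{p,j}}$ with $n_{p,j}\geqslant m(F_p)$. The morphism then extends over the remaining finitely many points because both sheaves are locally free.
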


\

All Horikawa surfaces, with the exception of the members of the two families in ${\cal H}_{3}$ and ${\cal H}_{6}$ consisting of surfaces having $\PP^2$ as their canonical image, have genus-2 fibrations. Stoppino in \cite{Stoppino} produced examples of surfaces with genus-2 fibrations having  non-classical orbifold general type base, but these examples lie in the region $K^2\geqslant 2\chi$.     The non-classical orbifold structure of the base comes from the presence of  enough Campana multiple fibers (fibers whose non-classical multiplicity, i.e. the minimum of the multiplicity of its irreducible components, is greater than 1). There are four types of Campana multiple fibers of genus 2 appearing in the classification of genus 2 fibers by Ogg and Namikawa-Ueno,\cite{ogg} or \cite{nu}. Horikawa \cite{genus2} studied the impact of singular fibers of genus 2 in a fibration of genus 2 on the relationship between $K^2$ and $\chi$, $K^2=2\chi-6+\sum_{F_i} H(F_i)$ where $F_i$ are the singular fibers of the fibration and $H(F_i)\ge 0$ is the Horikawa invariant associated with a fiber of genus 2. 

\ 

We show that only one of the four types of Campana multiple fibers can occur in a Horikawa surface (i.e. it has vanishing Horikawa invariant) and called it the Campana multiple fiber of type $(0)$. We construct Horikawa surfaces with Campana multiple fibers of type $(0)$ as minimal resolutions of double covers of Hirzebruch surfaces whose branch locus is a simple curve whose set of singularities have the special configuration associated with the fibers of type $(0)$.  We then obtain  the existence of special Horikawa surfaces with maximal cotangent dimension.

\

\begin{lettertheorem}\label{thm:D} (\cref{max}) There exist Horikawa surfaces $X$ with big cotangent bundle $\Omega^1_X$.
\end{lettertheorem}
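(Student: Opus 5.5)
The plan is to apply Proposition~\ref{prop:A} to a suitably constructed Horikawa surface carrying a genus-2 fibration over $\PP^1$ with enough Campana multiple fibers of type $(0)$. Then the orbifold base $(\PP^1,\Delta)$ is of general type, i.e.\ $\deg(K_{\PP^1}+\Delta)>0$, and the proposition gives bigness of $\Omega^1_X$. Here $\Delta=\sum_i(1-1/m_i)p_i$, where $m_i$ is the non-classical multiplicity (minimum of the component multiplicities) of the fiber over $p_i$. For instance five fibers of multiplicity $2$ suffice, since $-2+5/2>0$; more generally any configuration with $\sum_i(1-1/m_i)>2$ works.

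\textbf{Step 1: local model.} I would identify the local model of a type $(0)$ fiber. From Horikawa's formula $K^2=2\chi-6+\sum H(F_i)$, a Horikawa surface has all $H(F_i)=0$, so only the Campana multiple fiber type with vanishing Horikawa invariant can occur. I expect this to be a genus-2 fiber whose components all have multiplicity at least $2$, e.g.\ a double elliptic curve or a double rational curve with attached chains, arising from the canonical resolution. In the double cover description $X\to\FF_d$ with the genus-2 fibration induced by the ruling $\FF_d\to\PP^1$, such a fiber appears over a ruling line $\ell$ when the branch curve $B\in|6\Delta_0+(\dots)\ell|$ contains $\ell$, or is tangent to $\ell$ in a prescribed way, with only simple singularities. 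The invariant $H=0$ is exactly the condition that the singularities are simple. I would then check via canonical resolution that the pulled-back fiber has every component of even multiplicity, which makes the non-classical multiplicity $2$.

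\textbf{Step 2: global construction.} I would construct a reduced branch curve $B\in|6\Delta_0+2k\ell|$ on $\FF_d$ (say $d=0$ or small $d$) containing at least five fibers $\ell_1,\dots,\ell_5$ with the required local configuration. A natural choice is $B=\ell_1+\dots+\ell_{2r}+B'$ with $2r\ge 6$ and $B'$ a general member of $|6\Delta_0+(2k-2r)\ell|$, adjusted so that $B'$ meets each $\ell_i$ in the special configuration of type $(0)$. The singularities must remain simple: $B'$ meets $\ell_i$ with tangency of bounded order, giving $a_n$ or $d_n$ points. I would then verify that the minimal resolution of the double cover is a minimal surface with $K^2=2\chi-6$, using Horikawa's classification: a simple branch curve in this linear system gives a Horikawa surface.

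\textbf{Main obstacle.} The hard part is Step 1 together with the compatibility in Step 2. The difficulty is ensuring that the fiber over $\ell_i$ is genuinely non-reduced in every component after resolution, while the singularities of $B$ stay simple, so that the surface stays Horikawa rather than acquiring $H>0$. A reduced $\ell_i$ alone in the branch locus gives a reduced fiber component. So I would try instead to impose that $B'$ is tangent to $\ell_i$ to even order at each intersection point, e.g.\ three points of contact of order $2$, making $B|_{\ell_i}$ a divisor divisible by $2$. The preimage of $\ell_i$ then splits or becomes a double curve. Combined with $\ell_i\subset B$, this should yield $a_3$-type points and a fiber of the form $2E+\dots$. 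Finally I would count parameters to confirm that such $B'$ exist, with $k$ large enough, for at least five lines simultaneously, and that $B'$ can be chosen otherwise smooth.
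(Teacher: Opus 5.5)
Your overall strategy matches the paper's. You realize a Horikawa surface as the minimal resolution of a double cover of some $\FF_d$ branched along an even sextic with only simple singularities. You arrange at least five Campana multiple fibers in the induced genus-2 fibration, and you conclude with \cref{orbibig}.

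\textbf{First gap: the local model.} The local model carries the content of Step 1, and it is never identified; what you propose instead cannot work.
Your verification target, that every component of the fiber have even multiplicity, is impossible. It would make the fiber a classical multiple fiber $F=2F_0$. Then $K_X\cdot F_0=1$ and $F_0^2=0$, so $2p_a(F_0)-2=1$, which violates adjunction. This also excludes a ``double elliptic curve''. Only the minimum multiplicity needs to be $2$, and the relevant fiber (Ogg's type (29)) has multiplicities $2,3,4,5,6$.

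Your concrete candidate, $\ell_i\subset B$ with $B'$ tangent to $\ell_i$ at three points, also fails. At each resulting $a_3$ point, the first exceptional curve enters the total transform of $\ell_i$ with multiplicity $1$. It is not in the new branch locus, and it becomes disjoint from the branch locus after the second blow-up. So it lifts to two $(-2)$-curves of multiplicity $1$. The fiber is $2R+\sum_{j=1}^{3}(E_{a,j}+2E'_j+E_{b,j})$, which is not Campana multiple.

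The correct analysis runs the other way. Every multiplicity-one component of the total transform of the fiber $\Gamma$ must lie in the branch locus. This forces $\Gamma\subset B$, and it forces odd, hence triple, multiplicity of $B$ at every point of $(B-\Gamma)\cap\Gamma$. Points of type $d_n$ again create multiplicity-one exceptional curves off the branch locus. What survives is exactly the paper's model: $B-\Gamma$ has two cusps on $\Gamma$, with $\Gamma$ as the cuspidal tangent of each. These are two $e_7$ points, locally $t(x^3-t^2z^3)((x-z)^3-t^2z^3)=0$.

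\textbf{Second gap: existence.} You leave to an unperformed parameter count the construction of a simple branch curve realizing this configuration on five fibers at once. With ten prescribed $e_7$ points, that count is the delicate part: you would need the expected dimension and no worse singularities elsewhere.

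The paper avoids it with an explicit curve and a replication trick. The cuspidal cubics $z_0^3+z_1^2z_2=0$ and $(z_0-z_2)^3+z_1^2z_2=0$ have their cusps on $\ell_0=\{z_1=0\}$, which is the cuspidal tangent of both. Adding $\ell_0$ and a general line $\ell_1$ gives a simple curve. Blowing up $\ell_0\cap\ell_1$ yields $B\in|6\Delta_0+8F|$ on $\FF_1$ with one type-$(0)$ fiber.

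Base change along $\pi_k([x_0:x_1])=[x_0^k:x_1^k]$, branched over two points with smooth fibers, gives $B_k\in|6\Delta_0'+8kF|$ on $\FF_k$. All singularities of $B$ lie over the \'etale locus, so $B_k$ is still simple and has $k$ fibers of type $(0)$. Hence $k\geq 5$ works, with $p_g=5k-2$.

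Without a correct local model and an existence mechanism of this kind, the proposal does not prove the statement.
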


Our approach to the bigness of the cotangent bundle of Horikawa surfaces via the orbifold generalization of Sakai's criterion also requires the presence of singularities in their canonical model  (at least ten $E_7$ singularities). The difference and strength of our approach compared to previous criteria, e.g. the CMS-criterion, is that our approach takes into account the global geometry of the set of singularities and not just the set of singularities. 

\

In our examples of Horikawa surfaces $X$ with big cotangent bundle the degeneracy locus $Z\subset X$, i.e. the Zariski closure of the union of all the entire curves of $X$,  described in the Green-Griffiths-Lang conjecture,  is understood. It follows from \cref{locus} for relatively minimal genus 2 fibrations that $Z$ consists of the union of all the singular fibers of the genus 2 fibration on $X$ having a base of non-classical orbifold general type.

\ 

The authors learned many of the ideas underlying this work from Fedya. The second author had the privilege and great pleasure of having Fedya as his advisor and collaborator. We wish him all the best and express our heartfelt gratitude to him.

\

\section{Background and the geography of surfaces with big cotangent bundle}\label{bg}

\ 

\subsection{Cotangent dimension} \label{cd}

\

Let $X$ be a compact complex manifold and $E$ a vector bundle on $X$ of rank $r$.
Let $\pi\colon \mathbb{P}(E)\to X$ be the associated $\mathbb{P}^{r-1}$-bundle,
with $\mathcal{O}_{\mathbb{P}(E)}(1)$ its tautological line bundle. The vector
bundle $E$ is said to be ample, big, nef, \emph{etc.} if
$\mathcal{O}_{\mathbb{P}(E)}(1)$ is respectively ample, big, nef, \emph{etc.}

\

\begin{definition} The symmetric dimension of a vector bundle $E$  is:
\vspace{-.2in}

\begin{align*} \lambda(X,E):=k(\PP(E),\mathcal{O}_{\PP(E)}(1))-(r-1)
    \end{align*}
    
\end{definition}

\noindent where $k(Y,L)$ denotes the Iitaka dimension of the line bundle $L$ on the projective variety $Y$.              

\begin{remark} $\lambda(X,E)=-\infty,-\text{rank} E+1,-\text{rank} E+2,...,\dim X-1,\dim X$. The above symmetric dimension of $E$ differs from the $E-$dimension of Sakai \cite {sakai} just in the case $\lambda(X,E)=-\infty$ which has $E-$dimension= -rank$E$ (the notion of symmetric dimension as above  appeared for example in \cite{Manivel1995}).
\end{remark}

\begin{definition} The cotangent dimension of a smooth projective variety $X$ is $\lambda_\Omega(X):=\lambda(X,\Omega^1_X)$.
\end{definition}

We are interested in the extremal cases: (minimal) $\lambda_\Omega(X)=-\infty$ where $X$ has no nontrivial symmetric  differentials; (maximal) $\lambda_\Omega(X)=\dim X$, where $X$ has maximal abundance of symmetric differentials in the sense that $\Omega^1_X$ is big.

\

 \

The following are properties for the cotangent dimension we will use throughout the paper, a possible reference is \cite {sakai}.
 
 \

\noindent (Birational invariance) If $X$ and $Y$ are two birational smooth projective varieties, then $\lambda_\Omega(X)=\lambda_\Omega(Y)$. In fact, the symmetric plurigenera $P^S_m(X):=h^0(X,S^{m}\Omega^1_X)$ are birational invariants (among smooth varieties).

\begin{remark} If one considers also  normal singular models $X$ of a birational class $\mathcal{X}$, then  symmetric plurigenera is no longer a birational invariant, where the plurigenera of $X$ is defined as $P^S_m(X):=h^0(X,S^{[m]}\Omega^1_X)$ with $S^{[m]}$ the reflexive symmetric power. For example, if a surface $X$ has   log-terminal or even canonical singularities (mild singularities), then $P^S_m(X)=h^0(\tilde X\setminus E, S^m\Omega^1_{\tilde X})=h^0(\tilde X,S^m\Omega^1_{\tilde{X}}(log E))$ with $\tilde X$ the minimal resolution of $X$ and $E$ the exceptional locus, \cite{miyaoka1984maximal}, which can be distinct from $P_m^S(\tilde X)$.
    \end{remark}

\noindent (\'Etale invariance) If $X$ is an \'etale cover of a smooth projective variety $Y$, then $\lambda_\Omega(X)=\lambda_\Omega(Y)$.

\

\noindent (Maximal cotangent dimension under equidimensional dominant maps) Let $f:X\to Y$ be a surjective equidimensional map between smooth projective varieties. If $\lambda_\Omega(Y)=\dim Y$, then $\lambda_\Omega(X)=\dim X$.

\ 

\noindent (Easy addition formula) Let $f:X \to Y$ be a surjective connected morphism between smooth projective varieties with $X_g$ the general fiber, then $\lambda_\Omega(X)\leqslant \lambda(X_g,\Omega_X^1|_{X_g})+\dim Y$.

\ 

\subsection{Surfaces with maximal cotangent dimension}

 \

 \

 The canonical volume $ K^2$ of a minimal surface $X$ of general type satisfies the inequalities
 $2\chi-6\leqslant K^2\leqslant 9\chi$, where the lower bound is the Noether's inequality and the upper bound is the Bogomolov-Miyaoka-Yau's inequality. While surfaces with large slope, $K^2/\chi>6$ ($c_1^2>c_2)$ are known to have maximal cotangent dimension \cite{bogomolov_tensors}, surfaces with low slope $K^2/\chi$, say less than 2, are expected to have minimal cotangent dimension.

 \

If $X$ is a smooth projective surface, $X$ having maximal cotangent dimension can be also described by:

 $$h^0_\Omega(X):=\lim_{m \rightarrow \infty} \dfrac{h^0(X, S^m \Omega_X^1)}{m^{3}} \neq 0$$
	
	\noindent i.e. the symmetric pluri-genera $P^S_m(X):=h^0(X, S^m \Omega_X^1)$ have the maximal growth order possible with respect to $m$. The above limit exists  (no need to use limsup). This is due to  $h^0(X, S^m\Omega_X^1)=h^0(\mathbb P(\Omega^1_X),\mathcal{O}(m))$ and the volume of  a line bundle on a  projective variety being given by a regular limit  (\cite{Lazarsfeld2004PositivityIA} 11.4.7).

\

The Hirzebruch--Riemann--Roch theorem for symmetric powers of $\Omega_X^1$
and Bogomolov's vanishing for surfaces of general type
(\cite{bogomolov_stability}) 
\[
H^2\bigl(X,S^m\Omega_X^1\bigr)=0
\qquad \text{for } m \geqslant 3,
\]
give the following asymptotic statement:
\[
h_\Omega^{0}(X)
=
\frac{c_1^2(X)-c_2(X)}{3!}
+
h_\Omega^{1}(X),
\]
with
\[
h_\Omega^{1}(X)
=
\lim_{m\to\infty}
\frac{h^1\!\bigl(X,S^m\Omega_X^1\bigr)}{m^3}.
\] (note that this limit exists for surfaces of general type due to the asymptotic Hirzebruch--Riemann--Roch plus Bogomolov's vanishing).  Hence, $X$ having maximal cotangent dimension is equivalent to $\frac{c_1^2(X)-c_2(X)}{3!}
+
h_\Omega^{1}(X)>0$.

\

From the above easily follows:

\

\begin{proposition} \cite{bogomolov_stability}
A surface of general type has maximal cotangent dimension if
\[
c_1^2 > c_2
\qquad
(\text{equivalently } K^2 > 6\chi).
\]
\end{proposition}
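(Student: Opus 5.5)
The plan is to read the statement off the asymptotic formula displayed just above,
\[
h^0_\Omega(X)=\frac{c_1^2(X)-c_2(X)}{3!}+h^1_\Omega(X).
\]
The two ingredients behind that formula are the Hirzebruch--Riemann--Roch computation for $S^m\Omega^1_X$ and Bogomolov's vanishing $H^2(X,S^m\Omega^1_X)=0$ for $m\geqslant 3$. Since $h^1_\Omega(X)\geqslant 0$ (it is a limit of nonnegative quantities), the formula gives $h^0_\Omega(X)\geqslant (c_1^2-c_2)/6>0$ as soon as $c_1^2>c_2$. By the characterization recalled above, $h^0_\Omega(X)\neq 0$ is exactly maximal cotangent dimension.

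First I will make the Riemann--Roch step explicit. For a rank $2$ bundle $E$ on a surface, $\chi(X,S^mE)$ is a polynomial in $m$ of degree $3$, and its leading coefficient is $\frac{1}{6}(c_1(E)^2-c_2(E))$. The cleanest way to see this is to pass to $\PP(E)$, where $\chi(S^mE)=\chi(\mathcal{O}_{\PP(E)}(m))=\frac{m^3}{3!}\,\xi^3+O(m^2)$ with $\xi=c_1(\mathcal{O}(1))$. The relation $\xi^2-\pi^*c_1(E)\xi+\pi^*c_2(E)=0$ then gives $\xi^3=c_1(E)^2-c_2(E)$. For $E=\Omega^1_X$ we have $c_1(\Omega^1_X)^2=c_1(X)^2$ and $c_2(\Omega^1_X)=c_2(X)$, so
\[
\chi(X,S^m\Omega^1_X)=\frac{c_1^2-c_2}{6}m^3+O(m^2).
\]

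Next I will use Bogomolov's vanishing for $m\geqslant 3$, which reduces $\chi(S^m\Omega^1_X)$ to $h^0(S^m\Omega^1_X)-h^1(S^m\Omega^1_X)$. This gives
\[
h^0(X,S^m\Omega^1_X)\geqslant \frac{c_1^2-c_2}{6}m^3+O(m^2),
\]
so the growth is cubic and $\mathcal{O}_{\PP(\Omega^1_X)}(1)$ is big. The vanishing is the only nontrivial input, and it is quoted from \cite{bogomolov_stability}. Strictly speaking, we only need to discard $h^1$, which enters with a minus sign, and $h^2$ would have to be controlled anyway. If one wanted to avoid the vanishing theorem, Serre duality gives $h^2(S^m\Omega^1_X)=h^0(S^m T_X\otimes K_X)$, and its $m^3$ growth would have to be bounded; for surfaces of general type this is precisely the content of Bogomolov's result. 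So that is the step I regard as the real obstacle, and I will treat it as given.

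Finally I will check the stated equivalence $c_1^2>c_2\iff K^2>6\chi$. This follows from Noether's formula $12\chi=c_1^2+c_2=K^2+c_2$: substituting $c_2=12\chi-K^2$ into $c_1^2>c_2$ gives $K^2>12\chi-K^2$, that is $K^2>6\chi$. Birational invariance of the symmetric plurigenera lets us work on the minimal model, where $K^2$ and $\chi$ are the usual invariants.
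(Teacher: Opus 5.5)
Your proposal is correct and follows the paper's route: the paper deduces the statement directly from the asymptotic identity $h^0_\Omega(X)=\frac{c_1^2-c_2}{3!}+h^1_\Omega(X)$, which comes from Hirzebruch--Riemann--Roch together with Bogomolov's vanishing $H^2(X,S^m\Omega^1_X)=0$ for $m\geqslant 3$, combined with $h^1_\Omega(X)\geqslant 0$. Your computation $\xi^3=c_1^2-c_2$ on $\PP(\Omega^1_X)$, the reduction to the minimal model, and the check of $K^2>6\chi$ via Noether's formula only make explicit details the paper leaves implicit.
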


\

If a minimal surface $X$ lies in the region $K^2/\chi \leqslant 6$, then having maximal
cotangent dimension is more delicate, since one needs  to take into account
$h_\Omega^1(X)$, which is dependent on the complex structure. In particular, if $K^2/\chi \leqslant 6$, maximal cotangent dimension is not
necessarily preserved under deformation \cite{bogomolov_nodes}.

\medskip

To get a hold on $h_\Omega^1(X)$, an approach initiated in
\cite{bogomolov_nodes} consists of giving a lower bound for
$h_\Omega^1(X)$ coming from the singularities of the canonical model
$X_{\mathrm{can}}$ of $X$, see also \cite{roulleau2014},\cite{bruin2022explicit} and \cite{ADOWI}.

\medskip

The singularity invariants involved in the lower bound for $h_\Omega^1(X)$
are the following: let $x$ be a canonical surface singularity
(or equivalently $x$ is an ADE singularity), and let $U_x$ be an affine surface
with a single singularity of type $x$. Then define
\[
h_\Omega^1(x)
=
\liminf_{m \to \infty}
\frac{h^0\bigl(U_x, R^1\sigma_* S^m\Omega_X^1\bigr)}{m^3}.
\]

\noindent where $\sigma:\tilde {U_x} \to U_x$ is the minimal resolution. If $x$ is of type $A$, then the $\liminf$ is actually a regular  limit \cite{ADOWII} (the same is expected for type D and E).

\

The lower bound for $h_\Omega^1(X)$ for a minimal surface of general type $X$
that we obtain is
\begin{align}
L_\Omega^1(X):=\sum_{x\in \mathrm{Sing}(X_{\mathrm{can}})} h_\Omega^1(x),
\end{align}
from which follows:

\medskip

\noindent
\textbf{(CMS--criterion)} \cite{ADOWI} Let $X$ be a minimal surface of general type. Then $X$ has maximal cotangent
dimension if
\begin{align}\label{cms}
L_\Omega^1(X) + \frac{c_1^2(X)-c_2(X)}{3!} > 0.
\end{align}

\medskip

The CMS-criterion gave the best known results for regular surfaces with maximal cotangent dimension to date. The singularity data required by the CMS-criterion doesn't involve (global) information on the geometry of the singularities arrangement, it just involves the type and number of singularities. This, in particular, makes it easy to apply and to understand its reach, as illustrated ahead. On the other hand, in \cref{max} we will give a construction where the maximal cotangent dimension holds and it is  beyond the reach of the CMS-criterion. This result is heavily dependent of the geometry of the singularities arrangement, which the CMS-criterion does not take into account.  

\

In \cite{ADOWII} it was shown that the invariants $h_\Omega^1(x)$, where $x$ is
an $A_n$ singularity, denoted by $h_\Omega^1(A_n)$, are described via a closed
formula in $n$, this formula gives, in particular, that

\begin{align} \label{h-bound}
    \frac{n}{6}-\frac{1}{45} < h_\Omega^1(A_n) < \frac{n}{6}.
\end{align}

\medskip

The lower bound for $h_\Omega^1(A_n)$ is useful to provide examples of surfaces
with maximal cotangent dimension \cite{ADOWI}.

\

The upper bound in \cref{h-bound} restricts the $(K^2,\chi)$-region
where the CMS-criterion can be applied, for surfaces of type~A, i.e.\ whose
canonical model only has canonical singularities of type~A. The restriction originates from bounds on the possible singularity
collections on $X_{\mathrm{can}}$ for a minimal surface of general type
$X$ with given $K^2$ and $\chi$. There are two general bounds:

\medskip

\noindent
(Miyaoka bound)
\[
\sum_{x \in \mathrm{Sing}(X_{\mathrm{can}})} c_2(x)
\;\le\;
c_2(X) - \frac{1}{3}c_1^2(X).
\]

\medskip

\noindent
(Hodge bound)
\begin{align}\label{hodge}
   \rho_{-2}(X)\le\frac{1}{6}\bigl(5c_2(X) - c_1^2(X)\bigr) + b_1(X) - 1.
\end{align}

\medskip

\noindent where $c_2(x)$ is the local second Chern number of the singularity $x$, e.g. $c_2(A_n)=n+1-\frac{1}{n+1}$, and $\rho_{-2}(X) = \#\{(-2)\text{-curves on } X\}$.

\medskip

These bounds  give an upper bound for $L_\Omega^1(X)$ from
the invariants $K^2$ and $\chi$. The upper bound coming from the Hodge bound is the strongest of the two for surfaces of type~A in the region $\frac{K^2}{\chi} \leqslant 6$ and the following holds

\begin{proposition} \cite{ADOWI} \label{fails}
Let $X$ be a minimal surface of type~A in the region
$\frac{K^2}{\chi} \leqslant 2$,
then
\[
L_\Omega^1(X) + \frac{c_1^2 - c_2}{3!} < 0
\]
(i.e. $X$ is beyond the reach of the  CMS-criterion).
    
\end{proposition}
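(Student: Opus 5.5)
Let $s=\sum_i n_i$ denote the total Milnor number (the sum of the $A_{n_i}$ indices) of the singularities of $X_{\mathrm{can}}$; this is the number of $(-2)$-curves on the minimal resolution. Since $X$ is of type~A, every singularity of $X_{\mathrm{can}}$ is some $A_{n_i}$. The plan is to bound $L^1_\Omega(X)$ from above by a linear function of $s$, bound $s$ using the Hodge bound, and then compare with $(c_1^2-c_2)/6$ in the region $K^2\leqslant 2\chi$.

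\textbf{Step 1.} By the strict upper bound in \cref{h-bound},
\[
L^1_\Omega(X)=\sum_i h^1_\Omega(A_{n_i})<\sum_i \frac{n_i}{6}=\frac{s}{6},
\]
with the convention that when there are no singularities we have $L^1_\Omega=0$ and the strictness comes from elsewhere, as explained at the end. Each $A_n$ contributes exactly $n$ smooth rational $(-2)$-curves to the minimal resolution, so $s\leqslant \rho_{-2}(X)$.

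\textbf{Step 2.} We apply the Hodge bound \eqref{hodge} with $c_1^2=K^2$, $c_2=12\chi-K^2$ and $b_1=2q$. It gives
\[
s\leqslant \frac16\bigl(60\chi-6K^2\bigr)+2q-1=10\chi-K^2+2q-1 .
\]
A cleaner route may be available. The $(-2)$-curves span a negative definite sublattice of $K^\perp$ inside $H^{1,1}$, so $s\leqslant h^{1,1}-1=10\chi-K^2+2q-1$ (using $h^{1,1}=b_2-2p_g$ and Noether's formula). This is the same bound.

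\textbf{Step 3.} Combining the two steps,
\[
L^1_\Omega+\frac{c_1^2-c_2}{6}<\frac{10\chi-K^2+2q-1}{6}+\frac{2K^2-12\chi}{6}=\frac{K^2-2\chi+2q-1}{6}.
\]

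\textbf{Main obstacle: the $q$ term.} For $q=0$ the right-hand side is $\leqslant -1/6<0$ whenever $K^2\leqslant 2\chi$, which finishes the regular case. For irregular surfaces we need extra input. Debarre's inequality $K^2\geqslant 2p_g$ for irregular surfaces gives $K^2\geqslant 2\chi+2q-2$. Combined with $K^2\leqslant 2\chi$, this forces $q\leqslant 1$. When $q=1$ it also forces $K^2\in\{2\chi,2\chi+?\}$; more precisely $K^2\geqslant 2\chi$, hence $K^2=2\chi$, and the bound above becomes $+1/6$. That is not sufficient, so in this case I would sharpen Step 1 with the lower-order constant. The idea is to use $h^1_\Omega(A_n)\leqslant n/6-c$ for some uniform $c>0$, presumably extracted from the closed formula in \cite{ADOWII}; the paper only quotes the interval, so this is exactly where that formula would be needed. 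Alternatively one can sharpen the lattice bound in Step 2: the Albanese pullback classes contribute further to $H^{1,1}$, and one checks whether they are orthogonal to the $(-2)$-curves. I would first try the lattice refinement, since for $q=1$ the class of an Albanese fiber $F$ satisfies $F^2=0$, $F\cdot K>0$ and is orthogonal to the vertical $(-2)$-curves, which lowers the available rank by one. That gives $s\leqslant 10\chi-K^2+2q-2$, and hence a bound of $0$ with strict inequality from Step 1.

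Finally, strictness in the case with no singularities: there $L^1_\Omega=0$ and $c_1^2-c_2=2K^2-12\chi\leqslant -8\chi<0$.
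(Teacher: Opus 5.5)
Your argument is correct. Steps 1--3 are exactly the route the paper points to before citing \cite{ADOWI}: combine $h^1_\Omega(A_n)<n/6$ with the Hodge bound on $\rho_{-2}$.

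You go beyond the paper in the irregular boundary case $q=1$, $K^2=2\chi$. This is the only irregular case in the region, by Debarre or by the Bombieri/Horikawa facts the paper quotes. There, the Hodge bound as stated, with $b_1-1=1$, only gives $<1/6$. Your Albanese-fiber refinement is the right fix, and it goes through:
\begin{itemize}
\item every $(-2)$-curve is vertical, because $\mathbb{P}^1$ has no nonconstant map to an elliptic curve;
\item $K$ and $F$ are linearly independent, since $K^2>0=F^2$, and both are orthogonal to the negative definite span of the $(-2)$-curves;
\item so $K$, $F$ and the $s$ curves span an $(s+2)$-dimensional subspace of $\mathrm{NS}(X)\otimes\mathbb{R}$, whence $s\leqslant h^{1,1}-2=8\chi$, and the strict inequality follows.
\end{itemize}
You should state the verticality explicitly rather than just calling the curves ``vertical''.

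You should also discard the other option you float. Since $n/6-h^1_\Omega(A_n)<1/45$ at each singular point, a lower-order correction saves less than $7/45<1/6$ when $X_{\mathrm{can}}$ has at most seven singular points. So it cannot close the gap in general.
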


\medskip

For (minimal) regular surfaces, the example that satisfies the CMS-criterion with the lowest $K^2/\chi$ ratio known ($K^2/\chi=25/11$) is presented in \cref{CMS}. This example is relevant to the theme of this work also because the surface is a double cover of a non minimal rational surface, the non minimality condition  must be present since we show in section 3 that a smooth double cover of a minimal rational surface cannot have maximal cotangent dimension.

\

For irregular surfaces, the best and final result follows from a distinct approach using:

\medskip

\begin{proposition} (\cite{sakai})\label{sakai}
Let $X$ be a surface of general type fibered over a curve $C$ with genus
\[
g(C) \geqslant 2.
\]
Then $X$ has maximal cotangent dimension.
\end{proposition}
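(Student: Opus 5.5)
We want to show that a surface of general type $X$ with a fibration $f\colon X\to C$ over a curve of genus $g(C)\geqslant 2$ has big $\Omega^1_X$. The plan is to exhibit, on $\mathbb{P}(\Omega^1_X)$, a big line bundle sitting inside $\mathcal{O}(1)$, built from the base and the general fiber. By birational invariance of $\lambda_\Omega$ we may assume $f$ is relatively minimal, and taking the Stein factorization we may assume it has connected fibers. Since $X$ is of general type, the general fiber $F$ has genus $g(F)\geqslant 2$, because elliptic or rational fibrations would give Kodaira dimension at most $1$.

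The key object is the injection $f^*K_C=f^*\Omega^1_C\hookrightarrow \Omega^1_X$. Let $\pi\colon \mathbb{P}(\Omega^1_X)\to X$ denote the projectivization. Dualizing this injection gives a section of $\mathcal{O}(1)\otimes \pi^*f^*K_C^{-1}$, whose zero divisor is the "horizontal" divisor $P\subset \mathbb{P}(\Omega^1_X)$ of directions tangent to the fibers (plus possibly components over singular fibers). Thus
\[\mathcal{O}(1)=\pi^*f^*K_C\otimes \mathcal{O}(P).\]
Since $K_C$ is ample, $\pi^*f^*K_C$ is effective with Iitaka dimension $1$. It only sees the base direction, so we must combine it with something positive along fibers.

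Along the fibers we use the relative cotangent sheaf. On the open set of smooth fibers there is the quotient $\Omega^1_X\to \Omega^1_{X/C}\to 0$, and restricted to a fiber this is $K_F$, which is ample since $g(F)\geqslant2$. Hence $S^m\Omega^1_X$ has a filtration with graded pieces $f^*K_C^{\,i}\otimes (\Omega^1_{X/C})^{m-i}$, modulo torsion. The cleanest way to get $m^3$ growth is a direct count. Sections of $f^*K_C^{\,m-j}\otimes\omega_{X/C}^{\,j}$ embed, up to bounded twists at singular fibers, into $S^m\Omega^1_X$ after multiplying by a fixed power of a section vanishing on the discriminant. I would rather use the following equivalent argument: it suffices to show $h^0(S^m\Omega^1_X)\geqslant c\,m^3$.

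To do this, use the inclusions
\[H^0(C,K_C^{\,m-j})\otimes H^0(X,\,f^*A^{-1}\otimes K_X^{\,j})\to H^0(S^m\Omega^1_X)\]
for $j$ small relative to $m$, say $j\leqslant \epsilon m$. These arise from $S^{m}\Omega^1_X\supset f^*K_C^{\,m-j}\otimes S^{j}\Omega^1_X$ and from symmetric differentials supported on the relative part. The point to make precise is that the products $dt^{m-j}\cdot\eta$, with $t$ a local coordinate on $C$ and $\eta$ running over sections whose restrictions to fibers are nonzero in $H^0(F,K_F^{\,j})$, are linearly independent in $S^m\Omega^1_X$, because they have distinct bidegree in the fiber/base directions. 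Counting then gives about $m\cdot(\text{number of independent fiber-degree-}j\text{ pieces})\sim m\cdot j\cdot m$, summed over $j$. This yields $m^3$ growth once we know that $H^0(X,S^j\Omega^1_X\otimes f^*K_C^{\,k})$ restricts nontrivially to fibers with dimension growing linearly in $j$ for $k\gg0$.

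For that last point, $f_*(S^j\Omega^1_X)$ surjects generically onto $f_*\omega_{X/C}^{\,j}$ twisted appropriately. The latter bundle has rank $(2j-1)(g(F)-1)$, and twisting by the ample $K_C^{\,k}$ with $k\geqslant cj$ makes it globally generated. Combining this with the base factor, whose sections number about $(m-j-k)(2g(C)-2)$, yields roughly $\sum_j j(m-cj)\sim m^3$ sections. The main obstacle is controlling the singular fibers: on them $\Omega^1_X\to\omega_{X/C}$ fails to be surjective and the graded pieces acquire torsion or colength. I would handle this by working with $\omega_{X/C}$ via the exact sequence $0\to f^*K_C\to\Omega^1_X\to \Omega^1_{X/C}\to0$ with $\Omega^1_{X/C}\hookrightarrow\omega_{X/C}\otimes I_Z$. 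The loss at the finitely many singular points of fibers is $O(m^2)$ per point, so it does not affect the $m^3$ count.
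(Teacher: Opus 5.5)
Your reductions and the identity $\mathcal{O}(1)=\pi^*f^*K_C\otimes\mathcal{O}(P)$ are fine. Using $f^*K_C^{\,m-j}\otimes S^j\Omega^1_X\subset S^m\Omega^1_X$ only for $j\leqslant\epsilon m$ is also a sensible idea. But the step that is supposed to produce the sections has a genuine gap.

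\emph{The displayed map is not defined.} There is no natural map $K_X^{\,j}\to S^j\Omega^1_X$, so your map out of $H^0(X,f^*A^{-1}\otimes K_X^{\,j})$ does not exist. Indeed $K_X=f^*K_C\otimes\omega_{X/C}$, and $\omega_{X/C}$ is a quotient of $\Omega^1_X$, not a subsheaf. A nonzero map $\omega_{X/C}\to\Omega^1_X$ would split $0\to\mathcal{O}_F\to\Omega^1_X|_F\to K_F\to 0$ on a general fibre. The class $e\in H^1(F,T_F)$ of that sequence is the Kodaira--Spencer class, which is nonzero unless $f$ is isotrivial.

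\emph{Generic surjectivity fails.} For the same reason, $f_*S^j\Omega^1_X\to f_*\omega_{X/C}^{\,j}$ is not generically surjective. For $j=2$, the coboundary $H^0(F,K_F^{2})\to H^1(F,\Omega^1_X|_F)\to H^1(F,K_F)$ is the Serre-duality pairing with $2e$, hence nonzero. Only a bounded corank, at most $g(F)+1$, is true.

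\emph{The crux is unproved.} Global generation of $f_*(S^j\Omega^1_X)\otimes K_C^{\,k}$ for $k\geqslant cj$ is essentially a lower bound $\mu_{\min}(f_*S^j\Omega^1_X)\geqslant -c'j$. This is exactly where singular fibres and $R^1f_*$ intervene, and you give no argument for it. Your singular-fibre remark does not supply one. $\Omega^1_{X/C}$ has torsion along non-reduced fibre components, where $f^*dt$ vanishes. So it does not embed in $\omega_{X/C}\otimes I_Z$, and the defect is not concentrated at finitely many points. Also, ``$m\cdot j\cdot m$ summed over $j$'' would give growth of order $m^4$.

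Your filtration idea can be completed without any fibrewise generation, using Riemann--Roch on $X$. Take $j=\lfloor\epsilon m\rfloor$. The inclusion into $S^m\Omega^1_X$, Serre duality, and the effectivity of $f^*K_C$ give
\[h^0(S^m\Omega^1_X)\geqslant h^0\bigl(S^j\Omega^1_X\otimes f^*K_C^{\,m-j}\bigr)\geqslant\chi\bigl(S^j\Omega^1_X\otimes f^*K_C^{\,m-j}\bigr)-h^0\bigl(S^jT_X\otimes K_X\bigr).\]
Here $\chi(S^j\Omega^1_X\otimes f^*K_C^{\,m-j})=\chi(S^j\Omega^1_X)+2(g(C)-1)(g(F)-1)(j^2-1)(m-j)$. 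Both $\chi(S^j\Omega^1_X)$ and $h^0(S^jT_X\otimes K_X)$ are $O(j^3)$. Hence $h^0(S^m\Omega^1_X)\geqslant\bigl(2(g(C)-1)(g(F)-1)\epsilon^2-C\epsilon^3\bigr)m^3-O(m^2)$, which has order $m^3$ for small $\epsilon$.

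The paper, citing Sakai, argues structurally instead, via Lemma~\ref{fib} in the form of Lemma~\ref{fib2}. The generically injective map $f^*K_C\to\Omega^1_X$ with $K_C$ big, together with $\lambda(\Omega^1_X|_F,F)=1$ on a general fibre, gives $\lambda_\Omega(X)=1+1=2$. The fibre statement holds because $\Omega^1_X|_F$ is nef of degree $2g(F)-2>0$. That route never looks at singular fibres, and the paper reuses it for Proposition~\ref{orbibig}. The Riemann--Roch count is more elementary and gives an explicit lower bound.
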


\medskip

This proposition cannot be applied in the region $K^2/\chi \leqslant 2$, since
$q(X) = 0 \quad \text{if } \frac{K^2}{\chi} < 2$ (\cite{Bombieri}) and
$q(X) \leqslant 1 \quad \text{if } \frac{K^2}{\chi} = 2$  (\text{see \cite{Horikawa1979}}), but implies:

\begin{proposition}

\[
\alpha^+_{\mathrm{big}}
:=
\inf
\left\{
\frac{K^2}{\chi}(X)
\;\middle|\;
\lambda_{\Omega}(X)=2,\; q(X)\neq 0,\;
X  \text{ minimal surface}
\right\}
=2.
\]
\end{proposition}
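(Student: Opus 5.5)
We have to show two things: the infimum $\alpha^+_{\mathrm{big}}$ is at least $2$, and this bound is approached, i.e.\ attained or approximated from above.

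\emph{Lower bound.} Let $X$ be a minimal surface of general type with $q(X)\neq 0$. By the cited result of Bombieri \cite{Bombieri}, $K^2/\chi<2$ forces $q(X)=0$. Hence every $X$ in the set satisfies $K^2/\chi\geqslant 2$, and so $\alpha^+_{\mathrm{big}}\geqslant 2$. This step needs no cotangent information at all.

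\emph{Upper bound.} It suffices to exhibit minimal irregular surfaces with big cotangent bundle and ratio $K^2/\chi$ arbitrarily close to $2$ (or equal to $2$). The plan is to use \cref{sakai}, so we need surfaces fibered over a curve $C$ of genus $\geqslant 2$ with slope tending to $2$. Take a genus $2$ fibration $f\colon X\to C$ with $g(C)=b\geqslant 2$, constructed as a double cover of a ruled surface $\PP(V)\to C$. Branch it along a smooth curve $B\in|6\Delta+f^*\mathfrak a|$, where $\mathfrak a$ is a sufficiently ample divisor on $C$ (with $6\Delta+f^*\mathfrak a$ divisible by $2$ in $\mathrm{Pic}$), so that $B$ can be chosen smooth and the double cover is a smooth minimal surface of general type. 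Then $q(X)\geqslant b>0$, and \cref{sakai} gives $\lambda_\Omega(X)=2$.

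For the double cover formulas, write $B=2L$. Then
\[
K_X^2=2(K_P+L)^2,\qquad \chi(\mathcal O_X)=2\chi(\mathcal O_P)+\tfrac12 L\cdot(K_P+L).
\]
Letting $\deg\mathfrak a\to\infty$, both quantities are dominated by the terms linear in $\deg\mathfrak a$. Computing on the ruled surface (the standard Horikawa/Xiao computation for genus $2$ fibrations), the ratio $K^2/\chi$ tends to $2$. Equivalently, we can use Xiao's formula $K_{X/C}^2\geqslant 2\chi_f$ together with the existence of fibrations attaining slope $2$, for which $K^2=2\chi+6(b-1)-\text{const}$; as $\chi\to\infty$ with $b$ fixed, this gives $K^2/\chi\to 2$. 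Combining with the lower bound yields $\alpha^+_{\mathrm{big}}=2$.

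The main obstacle is this explicit verification: choosing the rank $2$ bundle $V$ and the class $\mathfrak a$ so that the branch curve can be taken smooth (no infinitely near singular points, which would add to $K^2$), and then checking that the leading coefficients of $K^2$ and $\chi$ in $\deg\mathfrak a$ have ratio exactly $2$. I would first try the simplest choice $V=\mathcal O_C\oplus\mathcal O_C$, i.e.\ $P=C\times\PP^1$ with $B\in|p_1^*\mathfrak a\otimes p_2^*\mathcal O(6)|$, and then compute directly:
\[
K_X^2=2(2b-2+\deg\mathfrak a/2)\cdot 2,\qquad \chi=\tfrac{3}{4}\deg\mathfrak a\cdot 2/ \ldots
\]
If the leading ratio does not come out as $2$, I would instead rely on the known fact that the slope-$2$ genus $2$ fibrations are exactly those with all fibers $2$-connected and the branch curve reduced with negligible singularities, and cite Horikawa's formula $K^2=2\chi-6+6b+\sum H(F_i)$ with all $H(F_i)=0$, which makes the ratio tend to $2$ as $\chi\to\infty$.
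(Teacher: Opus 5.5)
Your strategy is the paper's. The lower bound comes from Bombieri's result that $K^2<2\chi$ forces $q=0$. The upper bound comes from Sakai's criterion (\cref{sakai}) applied to genus $2$ fibrations built as double covers of a ruled surface over a curve of genus $\geqslant 2$, branched along a curve of relative degree $6$ whose degree along the base tends to infinity. The paper uses $\PP(E)$ with $E$ very ample and $B\in|6H|$; your $C\times\PP^1$ with $B\in|6S+p_1^*\mathfrak a|$ is the same construction. The real deficiency is that you never finish the one computation that \emph{is} the upper bound: your expression for $\chi$ is incomplete, and you hedge on whether the ratio tends to $2$.

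It does tend to $2$. Set $S=C\times\{\mathrm{pt}\}$ and $F=\{\mathrm{pt}\}\times\PP^1$, so $S^2=F^2=0$, $S\cdot F=1$ and $K_P\equiv(2b-2)F-2S$. Write $\mathfrak a=2\mathfrak a'$ with $\deg\mathfrak a'=a$. Then $L\equiv 3S+aF$ and $K_P+L\equiv(2b-2+a)F+S$, hence
\[
K_X^2=2(K_P+L)^2=4a+8(b-1),\qquad \chi(\mathcal O_X)=2(1-b)+\tfrac12\bigl(4a+6(b-1)\bigr)=2a+(b-1).
\]
So $K_X^2=2\chi(\mathcal O_X)+6(b-1)$, and $K_X^2/\chi(\mathcal O_X)$ decreases to $2$ as $a\to\infty$. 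This matches the paper's $K^2=6\deg E+8(g-1)$, $\chi=3\deg E+g-1$.

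The side conditions you only assert are immediate:
\begin{itemize}
\item $p_1^*\mathcal O_C(\mathfrak a)\otimes p_2^*\mathcal O_{\PP^1}(6)$ is very ample once $\deg\mathfrak a\geqslant 2b+1$, so Bertini gives a smooth $B$.
\item $K_P+L$ is ample, so $K_X=\pi^*(K_P+L)$ is ample and $X$ is minimal of general type.
\item $q(X)\geqslant b$, since $X\to C$ is surjective.
\end{itemize}

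Your fallback via Horikawa's formula $K^2=2\chi-6+6b+\sum H(F_i)$ is not an independent argument. It only reduces the claim to the existence, over a fixed base of genus $b\geqslant 2$, of genus $2$ fibrations with all $H(F_i)=0$ and $\chi\to\infty$. That existence is exactly what this construction supplies.
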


\medskip

\begin{proof}
This result follows from  \cref{sakai}
and the following standard construction of genus $2$ fibrations.

\medskip

Let $C$ be a smooth curve of genus $g$ and
$E$ a very ample rank $2$ vector bundle over $C$.

Consider the $\mathbb{P}^1$-bundle
\[
p \colon \mathbb{P}(E) \longrightarrow C.
\]

Let
\[
H \in \left|\mathcal{O}_{\mathbb{P}(E)}(1)\right|
\]
and
\[
K_{\mathbb{P}(E)}
=
-2H + p^*(K_C + \det E)
\]
be respectively a tautological divisor and a canonical divisor
of $\mathbb{P}(E)$.

The following holds:
\[
H^2 = \deg E,
\]
\[
K_{\mathbb{P}(E)} \cdot H
=
-\deg E + 2(g-1),
\]
\[
K_{\mathbb{P}(E)}^2 = 8(g-1).
\]

Consider the double cover $\pi \colon X_E \to \mathbb{P}(E)$ branched along $B \in |6H|$
smooth (possible since $H$ is very ample). The surface $X_E$ by construction is a genus $2$
fibration over $C$ and hence has $\lambda_\Omega(X_E)=2$ by \cref{sakai}.

\medskip

We have 
\[
K_{X_E} = \pi^*(K_{\mathbb{P}(E)} + 3H)
\]
and the following formulas hold (\cite[V.22]{BPV}):

\[
\chi(X_E)
=
2\chi(\mathbb{P}(E))
+
\frac{1}{2}K_{\mathbb{P}(E)}\cdot 3H
+
\frac{1}{2}(3H)^2
=
3\deg E + g -1,
\]

\[
K_{X_E}^2
=
2K_{\mathbb{P}(E)}^2
+
4K_{\mathbb{P}(E)}\cdot 3H
+
2(3H)^2
=
6\deg E + 8(g-1).
\]

Hence
\[
\frac{K_{X_E}^2}{\chi(X_E)}
=
\frac{6\deg E + 8(g-1)}
{3\deg E + g -1}
>2,
\]
and since $\deg E$ can be arbitrarily large,
the result follows.

\end{proof}

\

\

\

\section{Symmetric log-differential and higher degree residue exact sequences}

\

\

Let \(X\) be a smooth projective variety, \(D=D_1+\cdots+D_c\subset X\) be a simple normal crossing divisor with \(c\in \NN\) components, and \(\Omega_X^1(\log D)\) denote the logarithmic cotangent bundle. Recall that this bundle is the locally free sheaf defined as the subsheaf of the sheaf of meromorphic \(1\)-forms with at most logarithmic poles along \(D\). 
That is to say, in any open subset \(U\) with \(\sigma_i\in \mathcal{O}(U)\) such that \(D_i\cap U\) is given by \((\sigma_i=0)\) for all \(1\leqslant i\leqslant c\), one has
\[\Omega_X^1(\log(D))(U)=\left\{\frac{d\sigma_1}{\sigma_1}f_1 +\cdots+ \frac{d\sigma_c}{\sigma_c}f_c +\omega\ ;\ \  f_1,\dots, f_c\in \mathcal{O}(U), \ \ \omega\in \Omega_X^1(U)\right\}.\]

In order to compute the cohomology of these sheaves in an inductive way on the number of components, we introduce  a natural filtration on the pole order along any chosen component (here the component \(D_c\) for simplicity). 
Let us write \(D'=D_1+\cdots+D_{c-1}\), so that \(D=D'+D_c\). 
Let \(m\in \mathbb{N}\) and consider the \(m\)-th symmetric power \(S^m\Omega_X^1(\log D)\). There exists an increasing filtration \(S^m\Omega_X^1(\log D)_{(\bullet),D_c}\) on  \(S^m\Omega_X^1(\log D)\)
defined as follows: For any \(0\leqslant k\leqslant m\), the sheaf \(S^m\Omega_X^1(\log D)_{(k),D_c}\) is the subsheaf of \(S^m\Omega_X^1(\log D)\) of forms with pole order at most \(k\) along \(D_c\).
That is to say, for any open subset \(U\) over which \(D_1,\dots,D_c\) are defined by \((\sigma_1=0),\dots,(\sigma_c=0)\) for some \(\sigma_1,\dots, \sigma_c\in \mathcal{O}({U})\) as above, 
\[S^m\Omega_X^1(\log D)_{(k),D_c}(U):=\left\{ \omega\in S^m\Omega_X^1(\log D)(U); \ \ \sigma_c^k\omega\in S^m\Omega_X^1(\log D')(U)\right\}.\]

Over such an open subset, one can write any element \(\omega\in S^m\Omega_X^1(\log D)_{(k),D_c}(U)\) in the form
\[\omega=\left(\frac{d\sigma_c}{\sigma_c}\right)^k\alpha+\beta\]
where \(\alpha\in S^{m-k}\Omega_X^1(\log D')(U)\) and \(\beta\in S^m\Omega_X^1(\log D)_{(k-1),D_c}(U)\), i.e. \(\sigma_c^{k-1}\beta\in S^m\Omega_X^1(\log D')(U)\).


Observe however that the decomposition \(\omega=\left(\frac{d\sigma}{\sigma}\right)^k\alpha+\beta\) need not be unique. It is not hard to see that \(S^m\Omega_X^1(\log D)_{(k),D_c}\) are actually locally free sheaves. Indeed, one can suppose that we have local coordinates \((z_1,\dots, z_n)\) on some open subset \(U\) such that \(D_i=(z_i=0)\)  in \(U\) for \(1\leqslant i\leqslant c\). Then the above description shows that one has a local frame given by 
\[\left(\left(\frac{dz_1}{z_1}\right)^{\ell_1}\cdots \left(\frac{dz_c}{z_c}\right)^{\ell_c}dz_1^{i_1}\cdots dz_n^{i_n}\right)_{\substack{0\leqslant \ell_c \leqslant k\\ \ell_1+\cdots+\ell_c+i_1+\cdots+i_n=m}}.\]
Observe that \(S^m\Omega_X^1(\log D)_{(0),D_c}=S^m\Omega_X^1(\log D')\) and \(S^m\Omega_X^1(\log D)_{(m),D_c}=S^m\Omega_X^1(\log D)\).

Recall that one classically has the residue exact sequence
\[0\to \Omega_X^1(\log D')\to \Omega_X^1(\log D)\to \mathcal{O}_{D_c}\to 0,\]
which with our notation also reads
\[0\to S^1\Omega_X^1(\log D)_{(0),D_c}\to S^1\Omega_X^1(\log D)_{(1),D_c}\to \mathcal{O}_{D_c}\to 0.\]

This can be generalized as follows.

\begin{proposition}\label{prop:ExactSequencePole}
    Same notation as above. For any \(1\leqslant k\leqslant m\) one has an exact sequence
    \[0\to S^m\Omega_X^1(\log D)_{(k-1),D_c}\to S^m\Omega_X^1(\log D)_{(k),D_c}\to S^{m-k}\Omega_X^1(\log D')|_{D_c}\to 0.\]
\end{proposition}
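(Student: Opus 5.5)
The plan is to define the quotient map locally by a "leading coefficient" (higher residue) construction, show it glues to a global map, and then check exactness in the adapted local frame given above.

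\textbf{Step 1: local definition.} Work on an open set \(U\) with local equations \(\sigma_1,\dots,\sigma_c\). Given \(\omega\in S^m\Omega_X^1(\log D)_{(k),D_c}(U)\), write
\[\omega=\left(\frac{d\sigma_c}{\sigma_c}\right)^k\alpha+\beta,\qquad \alpha\in S^{m-k}\Omega_X^1(\log D')(U),\ \beta\in S^m\Omega_X^1(\log D)_{(k-1),D_c}(U).\]
Define \(\operatorname{Res}^k_{D_c}(\omega):=\alpha|_{D_c}\in S^{m-k}\Omega_X^1(\log D')|_{D_c}(U)\). Equivalently, this is the restriction to \(D_c\) of the coefficient of \((d\sigma_c)^k\) in \(\sigma_c^k\omega\), read modulo \(\sigma_c\) and modulo the ideal generated by \(d\sigma_c\).

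\textbf{Step 2: well-definedness.} Two things must be checked: independence of the decomposition, and independence of the choice of equation \(\sigma_c\).

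For the decomposition, suppose \((d\sigma_c/\sigma_c)^k\alpha\) lies in the \((k-1)\)-st filtered piece. We must show \(\alpha|_{D_c}=0\). I expect this to be the main obstacle, because of a subtlety. If \(\alpha=d\sigma_c\cdot\gamma\), then
\[\left(\frac{d\sigma_c}{\sigma_c}\right)^k\alpha=\left(\frac{d\sigma_c}{\sigma_c}\right)^{k+1}\sigma_c\gamma,\]
which has pole order \(k+1\) in the log sense but is not in the \((k-1)\)-st piece. So \(\alpha\) really is determined modulo \(\sigma_c\) only, and \(d\sigma_c|_{D_c}\) is \emph{not} zero as a section of \(\Omega_X^1(\log D')|_{D_c}\). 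The argument to use is the local frame. Choose coordinates with \(D_i=(z_i=0)\) and \(\sigma_c=z_c\). Expanding \(\alpha\) in the frame \((dz_1/z_1)^{\ell_1}\cdots(dz_{c-1}/z_{c-1})^{\ell_{c-1}}dz^{I}\) of \(S^{m-k}\Omega^1_X(\log D')\), the product \((dz_c/z_c)^k\alpha\) has coefficients \(z_c^{-k}a_{\ell,I}\) on monomials of exact \(dz_c/z_c\)-degree \(k+i_c\ge k\). Membership in the \((k-1)\)-st piece forces every \(a_{\ell,I}\) to be divisible by \(z_c\), that is, \(\alpha|_{D_c}=0\).

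For the choice of equation, replace \(\sigma_c\) by \(u\sigma_c\) with \(u\) a unit. Then \(d(u\sigma_c)/(u\sigma_c)=d\sigma_c/\sigma_c+du/u\), and \(du/u\) is holomorphic. Expanding the \(k\)-th power, all cross terms have pole order at most \(k-1\), so \(\alpha\) is unchanged. Hence the local maps glue to a global \(\mathcal{O}_X\)-linear morphism \(S^m\Omega_X^1(\log D)_{(k),D_c}\to S^{m-k}\Omega_X^1(\log D')|_{D_c}\).

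\textbf{Step 3: exactness.} Each filtered piece is a subsheaf of the next, so the first map is injective. For surjectivity, a section \(\bar\alpha\) lifts locally to some \(\alpha\), and then \((dz_c/z_c)^k\alpha\) is a preimage. For exactness in the middle, the kernel contains the \((k-1)\)-st piece by construction. Conversely, if \(\alpha|_{D_c}=0\), write \(\alpha=z_c\alpha'\). Then
\[\left(\frac{dz_c}{z_c}\right)^k\alpha=\left(\frac{dz_c}{z_c}\right)^{k-1}dz_c\,\alpha',\]
which has pole order at most \(k-1\). This is cleanest to verify in the frame: there the quotient of the \(k\)-th piece by the \((k-1)\)-st is spanned by the monomials with \(\ell_c=k\), with coefficients taken modulo \(z_c\). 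These monomials match exactly the frame of \(S^{m-k}\Omega_X^1(\log D')|_{D_c}\).
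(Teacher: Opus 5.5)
Your proposal is correct and follows the paper's proof essentially step by step. It uses the same local definition $\omega\mapsto\alpha|_{D_c}$, and the same reduction of independence of the decomposition to showing that $(d\sigma_c/\sigma_c)^k\alpha$ lying in the $(k-1)$-st piece forces $\alpha|_{D_c}=0$; your frame computation simply makes explicit the injectivity of multiplication by $(d\sigma_c)^k$ modulo $\sigma_c$ that the paper uses implicitly. Independence of the equation via $d(u\sigma_c)/(u\sigma_c)=d\sigma_c/\sigma_c+du/u$ and exactness read off from the local frame also match the paper. One correction: drop the aside in Step~1 about reading the coefficient modulo the ideal generated by $d\sigma_c$. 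As you note yourself in Step~2, $d\sigma_c|_{D_c}\neq 0$ in $\Omega_X^1(\log D')|_{D_c}$, and $\alpha|_{D_c}$ is characterized by $(d\sigma_c)^k\alpha|_{D_c}=(\sigma_c^k\omega)|_{D_c}$ alone.
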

\begin{proof}
    Consider an open subset \(U\) on which, for any \(1\leqslant i\leqslant c\), \(D_i\) is given by \((\sigma_i=0)\) for some \(\sigma_i\in \mathcal{O}(U)\). And define the map \[\rho_U:S^m\Omega_X^1(\log D)_{(k),D_c}(U)\to S^{m-k}\Omega_X^1(\log D')|_{D_c}(U)\]
    as follows: for any \(\omega\in S^m\Omega_X^1(\log D)_{(k),D_c}(U)\) one writes 
    \[\omega=\left(\frac{d\sigma_c}{\sigma_c}\right)^k\alpha+\beta,\]
    and then define
    \[\rho_U(\omega):=\alpha|_{D_c}.\]
    Let us see that this map is well defined. Suppose that one has two decompositions
    \[\left(\frac{d\sigma_c}{\sigma_c}\right)^k\alpha+\beta=\omega=\left(\frac{d\sigma_c}{\sigma_c}\right)^k\gamma+\delta\]
    where \(\alpha,\gamma\in S^{m-k}\Omega_X^1(\log D')(U)\) and \(\beta,\delta\in S^m\Omega_X^1(\log D)_{(k-1)}\). Then 
    \[\left(\frac{d\sigma_c}{\sigma_c}\right)^k(\alpha-\gamma)+(\beta-\delta)=0.\]
    But \(\sigma_c^{k-1}(\beta-\delta)\in  S^m\Omega_X^1(\log D')(U)\), and \(\sigma_c^k(\beta-\delta)\) vanishes along \(D_c\). Therefore
    \[(d\sigma_c)^k(\alpha-\gamma)=\sigma_c^{k}\left(\frac{d\sigma_c}{\sigma_c}\right)^k(\alpha-\gamma)=-\sigma_c^k(\beta-\delta)\]
also vanishes along \(D_c\), which means that \((\alpha-\gamma)|_{D_c}=0\) and thus
\[\alpha|_{D_c}=\gamma|_{D_c}.\]
We will now prove that the locally defined \(\rho_U\) glue to a sheaf morphism. Suppose we are given two open subsets \(U_1,U_2\), \(\sigma_{c,1}\in\mathcal{O}(U_1) \) and \(\sigma_{c,2}\in \mathcal{O}(U_2)\) such that \(D_c\cap U_1=(\sigma_{c,1}=0)\) and \(D_c\cap U_2=(\sigma_{c,2}=0)\). On \(U_{12}=U_1\cap U_2\) there exists \(g\in \mathcal{O}(U_{12})^*\) such that \(\sigma_{c,1}=g\sigma_{c,2}.\) Suppose now we are given \(\omega\in S^m\Omega_X^1(\log D)_{(k)}(U_{12})\) and we write
\[\omega=\left(\frac{d\sigma_{c,1}}{\sigma_{c,1}}\right)^k\alpha_1+\beta_1=\left(\frac{d\sigma_{c,2}}{\sigma_{c,2}}\right)^k\alpha_2+\beta_2.\]
Then using the formula
\[\frac{d\sigma_{c,1}}{\sigma_{c,1}}=\frac{d(g\sigma_{c,2})}{g\sigma_{c,2}}=\frac{d\sigma_{c,2}}{\sigma_{c,2}}+\frac{dg}{g},\]
one finds 
    \begin{eqnarray*}
        \left(\frac{d\sigma_{c,1}}{\sigma_{c,1}}\right)^k\alpha_1+\beta_1&=&
        \left(\frac{d\sigma_{c,2}}{\sigma_{c,2}}+\frac{dg}{g}\right)^k\alpha_1+\beta_1\\
        &=&\left(\frac{d\sigma_{c,2}}{\sigma_{c,2}}\right)^k\alpha_1+\sum_{\ell=0}^{k-1}\binom{k}{\ell}\left(\frac{d\sigma_{c,2}}{\sigma_{c,2}}\right)^\ell\left(\frac{dg}{g}\right)^{k-\ell}\alpha_1+\beta_1.
    \end{eqnarray*}
    Since
    \[\beta_1':=\sum_{\ell=0}^{k-1}\binom{k}{\ell}\left(\frac{d\sigma_{c,2}}{\sigma_{c,2}}\right)^\ell\left(\frac{dg}{g}\right)^{k-\ell}\alpha_1+\beta_1\in S^m\Omega_X^1(\log D)_{(k-1)}\] 
    the above argument shows that 
    \[\rho_{U_2}(\omega)=\alpha_2|_{D_c}=\alpha_1|_{D_c}=\rho_{U_1}(\omega)\]
    which proves that those locally defined map glue into a global map \(\rho\).

    It remains to show that \(\rho\)  is surjective and that \(\ker(\rho)=S^m\Omega_X^1(\log D)_{(k-1)}\) which are readily verified from the local description.
    
\end{proof}


Considering the long exact sequences in cohomology associated to the higher degree residue exact sequences, one obtains a way of proving vanishing results for logarithmic symmetric differential forms. 

\begin{proposition}\label{prop:CohomologyInequality}
    Same notation as above. Then for any \(1\leqslant k \leqslant m\), one has 
    \[h^0(X,S^{m}\Omega_X^1(\log D)_{(k),D_c})\leqslant h^0(X,S^m\Omega_X^1(\log D'))+\sum_{i=1}^kh^0(D_c,S^{m-i}\Omega_X^1(\log D')|_{D_c}).\]
\end{proposition}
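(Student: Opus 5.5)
The plan is an induction on \(k\), using the exact sequences of \cref{prop:ExactSequencePole} together with left exactness of global sections. For every \(1\leqslant j\leqslant m\), \cref{prop:ExactSequencePole} provides a short exact sequence
\[0\to S^m\Omega_X^1(\log D)_{(j-1),D_c}\to S^m\Omega_X^1(\log D)_{(j),D_c}\to S^{m-j}\Omega_X^1(\log D')|_{D_c}\to 0.\]
Taking the associated long exact cohomology sequence, we only need its first three terms:
\[0\to H^0(X,S^m\Omega_X^1(\log D)_{(j-1),D_c})\to H^0(X,S^m\Omega_X^1(\log D)_{(j),D_c})\to H^0(D_c,S^{m-j}\Omega_X^1(\log D')|_{D_c}).\]
Exactness in the middle shows that the dimension of the middle space is at most the sum of the dimensions of the two outer spaces. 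This gives
\[h^0(X,S^m\Omega_X^1(\log D)_{(j),D_c})\leqslant h^0(X,S^m\Omega_X^1(\log D)_{(j-1),D_c})+h^0(D_c,S^{m-j}\Omega_X^1(\log D')|_{D_c}).\]
Here we use that cohomology on \(X\) of a sheaf supported on \(D_c\) coincides with its cohomology on \(D_c\).

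Next we apply this inequality successively for \(j=k,k-1,\dots,1\) and add the results, so that the intermediate terms telescope. This reduces everything to \(h^0(X,S^m\Omega_X^1(\log D)_{(0),D_c})\). By the observation made right after the definition of the filtration, \(S^m\Omega_X^1(\log D)_{(0),D_c}=S^m\Omega_X^1(\log D')\). The resulting bound is exactly the one claimed.

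No step here is a genuine obstacle. The only points that need care are the identification of the \(0\)-th piece of the filtration and the fact that all the \(h^0\) are finite, which holds because \(X\) and \(D_c\) are projective and the sheaves are coherent. An equivalent way to organize the argument is as a formal induction on \(k\), with the case \(k=1\) being the residue sequence applied once.
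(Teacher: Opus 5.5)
Your argument is correct and matches the paper's: the paper gives no separate proof and only says the bound comes from the long exact cohomology sequences of the higher degree residue sequences of \cref{prop:ExactSequencePole}, which is exactly the telescoping of left-exact \(H^0\) pieces down to \(S^m\Omega_X^1(\log D)_{(0),D_c}=S^m\Omega_X^1(\log D')\) that you wrote out. One small correction of wording: your base case \(k=1\) uses the sequence of \cref{prop:ExactSequencePole} with \(k=1\), which is the classical residue sequence only when \(m=1\).
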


Observe that  for \(i=m\), \(S^{m-i}\Omega_X^1(\log D')|_{D_c}\simeq \mathcal{O}_{D_c}\), hence \(h^0(D_c,S^{m-i}\Omega_X^1(\log D')|_{D_c})=1 \neq 0\). Hence we deduce the following. 
\begin{corollary}\label{cor:VanishingLogGeneral}
    Let \(X\) be a projective manifold and \(D=D_1+\cdots+D_c=D'+D_c\) be a simple normal crossing divisor on \(X\). For any \(m\geqslant 1\), if \(h^0(D_c,S^{m-i}\Omega_X^1(\log D')|_{D_c})=0\) for all \(1\leqslant i< m\), then
    \[h^0(X,S^m\Omega_X^1(\log D))\leqslant  h^0(X,S^{m}\Omega_X^1(\log D'))+1, \quad \text{and}\] 
    \[h^0(X,S^m\Omega_X^1(\log D)_{(k),D_c})=  h^0(X,S^{m}\Omega_X^1(\log D'))\quad \forall k<m.\]
In particular, if \(h^0(X,S^{m}\Omega_X^1(\log D'))=0\), then \(h^0(X,S^m\Omega_X^1(\log D))\leqslant 1\) and moreover, for all \(k<m\), one has  \(h^0(X,S^m\Omega_X^1(\log D)_{(k),D_c})=0\).
\end{corollary}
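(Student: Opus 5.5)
The plan is to read off both statements from the long exact cohomology sequences of the higher degree residue sequences in \cref{prop:ExactSequencePole}, essentially redoing the argument behind \cref{prop:CohomologyInequality} but keeping track of which terms vanish.

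For each \(1\leqslant k\leqslant m\), taking global sections in \cref{prop:ExactSequencePole} gives an exact sequence
\[0\to H^0(X,S^m\Omega_X^1(\log D)_{(k-1),D_c})\to H^0(X,S^m\Omega_X^1(\log D)_{(k),D_c})\to H^0(D_c,S^{m-k}\Omega_X^1(\log D')|_{D_c}).\]
Hence
\[h^0(S^m\Omega^1_X(\log D)_{(k),D_c})\leqslant h^0(S^m\Omega^1_X(\log D)_{(k-1),D_c})+h^0(D_c,S^{m-k}\Omega^1_X(\log D')|_{D_c}).\]
For \(1\leqslant k<m\) the hypothesis (with \(i=k\)) says the right-hand term vanishes. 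Then the inclusion \(H^0(\cdot)_{(k-1)}\hookrightarrow H^0(\cdot)_{(k)}\) is an isomorphism, not merely an inequality.

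Inducting on \(k\), starting from \(S^m\Omega_X^1(\log D)_{(0),D_c}=S^m\Omega_X^1(\log D')\), gives
\[h^0(X,S^m\Omega_X^1(\log D)_{(k),D_c})=h^0(X,S^m\Omega_X^1(\log D'))\]
for all \(0\leqslant k<m\). This is the second assertion.

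For \(k=m\), use \(S^m\Omega_X^1(\log D)_{(m),D_c}=S^m\Omega_X^1(\log D)\) and the fact that the quotient is \(S^0\Omega^1_X(\log D')|_{D_c}=\mathcal{O}_{D_c}\). Since \(D_c\) is a connected compact component (a prime divisor on a projective manifold), \(h^0(\mathcal{O}_{D_c})=1\). The same exact sequence then gives
\[h^0(S^m\Omega_X^1(\log D))\leqslant h^0(S^m\Omega_X^1(\log D)_{(m-1),D_c})+1=h^0(S^m\Omega^1_X(\log D'))+1,\]
which is the first assertion. (For \(m=1\) the hypothesis is empty and only this last step occurs.)

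The ``in particular'' clause follows by substituting \(h^0(X,S^m\Omega_X^1(\log D'))=0\) into both formulas.

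There is no real obstacle. The only points needing care are that the hypothesis is used exactly for \(1\leqslant i<m\) and not for \(i=m\), and the fact \(h^0(\mathcal{O}_{D_c})=1\). This uses that \(D_c\) is irreducible and reduced, hence connected, as a component of a simple normal crossing divisor.
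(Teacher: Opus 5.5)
Your proposal is correct and follows essentially the paper's route: the paper obtains the corollary from Proposition \ref{prop:CohomologyInequality}, i.e.\ by iterating the long exact cohomology sequences of the higher degree residue sequences in Proposition \ref{prop:ExactSequencePole}, together with the observation that for $i=m$ the quotient is $\mathcal{O}_{D_c}$ with $h^0=1$. Your remark that the injections $H^0(X,S^m\Omega_X^1(\log D)_{(k-1),D_c})\hookrightarrow H^0(X,S^m\Omega_X^1(\log D)_{(k),D_c})$ are isomorphisms for $1\leqslant k<m$ supplies the reverse inequality needed for the stated equality, a step the paper leaves implicit.
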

To obtain a proper vanishing, one can compute more precisely the coboundary map. Here we will do this  only in the one component case,  i.e. when  \(c=1\). In that case one has  \(D'=\varnothing\) and \(D_c=D\) and  for simplicity, we will use the notation \(S^m\Omega_X^1(\log D)_{(k)}:=S^m\Omega_X^1(\log D)_{(k),D}\).
\begin{lemma} \label{lemma}
     Suppose that \(D\) is a smooth irreducible divisor on \(X\). Let \(m>0\). Suppose that \(c_1(D)|_D\neq 0\). Consider the case \(k=m\) in the exact sequence given by Proposition \ref{prop:ExactSequencePole}.
    \[0\to S^m\Omega_X^1(\log D)_{(m-1)}\to S^m\Omega_X^1(\log D)\to \mathcal{O}_{D}\to 0.\]
    Then the induced coboundary map 
    \[\delta:H^0(D,\mathcal{O}_D)\to H^1(X,S^m\Omega_X^1(\log D)_{(m-1)})\]
    is non-zero, hence injective. In particular, we have
    \[H^0(X,S^m\Omega_X^1(\log D)_{(m-1)})= H^0(X,S^m\Omega_X^1(\log D)).\]
\end{lemma}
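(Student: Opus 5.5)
The plan is to compute the coboundary \(\delta(1)\) explicitly in Čech cohomology, push it down to a cohomology group on \(D\) where it becomes a nonzero multiple of the first Chern class of the normal bundle, and conclude from \(c_1(D)|_D\neq 0\).

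First, cover \(X\) by open sets \(U_i\) on which \(D\) is given by \((\sigma_i=0)\), with \(\sigma_i=g_{ij}\sigma_j\) on \(U_{ij}\) and \(g_{ij}\in\mathcal{O}(U_{ij})^*\). These \(g_{ij}\) are the transition functions of \(\mathcal{O}_X(D)\). The constant section \(1\in H^0(D,\mathcal{O}_D)\) lifts locally to \(\left(\frac{d\sigma_i}{\sigma_i}\right)^m\in S^m\Omega_X^1(\log D)(U_i)\): by the definition of \(\rho\) in Proposition \ref{prop:ExactSequencePole} (with \(k=m\), \(\alpha=1\)) its image is \(1\). Hence \(\delta(1)\) is represented by the cocycle
\[\left(\frac{d\sigma_i}{\sigma_i}\right)^m-\left(\frac{d\sigma_j}{\sigma_j}\right)^m=\sum_{\ell=0}^{m-1}\binom{m}{\ell}\left(\frac{d\sigma_j}{\sigma_j}\right)^\ell\left(\frac{dg_{ij}}{g_{ij}}\right)^{m-\ell},\]
exactly as in the gluing computation in the proof of Proposition \ref{prop:ExactSequencePole}. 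This cocycle has values in \(S^m\Omega_X^1(\log D)_{(m-1)}\).

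Next, I will not try to show directly that this class is nonzero in \(H^1(X,S^m\Omega_X^1(\log D)_{(m-1)})\). Instead I will compose \(\delta\) with the maps
\[H^1(X,S^m\Omega^1_X(\log D)_{(m-1)})\to H^1(D,\Omega_X^1|_D)\to H^1(D,\Omega_D^1).\]
The first map is induced by the residue quotient of Proposition \ref{prop:ExactSequencePole} with \(k=m-1\), whose target is \(S^{1}\Omega_X^1|_D\) since \(D'=\varnothing\). When \(m=1\) this map is simply restriction. The second map is restriction of forms to \(D\). Under the first map, every term with \(\ell<m-1\) has pole order at most \(m-2\) and so dies. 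The term \(\ell=m-1\) has \(\alpha=m\,\frac{dg_{ij}}{g_{ij}}\), so the image of \(\delta(1)\) is the class of \(\bigl(m\,\frac{dg_{ij}}{g_{ij}}\big|_D\bigr)\). In \(H^1(D,\Omega^1_D)\) the cocycle \(\bigl(d\log(g_{ij}|_D)\bigr)\) represents \(c_1(\mathcal{O}_X(D)|_D)\) up to the usual nonzero constant \(2\pi i\). So the composite sends \(1\) to \(2\pi i\, m\, c_1(D)|_D\), viewed in \(H^1(D,\Omega_D^1)\subset H^2(D,\CC)\) by Hodge theory. This is nonzero by hypothesis, since we are in characteristic \(0\) and \(m>0\); therefore \(\delta(1)\neq 0\).

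Finally, \(D\) is smooth, irreducible and compact, so \(H^0(D,\mathcal{O}_D)=\CC\). A nonzero linear map out of a one-dimensional space is injective. The long exact cohomology sequence of
\[0\to S^m\Omega_X^1(\log D)_{(m-1)}\to S^m\Omega_X^1(\log D)\to\mathcal{O}_D\to 0\]
then shows that \(H^0(X,S^m\Omega_X^1(\log D))\to H^0(D,\mathcal{O}_D)\) is zero. This gives the equality \(H^0(X,S^m\Omega_X^1(\log D)_{(m-1)})=H^0(X,S^m\Omega_X^1(\log D))\).

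The step I expect to need the most care is the middle one. One must check that the residue map \(\rho\) for \(k=m-1\) really extracts exactly the coefficient \(m\,dg_{ij}/g_{ij}\) from the cocycle, with all lower pole-order terms vanishing and no hidden dependence on the choice of local equations. One must also check that the composite maps on cohomology are compatible with the connecting homomorphism.
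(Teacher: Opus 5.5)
Your argument is correct and is essentially the paper's proof: lift \(1\) to \(\left(d\sigma_i/\sigma_i\right)^m\), expand the \v{C}ech cocycle, and apply the residue map of Proposition~\ref{prop:ExactSequencePole} for \(k=m-1\) to get the class of \(\left(m\,dg_{ij}/g_{ij}\right)\), i.e.\ \(m\,c_1(D)|_D\). The only difference is that the paper stops in \(H^1(D,\Omega_X^1|_D)\), reads the hypothesis there, and treats \(m=1\) separately via the ordinary residue sequence; your further projection to \(H^1(D,\Omega_D^1)\) uses the slightly stronger but standard reading \(c_1(\mathcal{O}_D(D))\neq 0\), which is exactly what the later applications (\(D\) ample, or \(D^2\neq 0\) on a surface) provide.
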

\begin{proof} For \(m=1\), the assertion follows directly from the usual residue sequence \[ 0\longrightarrow \Omega_X^1 \longrightarrow \Omega_X^1(\log D) \longrightarrow \mathcal O_D \longrightarrow 0, \] since the image of \(1\in H^0(D,\mathcal O_D)\) under the connecting morphism is the class \(c_1(\mathcal O_X(D))\), whose image in \(H^1(D,\Omega_X^1|_D)\) is nonzero by hypothesis. We may therefore assume \(m\geq 2\).

\

We will prove this by computing the coboundary map in Cech cohomology. Take a sufficiently refined cover \(\mathfrak{U}=(U_\alpha)_{\alpha\in \Lambda}\) such that on each chart \(U_\alpha\) the divisor \(D\) is given by an element \(\sigma_\alpha\in \mathcal{O}(U_\alpha).\) Let us denote the induced transition functions by \(g_{\alpha,\beta}\in \mathcal{O}(U_{\alpha,\beta})^*\) so that for every \(\alpha,\beta\in \Lambda\) one has
\[\sigma_\beta=g_{\alpha,\beta}\sigma_\alpha.\]
Since \(h^0(D,\mathcal{O}_D)=1\) it suffices to compute \(\delta(1)\). Over \(U_\alpha\) one can lift \(1\in \mathcal{O}_D(U_\alpha)\) to \[\left(\frac{d\sigma_\alpha}{\sigma_\alpha}\right)^m\in S^m\Omega_X^1(\log D)(U_\alpha).\] Therefore in Cech cohomology the cycle representing \(\delta(1)\) is given by \((\tau_{\alpha, \beta})_{\alpha,\beta\in \Lambda}\) where
\[\tau_{\alpha,\beta}=\left(\frac{d\sigma_\beta}{\sigma_\beta}\right)^m-\left(\frac{d\sigma_\alpha}{\sigma_\alpha}\right)^m.\]
But one has 
\[\frac{d\sigma_\beta}{\sigma_\beta}=\frac{d(g_{\alpha,\beta}\sigma_\alpha)}{g_{\alpha,\beta}\sigma_\alpha}=\frac{dg_{\alpha,\beta}}{g_{\alpha,\beta}}+\frac{d\sigma_\alpha}{\sigma_\alpha}.\]
Therefore 
\begin{eqnarray*}
\tau_{\alpha,\beta}&=&\left(\frac{d\sigma_\beta}{\sigma_\beta}\right)^m-\left(\frac{d\sigma_\alpha}{\sigma_\alpha}\right)^m=\left(\frac{dg_{\alpha,\beta}}{g_{\alpha,\beta}}+\frac{d\sigma_\alpha}{\sigma_\alpha}\right)^m-\left(\frac{d\sigma_\alpha}{\sigma_\alpha}\right)^m\\
&=&\left(\frac{d\sigma_\alpha}{\sigma_\alpha}\right)^m+\sum_{k=0}^{m-1}\binom{m}{k}\left(\frac{dg_{\alpha,\beta}}{g_{\alpha,\beta}}\right)^{m-k}\left(\frac{d\sigma_\alpha}{\sigma_\alpha}\right)^k-\left(\frac{d\sigma_\alpha}{\sigma_\alpha}\right)^m\\
&=& \sum_{k=0}^{m-1}\binom{m}{k}\left(\frac{dg_{\alpha,\beta}}{g_{\alpha,\beta}}\right)^{m-k}\left(\frac{d\sigma_\alpha}{\sigma_\alpha}\right)^k\\
&=& m\frac{dg_{\alpha,\beta}}{g_{\alpha,\beta}}\left(\frac{d\sigma_\alpha}{\sigma_\alpha}\right)^{m-1}  +\sum_{k=0}^{m-2}\binom{m}{k}\left(\frac{dg_{\alpha,\beta}}{g_{\alpha,\beta}}\right)^{m-k}\left(\frac{d\sigma_\alpha}{\sigma_\alpha}\right)^k.
\end{eqnarray*}
Now applying Proposition \ref{prop:ExactSequencePole} to \(k=m-1\) yields an exact sequence 
\[0\to S^m\Omega_X^1(\log D)_{(m-2)}\to S^m\Omega_X^1(\log D)_{(m-1)}\to \Omega_X^1|_D\to 0,\]
which induces in cohomology the map 
\[\rho:H^1(X,S^m\Omega_X^1(\log D)_{(m-1)})\to H^1(D,\Omega_X^1|_D).\]
But the explicit description given by Proposition \ref{prop:ExactSequencePole} and the above description of \(\delta(1)\) implies that in Cech cohomology, \(\rho(\delta(1))\) is given by the cocycle
\[\left(m\frac{dg_{\alpha,\beta}}{g_{\alpha,\beta}}\right)_{\alpha,\beta\in \Lambda},\]
which corresponds to the class
\[mc_1(D)\in H^1(D,\Omega_X^1|_D)\]
since the cocycle \(\left(\frac{dg_{\alpha,\beta}}{g_{\alpha,\beta}}\right)_{\alpha,\beta\in \Lambda}\) is a representative of \(c_1(D)\in H^1(X,\Omega_X^1).\)
But by hypothesis, the class \(\rho(\delta(1))=mc_1(D)\in H^1(D,\Omega_X^1|_D)\) is non-zero. In particular \(\delta(1)\) is non-zero, whence the result. The announced equality then follows at once by considering the long exact sequence associated in cohomology.

\end{proof}
From this one has the following result.
\begin{corollary}\label{cor:VanishingLog}
    Let \(X\) be a projective manifold and \(D\) be an irreducible smooth  divisor such that \(c_1(D)|_D\neq 0\) (e.g. if \(D\) is ample on \(X\)). For any \(m\geqslant 1\), if \(h^0(D,S^{m-k}\Omega_X^1|_D)=0\) for all \(1\leqslant k< m\), then
    \[h^0(X,S^m\Omega_X^1)= h^0(X,S^{m}\Omega_X^1(\log D)).\]  
    In particular, if \(X\) is moreover rationally connected, then \(h^0(X,S^{m}\Omega_X^1(\log D))=0\).
\end{corollary}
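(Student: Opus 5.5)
The plan is to combine Corollary~\ref{cor:VanishingLogGeneral} (in the case $c=1$, so $D'=\varnothing$ and $D_c=D$) with Lemma~\ref{lemma}. We start from the filtration $S^m\Omega_X^1(\log D)_{(\bullet)}$, whose graded pieces are described by Proposition~\ref{prop:ExactSequencePole}:
\[0\to S^m\Omega_X^1(\log D)_{(k-1)}\to S^m\Omega_X^1(\log D)_{(k)}\to S^{m-k}\Omega_X^1|_D\to 0 .\]
Here we use that $\Omega_X^1(\log\varnothing)=\Omega_X^1$ and that $S^m\Omega_X^1(\log D)_{(0)}=S^m\Omega_X^1$.

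\textbf{Step 1 (the filtration up to level $m-1$).} For $1\leqslant k\leqslant m-1$, the long exact sequence in cohomology gives
\[H^0(S^m\Omega_X^1(\log D)_{(k-1)})\hookrightarrow H^0(S^m\Omega_X^1(\log D)_{(k)})\to H^0(D,S^{m-k}\Omega_X^1|_D).\]
By hypothesis the last group vanishes for these $k$, so the first map is an isomorphism. Inducting on $k$ gives
\[H^0(X,S^m\Omega_X^1)=H^0(X,S^m\Omega_X^1(\log D)_{(m-1)}).\]
This is exactly the second assertion of Corollary~\ref{cor:VanishingLogGeneral}, but with equality of the spaces themselves rather than just of their dimensions.

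\textbf{Step 2 (the top step $k=m$).} The quotient at this step is $\mathcal{O}_D$, which has a nonzero section, so the hypothesis says nothing here. This is the only real obstacle, and it is exactly what Lemma~\ref{lemma} handles: since $D$ is smooth and irreducible with $c_1(D)|_D\neq 0$, the coboundary $\delta\colon H^0(D,\mathcal{O}_D)\to H^1(X,S^m\Omega_X^1(\log D)_{(m-1)})$ is injective. Hence
\[H^0(X,S^m\Omega_X^1(\log D)_{(m-1)})=H^0(X,S^m\Omega_X^1(\log D)).\]
Chaining this with Step~1 gives the claimed equality $h^0(X,S^m\Omega_X^1)=h^0(X,S^m\Omega_X^1(\log D))$.

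We should also confirm the parenthetical claim that ample $D$ satisfies the hypothesis. If $D$ is ample, then $D|_D$ is ample on $D$, so $c_1(D)|_D\in H^1(D,\Omega^1_D)$ is nonzero when $\dim D\geqslant 1$. Its image under the restriction $\Omega_X^1|_D\to\Omega_D^1$ is this class, so $c_1(D)|_D\in H^1(D,\Omega_X^1|_D)$ is nonzero as well.

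\textbf{Step 3 (the rationally connected case).} If $X$ is rationally connected, then $H^0(X,S^m\Omega_X^1)=0$ for all $m\geqslant 1$. To see this, take a very free rational curve $f\colon\PP^1\to X$ through a general point. Then $f^*\Omega_X^1$ is a sum of line bundles of negative degree, so every symmetric power $f^*S^m\Omega_X^1$ has no sections. Since such curves cover a dense open set, any global symmetric differential vanishes there, hence vanishes identically. Combined with the equality above, this gives $h^0(X,S^m\Omega_X^1(\log D))=0$.
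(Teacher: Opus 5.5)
Your proof is correct and follows the paper's route. The paper likewise gets $H^0(X,S^m\Omega_X^1)=H^0(X,S^m\Omega_X^1(\log D)_{(m-1)})$ from the graded pieces of the pole-order filtration (via Proposition~\ref{prop:CohomologyInequality}), uses Lemma~\ref{lemma} for the top step, and deduces the rationally connected case from the vanishing of $H^0(X,S^m\Omega_X^1)$. Your checks of the ample example (valid for $\dim X\geqslant 2$) and of that vanishing via very free curves fill in details the paper leaves implicit.
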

\begin{proof}
    This is a consequence of \cref{lemma} and \cref{prop:CohomologyInequality}. The last statement follows because \(h^0(X,S^m\Omega_X^1)=0\) for all \(m>0\) if \(X\) is rationally connected. 
\end{proof}
As an application, we prove some vanishing results that we shall need later. First, we show how we can easily derive a  special case of a classical result by Brückmann and Rackwitz.

\begin{corollary}\label{cor:VanishingP2}
    Let \(D\) be a smooth divisor in \(\PP^2\), then \(h^0(\PP^2,S^m\Omega_{\PP^2}^1(\log D))=0\) for all \(m>0\).
\end{corollary}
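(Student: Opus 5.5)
The plan is to apply \cref{cor:VanishingLog} with \(X=\PP^2\), which is rationally connected. A smooth divisor \(D\subset\PP^2\) is a smooth plane curve of some degree \(e\geqslant 1\). It is therefore irreducible, since two components of a plane curve always meet and the curve would then be singular. It is also ample, so \(c_1(D)|_D\neq 0\): its degree on \(D\) is \(D^2=e^2>0\). The corollary then reduces the statement to checking the hypothesis
\[
h^0\bigl(D,S^{j}\Omega^1_{\PP^2}|_D\bigr)=0\quad\text{for all }1\leqslant j\leqslant m-1,
\]
after which \(h^0(\PP^2,S^m\Omega^1_{\PP^2}(\log D))=h^0(\PP^2,S^m\Omega^1_{\PP^2})=0\).

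To verify this vanishing, I would use the Euler sequence restricted to \(D\),
\[
0\to \Omega^1_{\PP^2}|_D\to \mathcal{O}_D(-1)^{\oplus 3}\to \mathcal{O}_D\to 0 .
\]
This exhibits \(\Omega^1_{\PP^2}|_D\) as a subbundle of \(\mathcal{O}_D(-1)^{\oplus 3}\). Taking symmetric powers of a subbundle gives an injection
\[
S^j\Omega^1_{\PP^2}|_D\hookrightarrow S^j\bigl(\mathcal{O}_D(-1)^{\oplus 3}\bigr)=\mathcal{O}_D(-j)^{\oplus\binom{j+2}{2}} .
\]
Since \(\mathcal{O}_D(-j)\) has negative degree \(-je\) on the irreducible curve \(D\), it has no nonzero sections for \(j\geqslant 1\). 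Hence \(H^0(D,S^j\Omega^1_{\PP^2}|_D)=0\), which is exactly the required hypothesis.

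No step is a serious obstacle. The only points needing care are justifying irreducibility of a smooth plane curve, which is needed because \cref{cor:VanishingLog} requires \(D\) irreducible, and checking that symmetric powers preserve injectivity for a subbundle. The latter holds because the quotient \(\mathcal{O}_D\) is locally free, so locally the inclusion splits and so does its symmetric power. Alternatively, one could note that \(\Omega^1_{\PP^2}(1)\) is a quotient of a trivial bundle, so \(\Omega^1_{\PP^2}\) is a subsheaf of \(\mathcal{O}(-1)^{\oplus 3}\); either way the vanishing of sections on \(D\) follows.
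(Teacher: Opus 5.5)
Your proposal is correct and follows the paper's proof essentially verbatim: you reduce via \cref{cor:VanishingLog} (using that $\PP^2$ is rationally connected) to the vanishing of $h^0(D,S^{j}\Omega^1_{\PP^2}|_D)$ for $1\leqslant j<m$, and you obtain that vanishing from the Euler-sequence embedding $\Omega^1_{\PP^2}\hookrightarrow\mathcal{O}_{\PP^2}(-1)^{\oplus 3}$. Your added checks (irreducibility and ampleness of $D$, and injectivity of symmetric powers of a subbundle) make explicit the hypotheses that the paper uses without comment.
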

\begin{proof}
    By the above corollary, it suffices to prove that \(h^0(D,S^{m-k}\Omega_{\PP^2}^1|_D)=0\) for all \(1\leqslant k <m.\) By Euler's exact sequence \(\Omega_{\PP^2}^1\) is a subsheaf of \(\mathcal{O}_{\PP^2}(-1)^{\oplus 3}\) which is the dual of an ample vector bundle. In particular, for all \(0\leqslant k< m\),  \[h^0(D,S^{m-k}\Omega_{\PP^2}^1|_D)\leqslant h^0(D,S^{m-k}\mathcal{O}_{D}(-1)^{\oplus 3})=0.\]  
\end{proof}
More interestingly we prove a vanishing result on Hirzebruch surfaces. We recall some facts about Hirzebruch surfaces $\FF_d$, $d\geqslant 0$, which are the $\PP^1$-bundles over $\PP^1$, $\FF_d=\PP({\cal O}\oplus {\cal O}(d))$. Let $p:\FF_d \to \PP^1$ be the natural projection (the case $\FF_0=\PP^1\times \PP^1$ has two natural projections). The Neron-Severi group is

 $$NS(\FF_d)=\ZZ F\oplus \ZZ \Delta$$

 \noindent with $F$ the (numerical) class of the fibers of $p$ and $\Delta$ the class of a section of $p$ with $\Delta^2=d$. The intersection pairing is determined by 

 $$\Delta^2=d, \hspace {.5in} \Delta\cdot F=1, \hspace {.5in} F^2=0$$

 \noindent Set $\Delta_0=\Delta-dF$, which is the class of the unique section of $p$ with $\Delta_0^2=-d$, note that  $\Delta\cdot\Delta_0=0$. The following holds

 \begin{align}\label{can}
K_{\FF_d}=-2\Delta+(d-2)F \\ c_1(\Omega^1_{\FF_d/\PP^1})=-2\Delta+dF
\end{align}

 The first identity easily follows from adjunction for F and $\Delta$, while the second will then follow from:

\begin{align}\label{relseq}
0\to p^*\Omega^1_{\PP^1}\to \Omega^1_{\FF_d}\to \Omega^1_{\FF_d/\PP^1}\to 0
\end{align}

In Hartshorne V.2.18, there is a description of the possible classes in $NS(\FF_d)$ having irreducible, smooth and ample divisors of $\FF_d$,

\begin{align}\label{irred}\exists\, D \in \lvert a\Delta_0 + bF \rvert \ \text{irreducible or smooth}
\quad \Longleftrightarrow \quad
\begin{cases}
a = 1,\ b = 0,\\[4pt]
a = 0,\ b = 1,\\[4pt]
a > 0,\ b \geqslant ad \quad \text{if } d \geqslant 1,\\[4pt]
a > 0,\ b > 0 \quad \text{if } d = 0.
\end{cases}
\end{align}

The classes of ample divisors are the same as those of very ample divisors and require $a>0$ and $b>ad$.

\

\begin{theorem}\label{prop:VanishingFd1Comp} Let $D$ be a smooth and irreducible divisor of an Hirzebruch surface $\FF_d$. Then for $m\ge 1$

$$h^0(\FF_d, S^m\Omega^1_{\FF_d}(\log D))=0.$$

\end{theorem}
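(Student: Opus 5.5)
The plan is to split according to the class of $D$ in the list \eqref{irred}. In the generic case I would apply \cref{cor:VanishingLog}. The two degenerate cases, $D$ a fiber or $D=\Delta_0$, are exactly those where a hypothesis of that corollary fails; for them I would use a relative log version of \eqref{relseq} instead.

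\emph{Generic case: $D\in|a\Delta_0+bF|$ with $a>0$, and $b\geqslant ad$ if $d\geqslant1$, or $b>0$ if $d=0$.} \cref{cor:VanishingLog} needs two inputs: $c_1(D)|_D\neq0$, and $h^0(D,S^j\Omega^1_{\FF_d}|_D)=0$ for $1\leqslant j<m$.

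For the first input, compose with $H^1(D,\Omega^1_{\FF_d}|_D)\to H^1(D,\Omega^1_D)\simeq\CC$. The image is $\deg \mathcal O(D)|_D=D^2=2ab-a^2d$, which is positive under these conditions, so $c_1(D)|_D\neq0$.

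For the second input, restrict \eqref{relseq} to $D$. This filters $S^j\Omega^1_{\FF_d}|_D$ with graded pieces $\big(p^*\mathcal O_{\PP^1}(-2)\big)^{\otimes i}\otimes(\Omega^1_{\FF_d/\PP^1})^{\otimes (j-i)}|_D$ for $0\leqslant i\leqslant j$. Using $F\cdot D=a$ and $(-2\Delta+dF)\cdot(a\Delta_0+bF)=da-2b$, the degree of the $i$-th piece on $D$ is $-2ai+(j-i)(da-2b)$. Since $a>0$ and $da-2b<0$ in every generic subcase, this is negative for $j\geqslant1$. So every graded piece has no sections, hence neither does $S^j\Omega^1_{\FF_d}|_D$. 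Then \cref{cor:VanishingLog} together with rational connectedness of $\FF_d$ gives the vanishing.

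\emph{Degenerate cases: $D=F$ a fiber, or $D=\Delta_0$.} When $d=0$ these two coincide up to the other ruling, so it suffices to treat them separately. For a fiber, $c_1(F)|_F=0$: it comes from $p^*\Omega^1_{\PP^1}$, whose restriction has $H^1(F,\mathcal O_F)=0$. For $D=\Delta_0$ with $d\geqslant1$, the restriction $\Omega^1|_{\Delta_0}$ contains the conormal bundle $\mathcal O(d)$, so the second hypothesis fails. For both I would use log relative sequences.

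For $D=F$ the sequence is
\[0\to p^*\Omega^1_{\PP^1}(\log pt)\to\Omega^1_{\FF_d}(\log F)\to\Omega^1_{\FF_d/\PP^1}\to0 .\]
For $D=\Delta_0$, which is a section, it is
\[0\to p^*\Omega^1_{\PP^1}\to\Omega^1_{\FF_d}(\log \Delta_0)\to\Omega^1_{\FF_d/\PP^1}(\Delta_0)\to0 .\]
The quotient in the second sequence has class $-2\Delta+dF+\Delta_0=-\Delta$, of degree $-1$ on fibers.

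Take the induced filtration of $S^m$. Every graded piece containing a positive power of the relative term has negative degree on the fibers of $p$, hence has no global sections. The remaining piece is $p^*\mathcal O_{\PP^1}(-m)$, respectively $p^*\mathcal O_{\PP^1}(-2m)$, which also has none. Therefore $h^0=0$.

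The main obstacle is the case $D=\Delta_0$. There the general corollary genuinely fails, because $\Omega^1_{\FF_d}|_{\Delta_0}$ has sections. The key point is to see that the relative twisted log sheaf $\Omega^1_{\FF_d/\PP^1}(\Delta_0)$ is still negative on fibers. The remaining steps are the degree computations above.
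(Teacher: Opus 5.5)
Your proof is correct. For $D$ neither a fiber nor $\Delta_0$, you argue as the paper does: Corollary \ref{cor:VanishingLog}, with the degree computation $-2ai+(j-i)(da-2b)<0$ on $D$ and $D^2>0$. For $D=F$ you also follow the paper: the logarithmic relative sequence, whose graded pieces are negative on fibers or equal to $p^*\mathcal{O}_{\PP^1}(-m)$.

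The genuine difference is the case $D=\Delta_0$. The paper argues geometrically. It contracts $\Delta_0$ to the vertex of the cone over a rational normal curve, so hyperplane sections missing the vertex give rational curves in $\FF_d\setminus\Delta_0$. These pass through every point in almost every tangent direction. A log form restricts on each such curve to a holomorphic symmetric differential on $\PP^1$, hence vanishes, and so the form is zero.

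You instead use the exact sequence $0\to p^*\Omega^1_{\PP^1}\to\Omega^1_{\FF_d}(\log\Delta_0)\to\Omega^1_{\FF_d/\PP^1}(\Delta_0)\simeq\mathcal{O}_{\FF_d}(-\Delta)\to 0$. The quotient is locally free because $\Delta_0$ is a section, so $dt,\,dy/y$ is a local frame. The graded pieces of its symmetric powers are $\mathcal{O}_{\FF_d}(-2(m-i)F-i\Delta)$: for $i\geqslant 1$ they are negative on fibers, and for $i=0$ the piece is $p^*\mathcal{O}_{\PP^1}(-2m)$.

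What each route buys:
\begin{itemize}
\item Your route is uniform with the fiber case, purely cohomological, and needs no cone model. It is exactly the middle row of the diagram the paper itself uses in the proof of Proposition \ref{prop:VanishingFd2Comp}, so that sequence does double duty.
\item Your sequence works verbatim for $d=0$ and for any section of $p$ (the quotient always has fiber degree $-1$). So the reduction via the other ruling, which you phrase somewhat confusingly, is not even needed.
\item The paper's rational-curve argument is more conceptual. It uses only that the open surface is swept out by compact rational curves in generic directions, so it applies to log pairs where no convenient fibration-adapted sequence is available.
\end{itemize}
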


\begin{proof} There are three distinct cases to consider, D is not a fiber or $\Delta_0$, $D=F$ is a fiber and $D=\Delta_0$. 


\ 

\noindent Case: $D$ is not a fiber or $\Delta_0$.

\

By  \cref{cor:VanishingLog}, it suffices to prove that \(h^0(D,S^{m-k}\Omega_{\FF_d}^1|_D)=0\) for all \(1\leqslant k <m.\)

\

From the exact sequence \eqref{relseq}, and the fact that \(p^*\Omega_{\PP^1}^1\equiv -2F\) and \(\Omega_{\FF_d/\PP^1}^1\equiv -2\Delta +dF\) we see that 
    \[
    h^0(D,S^{m-k}\Omega_{\FF_d}^1|_D)\leqslant \sum_{\substack{p+q=m-k\\ p,q\geqslant 0}}h^0(D,p^*\Omega_{\PP^1}^{\otimes p}\otimes \Omega_{\FF_d/\PP^1}^{\otimes q})=\sum_{\substack{p+q=m-k\\ p,q\geqslant 0}}h^0(D,
    \mathcal{O}_{D}(-2pF+q(-2\Delta+dF)).
    \]
    Therefore, it suffices to prove the vanishing on the right hand side, which follows from the fact that \(\deg\mathcal{O}_{D}(-2pF+q(-2\Delta+dF))<0 \). Indeed, from the hypothesis, it follows that \(D\in |\mathcal{O}_{\FF_d}(a\Delta_0+bF)|\) with \(a>0\), \(b>0\) and \(b\geqslant ad\)
    \begin{eqnarray*}
        \deg\mathcal{O}_{D}(-2pF+q(-2\Delta+dF))&=&  D\cdot(-2pF+q(-2\Delta+dF))=(a\Delta_0+bF)\cdot ((dq-2p)F-2q\Delta)\\
        &=&a(dq-2p)-2bq =(ad-2b)q-2ap<0.
    \end{eqnarray*}
    In particular, from corollary \ref{cor:VanishingLog}, the announced vanishing follows if \(D\) is moreover ample. If \(D\) is irreducible and not ample, then \(b=ad\) (and also \(d\geqslant 1\)), hence \(D\cdot D= a^2(\Delta_0+dF)^2=a^2\Delta^2=a^2d\neq 0\). In particular \(c_1(D)|_D\neq 0\) and the desired vanishing follows again from corollary \ref{cor:VanishingLog}. 

    \

\noindent Case: $D=F$ is a fiber.

    \ 

    Since $F$ is the inverse image of the reduced divisor $p$, there is a logarithmic relative cotangent sequence

 $$0\to p^*\Omega^1_{\PP^1}(p)\to \Omega^1_{\FF_d}(\log F)\to \Omega^1_{\FF_d/\PP^1}\to 0$$

From the exact sequence follows a filtration for $S^m\Omega^1_{\FF_d}(\log F)$ whose graded pieces are (using $\Omega^1_{\PP^1}(p)\simeq \mathcal{O}_{\mathbb{P}^1}(-1)$):

\begin{align*} \operatorname{gr}_i &\simeq \bigl( \pi^*\mathcal{O}_{\mathbb{P}^1}(-1) \bigr)^{\otimes(m-i)} \otimes \bigl( \Omega_{\FF_d/\mathbb{P}^1} \bigr)^{\otimes i} \\ &\simeq \mathcal{O}_{\FF_d} \bigl(-2i\Delta_0-(id+m-i)f\bigr), \qquad 0\leq i\leq m. \end{align*}

For \(i=0\), we have \[ \operatorname{gr}_0 \simeq \pi^*\mathcal{O}_{\mathbb{P}^1}(-m), \] and hence \[ H^0(\FF_d,\operatorname{gr}_0)=0 \qquad (m>0). \]

\ 

For \(i>0\), restriction to a general fiber \(F\simeq\mathbb{P}^1\) gives \[ \left.\operatorname{gr}_i\right|_F \simeq \mathcal{O}_{\mathbb{P}^1}(-2i). \] Consequently, \[ H^0(\FF_d,\operatorname{gr}_i)=0. \] Indeed, a nonzero global section would restrict nontrivially to a general fiber, which is impossible because the restricted line bundle has negative degree. Since every graded piece has no nonzero global sections, induction along the filtration gives \[ H^0\!\left( \FF_d,S^m\Omega_{\FF_d}(\log F) \right)=0 \qquad\text{for every }m>0.\]

 \

 \noindent Case: $D=\Delta_0$

 \ 

We observe that we only need to consider case $d\ge 1$, since case $d=0$ is covered by $D=F$ ($\Delta_0=F$ for $d=0$). We have that \(\FF_d\setminus \Delta_0\) is covered by rational curves, and that moreover through any point \(x\in \FF_d\setminus \Delta_0\) and every tangent direction \(\xi\in T_{\FF_d,\xi}\) which is not tangent to the fiber of \(\pi:\FF_d\to \PP^1\) passing through \(x\), there exists a rational curve in \(\FF_d\setminus \Delta_0\) passing through \(x\) and tangent to \(\xi\). To see this, one must recall that there is a map \(\varphi : \FF_d\to \PP^n\) which contracts \(\Delta_0\) and whose image is the cone over the rational normal curve of degree \(d\), such that \(\varphi(\Delta_0)\) is the vertex of the cone.  Therefore, any hyperplane section not passing through the vertex will induce a rational curve on \(\FF_d\setminus \Delta_0\), from which the observation follows easily. 
    But since \(h^0(\PP^1,S^m\Omega_{\PP^1})=0,\) we obtain that for any rational curve \(f:\PP^1\to \FF_d\setminus \Delta_0\) and for any \(\omega\in H^0(\FF_d,S^m\Omega_{\FF_d}^1(\log \Delta_0))\), the restriction of \(f^*\omega=0\). Because the rational curves cover \(\FF_d\setminus\Delta_0\) and almost every tangent direction, we see that \(\omega=0\), thus \(h^0(\FF_d,S^m\Omega_{\FF_d}^1(\log(\Delta_0)))=0\).

\end{proof}
In the case of two components, one has the following  statement.

\

\begin{proposition}\label{prop:VanishingFd2Comp}
    Let \(D\subset \FF_d\) be a smooth irreducible divisor such that $D^2\neq 0$ and \(\Delta_0+D\) is a simple normal crossing divisor. Then, for any \(m>0\) one has
    \[h^0(\FF_d,S^m\Omega_{\FF_d}^1(\log(\Delta_0+D)))\leqslant 1\quad \text{and}\quad h^0(\FF_d,S^m\Omega_{\FF_d}^1(\log(\Delta_0+D))_{(m-1),D})=0.\]
\end{proposition}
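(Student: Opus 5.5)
We will apply \cref{cor:VanishingLogGeneral} with \(D'=\Delta_0\) and \(D_c=D\). This reduces the statement to two inputs. The first is \(h^0(\FF_d,S^m\Omega_{\FF_d}^1(\log \Delta_0))=0\) for all \(m>0\), which is exactly the case \(D=\Delta_0\) of \cref{prop:VanishingFd1Comp}. The second is
\[h^0\bigl(D,S^{j}\Omega_{\FF_d}^1(\log \Delta_0)|_D\bigr)=0\quad\text{for all }1\leqslant j\leqslant m-1.\]
Once both are established, \cref{cor:VanishingLogGeneral} gives \(h^0(\FF_d,S^m\Omega^1_{\FF_d}(\log(\Delta_0+D)))\leqslant 1\) and \(h^0(\FF_d,S^m\Omega^1_{\FF_d}(\log(\Delta_0+D))_{(k),D})=0\) for all \(k<m\), in particular for \(k=m-1\). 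So the work is in the second input.

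To get it, we use a logarithmic version of the relative cotangent sequence \eqref{relseq}. Since \(\Delta_0\) is a section of \(p\), it is transverse to the fibers. We therefore expect an exact sequence
\[0\to p^*\Omega^1_{\PP^1}\to \Omega^1_{\FF_d}(\log\Delta_0)\to \Omega^1_{\FF_d/\PP^1}(\Delta_0)\to 0.\]
We will check this in local coordinates \((t,z)\) with \(p=t\) and \(\Delta_0=(z=0)\): the quotient is generated by \(dz/z\). Its class is \(-2\Delta+dF+\Delta_0=-\Delta_0-2F\). Taking symmetric powers, \(S^j\Omega^1_{\FF_d}(\log\Delta_0)|_D\) has a filtration with graded pieces
\[\mathcal{O}_D\bigl(-2pF+q(-\Delta_0-2F)\bigr),\qquad p+q=j.\]
Hence it suffices to show that each of these has negative degree on \(D\).

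Write \(D\equiv a\Delta_0+bF\). The degree of the graded piece is
\[-2pa+q\bigl(-(a\Delta_0+bF)\cdot\Delta_0-2a\bigr)=-2pa+q\bigl(ad-b-2a\bigr).\]
We now use \eqref{irred}.
\begin{itemize}
\item If \(D\) is not a fiber, then \(a>0\) and \(b\geqslant ad\), so \(ad-b-2a\leqslant -2a<0\) and \(-2pa<0\) whenever \(p>0\). Every piece with \(j\geqslant1\) therefore has negative degree.
\item If \(D=F\), then \(D^2=0\), which the hypothesis excludes.
\item The case \(D=\Delta_0\) is not simple normal crossing with \(\Delta_0\), so it does not occur either.
\end{itemize}
Since \(D\) is irreducible, \(h^0\) of each graded piece vanishes, and induction along the filtration gives the required vanishing. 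The hypothesis \(D^2\neq0\) thus only serves to rule out fibers. For those, the degree \(-2pa+q(-b)\) equals \(-q<0\) for \(p=0\) but \(0\) for \(q=0\), so the argument would fail.

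The main obstacle is verifying the logarithmic relative sequence: exactness and the identification of the quotient with \(\Omega^1_{\FF_d/\PP^1}(\Delta_0)\), which uses transversality of \(\Delta_0\) to the fibers. If this turns out to be delicate, the fallback is the inclusion \(\Omega^1_{\FF_d}(\log\Delta_0)\subset\Omega^1_{\FF_d}(\Delta_0)\). Combined with \eqref{relseq}, it gives graded pieces \(-2pF+q(-2\Delta+dF)+j\Delta_0\). This twist is weaker and may not give negative degrees, so the logarithmic sequence is the preferred route.
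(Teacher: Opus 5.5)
You follow the same route as the paper: Corollary \ref{cor:VanishingLogGeneral} with $D'=\Delta_0$, $D_c=D$; the vanishing for $\log\Delta_0$ from Theorem \ref{prop:VanishingFd1Comp}; the extension $0\to p^*\Omega^1_{\PP^1}\to\Omega^1_{\FF_d}(\log\Delta_0)\to\Omega^1_{\FF_d/\PP^1}(\Delta_0)\to 0$; and a degree count on $D$. The paper gets the extension from a diagram of residue sequences, and your local-coordinate check of it is fine. However, there is an arithmetic slip that changes the logic. Since $\Delta=\Delta_0+dF$, one has $-2\Delta+dF+\Delta_0=-\Delta_0-dF=-\Delta$, not $-\Delta_0-2F$. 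So the graded pieces of $S^j\Omega^1_{\FF_d}(\log\Delta_0)|_D$ are $\mathcal{O}_D(-2pF-q\Delta)$, of degree $-2ap-bq$, not $-2pa+q(ad-b-2a)$.

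With the correct degree, negativity for every $p+q=j\geqslant 1$ needs both $a>0$ \emph{and} $b>0$. Your first bullet only uses $a>0$ and $b\geqslant ad$. For $d=0$ this allows $b=0$, i.e. $D$ another section of class $\Delta_0$, a horizontal line of $\PP^1\times\PP^1$. Such a $D$ is disjoint from $\Delta_0$, so $\Delta_0+D$ is simple normal crossing. There the piece with $p=0$, $q=j$ has degree $0$. Indeed $\Omega^1_{\FF_0}(\log\Delta_0)|_D\simeq\mathcal{O}_D(-2)\oplus\mathcal{O}_D$ has nonzero sections in every symmetric power, so the input needed for Corollary \ref{cor:VanishingLogGeneral} genuinely fails. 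This case is excluded exactly by the hypothesis $D^2\neq 0$, since $D^2=a(2b-ad)$, which equals $2ab$ when $d=0$. So your remark that $D^2\neq 0$ ``only serves to rule out fibers'' is wrong; your incorrect formula was silently covering this case. The repair is short. For $d\geqslant 1$ and $D\neq F,\Delta_0$, the description \eqref{irred} gives $a>0$ and $b\geqslant ad>0$. For $d=0$, $D^2\neq 0$ forces $a,b>0$. In both cases $-2ap-bq<0$, and the rest of your argument then goes through exactly as in the paper.
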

\begin{proof}
    The assertion will follow from Corollary \ref{cor:VanishingLogGeneral} and \cref{prop:VanishingFd1Comp} (\(h^0(\FF_d,S^m\Omega_{\FF_d}^1(\log(\Delta_0)))=0\)) once we prove \(h^0(D,S^{m-i}\Omega_{\FF_d}^1(\log(\Delta_0))|_{D})=0\) for all \(1\le i<m\). 

\

     One has a commutative diagram

     $$ \xymatrix{
      & & 0 \ar[d] & 0 \ar[d]& \\
     0 \ar[r] & \pi^*\Omega^1_{\PP^1}\ar[r]\ar@{=}[d]& \Omega^1_{\FF_d}\ar[d]\ar[r]&\Omega^1_{\FF_d/\PP^1} \ar[d]\ar[r]& 0\\
       0 \ar[r] & \pi^*\Omega^1_{\PP^1}\ar[r]& 
     \Omega_{\FF_d}^1(\log \Delta_0)\ar[d]\ar[r]&\Omega^1_{\FF_d/\PP^1}(\log \Delta_0) \ar[d]\ar[r]& 0\\
     & &\mathcal{O}_{\Delta_0} \ar[d]\ar@{=}[r]&\mathcal{O}_{\Delta_0} \ar[d] & \\
     & & 0& 0&
  }$$
 Moreover, on the one hand we have \(\pi^*\Omega_{\PP^1}\simeq\pi^*\mathcal{O}_{\PP^1}(-2)\simeq\mathcal{O}_{\FF_d}(-2F)\) and \(\Omega_{\FF_d/\PP^1}(\log \Delta_0)=\Omega_{\FF_d/\PP^1}+\Delta_0\simeq\mathcal{O}_{\FF_d}(-2\Delta+dF+\Delta_0)\simeq \mathcal{O}_{\FF_d}(-\Delta)\). Thus we have an extension 
 \[0\to \mathcal{O}_{D}(-2F)\to \Omega_{\FF_d}^1(\log\Delta_0)|_{D}\to \mathcal{O}_{D}(-\Delta)\to 0.\]

 \
 
 In particular, we have for all  $ 1\le i < m$,
 \[h^0(D,S^{m-i}\Omega_{\FF_d}^1(\log\Delta_0)|_{D})\leqslant \sum_{\substack{p+q=m-i\\ p,q\geqslant 0}}h^0(D,
    \mathcal{O}_{D}(-2pF-q\Delta)).\]
   
By the assumptions \(D^2\neq 0\) and \(\Delta_0+D\) simple normal crossing, the cases \(D=F\), \(D=\Delta_0\), and, when \(d=0\), a section disjoint from \(\Delta_0\) in the same ruling are excluded. Hence with the addition of the description of irreducible classes in \(NS(\mathbb{F}_d)\) we have \(D\sim a\Delta_0+bF\) with \(a>0\) and \(b>0\) and  
    \[\deg_D\mathcal{O}_{D}(-2pF-q\Delta)=D\cdot(-2pF+-q\Delta)=(a\Delta_0+bF)\cdot(-2pF+-q\Delta)=-2ap-qb<0,\]

    From this we deduce \(h^0(D,
    \mathcal{O}_{D}(-2pF-q\Delta))\)=0, hence  \(h^0(D,S^{m-i}\Omega_{\FF_d}^1(\log\Delta_0)|_{D})=0\), as announced.
\end{proof}

\

\

\section{Minimal cotangent dimension for double covers}
\subsection{Symmetric differentials on double covers}
\ 

\

 We will now apply the results of the previous section to prove vanishing results for symmetric differential forms  for some special double covers.

  
\

\begin{lemma}\label{lemmavan} Let $Y$ be a smooth surface, $D=D_1+\cdots+D_c$ be a simple normal crossing divisor on \(Y\). Let \(\pi:X\to Y\) a double cover of \(Y\) with branch locus \(D\). Let \(\nu:\widehat{X}\to X\) be the minimal resolution of singularities of \(X\) and
$m>0$.  If $H^0(Y,S^{2m}\Omega^1_Y(\log D)_{(m),D_c})=0$, then 
$$H^0(\widehat{X},S^m\Omega^1_{\widehat{X}})=0$$
\end{lemma}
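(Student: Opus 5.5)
The plan is to turn a nonzero symmetric differential on \(\widehat{X}\) into a nonzero section of \(S^{2m}\Omega^1_Y(\log D)_{(m),D_c}\) on \(Y\). Let \(\omega\in H^0(\widehat{X},S^m\Omega^1_{\widehat{X}})\). Since \(D\) is simple normal crossing, \(X\) is singular only over the nodes of \(D\), where it has \(A_1\) points. The resolution \(\nu\) is an isomorphism off the \((-2)\)-curves lying over these nodes. So \(\omega\) restricts to a section of \(S^m\Omega^1_{X^\circ}\), where \(X^\circ=X\setminus\mathrm{Sing}(X)\).

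Let \(\iota\) be the covering involution of \(\pi\), which acts on \(X^\circ\). Form the product
\[\eta:=\omega\cdot\iota^*\omega\in H^0(X^\circ,S^{2m}\Omega^1_{X^\circ}).\]
Then \(\eta\) is \(\iota\)-invariant. Moreover \(\eta\neq 0\) whenever \(\omega\neq 0\), because the symmetric algebra of a locally free sheaf on an integral scheme has no zero divisors.

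Since \(\eta\) is invariant, it descends to a symmetric differential on \(Y^\circ\setminus D\) (with \(Y^\circ=\pi(X^\circ)\)), that is, a meromorphic section of \(S^{2m}\Omega^1_Y\) on \(Y^\circ\). I then check its behaviour along \(D\) in local coordinates. Near a smooth point of \(D_i\), take coordinates \((w,y)\) upstairs and \((z,y)\) downstairs with \(z=w^2\). Then
\[dw=\tfrac{w}{2}\,\frac{dz}{z},\qquad dy=dy.\]
A local term \(f(w,y)\,dw^a dy^b\) of \(\omega\) becomes \(f\,(w/2)^a(dz/z)^a dy^b\). In the product \(\eta\), the two factors combine with the \(\iota\)-conjugate terms, so that after symmetrisation only even powers of \(w\) survive, which are holomorphic functions of \(z\). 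A term of \(\eta\) has \((dz/z)^{a+a'}\) with a coefficient divisible by \(w^{a+a'}=z^{(a+a')/2}\), where \(a,a'\le m\). Its pole order along \(D_i\) is therefore at most \((a+a')/2\le m\). Hence \(\eta\) is a section of \(S^{2m}\Omega^1_Y(\log D)\) with pole order at most \(m\) along each component, and in particular along \(D_c\). So, over \(Y\) minus the nodes of \(D\), \(\eta\) is a section of \(S^{2m}\Omega^1_Y(\log D)_{(m),D_c}\).

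The remaining set consists of finitely many points. The filtration piece is locally free (by the frame described earlier), so by Hartogs \(\eta\) extends to a global section of \(S^{2m}\Omega^1_Y(\log D)_{(m),D_c}\). The hypothesis gives \(\eta=0\), hence \(\omega=0\) on the dense open \(X^\circ\setminus\nu(\mathrm{Exc})\). Therefore \(\omega=0\) on \(\widehat{X}\).

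The main obstacle is the pole-order bookkeeping in the local computation. One must make sure the bound really is \(m\), not \(2m\) or \(m/2\) rounded up, and that the \(\iota\)-invariance is what forces the coefficients to be honest functions of \(z\). I would handle this by splitting \(\omega\) into \(\iota\)-eigencomponents term by term in \(w\). Near the nodes, the alternative to Hartogs would be a direct computation on the \(A_1\) quotient, but Hartogs on the locally free sheaf should suffice.
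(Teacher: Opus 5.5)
Your proposal is correct and follows the paper's proof essentially step by step: you form the $\iota$-invariant product $\omega\cdot\iota^*\omega$, descend it off the ramification locus, bound the pole order along each $D_i$ by a local computation in the invariant frame $(dz/z,\,dy)$, and extend over the nodes of $D$ by Hartogs. One phrase needs tidying: ``$w^{a+a'}=z^{(a+a')/2}$'' only makes sense for even $p=a+a'$. For odd $p$, invariance makes the coefficient even in $w$ and divisible by $w^p$, hence divisible by $z^{\lceil p/2\rceil}$, so the pole order is $\lfloor p/2\rfloor\le m$; this is exactly the paper's even/odd case split.
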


\begin{proof}  Let us denote by \(i:X\to X\) the involution on \(X\) as double cover, and \({\hat \imath}: \widehat{X}\to \widehat{X}\) the induced involution on \(\widehat{X} \). Let \(E\) denote the exceptional divisor of \(\nu\) and \(\Sigma\subset X\) the singularities of \(X\).  Pick $w\in H^0(\widehat{X},S^m\Omega^1_{\widehat{X}})$  and consider $w\otimes {\hat{\imath}}^*w\in H^0(\widehat{X},S^{2m}\Omega^1_{\widehat{X}})^{\ZZ_2}$ (the subspace of $\ZZ_2$-invariant $2m$-differentials). If $w$ is nontrivial, so is $w\otimes {\hat{\imath}}^*w$. 
We therefore obtain a \(\ZZ_2\) invariant section \[\omega^{\circ}:=w\otimes {\hat{\imath}}^*w|_{\widehat{X}\setminus E}\in H^0(\widehat{X}\setminus E,S^{2m}\Omega^1_{\widehat{X}\setminus E})^{\ZZ_2}\simeq H^0(X\setminus \Sigma,S^{2m}\Omega^1_{X\setminus \Sigma})^{\ZZ_2}.\]

Let us denote by \(\Sigma_Y\) the singularities of \(D\) in \(Y\). The invariant form \(\omega^{\circ}\) descends to a logarithmic form \(\mu^{\circ}\in H^0(Y\setminus \Sigma_Y,S^{2m}\Omega^1_{Y\setminus \Sigma_Y}(\log D))\). Moreover this form  has pole order at most \(m\) along any component of \(D\), and in particular along \(D_c\).
To see this, consider $R\subset X\setminus \Sigma$ the ramification locus of $\pi|_{X\setminus \Sigma}$ and any $x\in R$. There are local charts  $(V_{\pi(x)},u_1,u_2)$ and $(U_x,z_1,z_2)$, centered at $\pi(x)$ and $x$ respectively  such that $R\cap U_x=\{z_1=0\}$, \(D\cap V_{\pi(x)}=\{u_1=0\}\) and $\pi(z_1,z_2)=(z_1^2,z_2)$. 
In these coordinates, the involution \(i\) is given by \(i(z_1,z_2)=(-z_1,z_2)\) and \(\omega^\circ\) can be written as
\[\omega^\circ=\sum_{p+q=2m}a_{p,q}(z_1,z_2)(dz_1)^p(dz_2)^q.\]

Outside \(D\), using the relation \(dz_1=\frac{du_1}{2\sqrt{u_1}}\),
one has
\[\mu^\circ=\sum_{p+q=2m}a_{p,q}(\sqrt{u_1},u_2)\left(\frac{du_1}{2\sqrt{u_1}}\right)^p(du_2)^q.\]
On the other hand, since \(i^*\omega^\circ=\omega^\circ\), it follows that \(a_{p,q}(-z_1,z_2)=(-1)^pa_{p,q}(z_1,z_2)\). In particular, if \(p=2\ell\) is even, then only even powers of \(z_1\) appears in \(a(z_1,z_2)\), which implies that \(a_{p,q}(\sqrt{u_1},u_2)\) is holomorphic and one also has \(\left(\frac{du_1}{2\sqrt{u_1}}\right)^p(du_2)^q=\frac{(du_1)^p}{2u_1^\ell}(du_2)^q\) has logarithmic poles of order less than \(\ell=\frac{p}{2}\leqslant m\). If on the other hand \(p=2\ell+1\) is odd, then only odd powers of \(z_1\) appear in \(a_{p,q}(z_1,z_2)\) and in particular, \(\frac{a_{p,q}(\sqrt{u_1},u_2)}{\sqrt{u_1}}\) is holomorphic . On the other hand one has \(\left(\frac{du_1}{2\sqrt{u_1}}\right)^p(du_2)^q=\frac{(du_1)^p}{2u_1^\ell\sqrt{u_1}}(du_2)^q\), therefore 
\(a_{p,q}(\sqrt{u_1},u_2)\frac{(du_1)^p}{2u_1^\ell\sqrt{u_1}}(du_2)^q=\frac{a_{p,q}(\sqrt{u_1},u_2)}{\sqrt{u_1}}\frac{(du_1)^p}{2u_1^\ell}(du_2)^q\) has only logarithmic poles of order less than \(\ell\leqslant \frac{p-1}{2}\leqslant {m}\), as announced.

It now suffices to apply Hartogs theorem to extend  \(\mu^\circ\)  to a non-zero logarithmic form \(\mu\) on \(Y\) with only poles of order less than \(m\) along any component of \(D\) and in particular \(0\neq \mu\in H^0(Y,S^{2m}\Omega^1_Y(\log D)_{(m),D_c})=0\) contradicting our hypothesis.

\end{proof}




    

Combining this lemma with  \cref{cor:VanishingP2} and  \cref{prop:VanishingFd1Comp} we immediately deduce the following.
\begin{corollary}\label{cor:VanishingCover}
    Let \(X\) be a double cover of \(\PP^2\) or $\FF_d$ branched along a smooth irreducible curve,  then 
    \[H^0(X,S^m\Omega_X^1)=0,\ \ \forall m\geqslant 1.\]
\end{corollary}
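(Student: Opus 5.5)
The plan is to apply \cref{lemmavan} with $Y=\PP^2$ or $Y=\FF_d$ and $D=D_c$ the (smooth, irreducible) branch curve, so that $c=1$ and $D'=\varnothing$. Since $D$ is smooth, the double cover $X$ is already smooth, hence $\widehat{X}=X$. By \cref{lemmavan}, to get $H^0(X,S^m\Omega^1_X)=0$ for a given $m\geqslant 1$ it suffices to check
\[H^0\bigl(Y,S^{2m}\Omega^1_Y(\log D)_{(m),D}\bigr)=0.\]

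The key observation is that this filtered piece is a subsheaf of the full symmetric power $S^{2m}\Omega^1_Y(\log D)$; the inclusion is part of the definition of the filtration. Hence
\[h^0\bigl(Y,S^{2m}\Omega^1_Y(\log D)_{(m),D}\bigr)\leqslant h^0\bigl(Y,S^{2m}\Omega^1_Y(\log D)\bigr).\]
The right-hand side vanishes for every $2m>0$:
\begin{itemize}
\item for $Y=\PP^2$ this is \cref{cor:VanishingP2};
\item for $Y=\FF_d$ this is \cref{prop:VanishingFd1Comp}, which covers every smooth irreducible $D$, including a fiber and $\Delta_0$.
\end{itemize}
Combining the two displays with \cref{lemmavan} gives the statement for all $m\geqslant 1$.

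The argument is essentially formal, so there is no real obstacle. The only points to check are bookkeeping ones. First, \cref{lemmavan} is stated for an snc branch divisor, and a smooth irreducible curve qualifies with $c=1$. Second, the branch locus of a double cover must be divisible by $2$ in $\mathrm{Pic}(Y)$, but this plays no role in the argument: we only need the cover to exist, and the hypotheses take that for granted. We do not need the sharper information about the $(m)$-filtered piece; the crude inclusion into the full log symmetric power already suffices.
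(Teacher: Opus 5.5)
Your proposal is correct and follows the paper's own argument: the paper deduces this corollary by combining \cref{lemmavan} with \cref{cor:VanishingP2} and \cref{prop:VanishingFd1Comp}, using (as you do) that $S^{2m}\Omega^1_Y(\log D)_{(m),D}$ is a subsheaf of $S^{2m}\Omega^1_Y(\log D)$. Your bookkeeping remarks are also correct: $X$ is already smooth because the branch curve is smooth, and divisibility by $2$ plays no role.
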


 \

 \begin{corollary}\label{cor:VanishingCover2}
    Let \(X\) be the minimal resolution of a  double cover of \(\FF_d\) branched along a simple normal crossing divisor \(\Delta_0+D\) where \(D\) is an irreducible smooth curve. Then 
    \[H^0(X,S^m\Omega_X^1)=0,\ \ \forall m\geqslant 1.\]
\end{corollary}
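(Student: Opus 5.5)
The plan is to combine \cref{lemmavan} with \cref{prop:VanishingFd2Comp}, taking $Y=\FF_d$ and branch divisor $\Delta_0+D$ ordered so that the distinguished component is $D_c=D$. By \cref{lemmavan}, it suffices to show that for every $m\geqslant 1$
\[H^0\bigl(\FF_d,S^{2m}\Omega^1_{\FF_d}(\log(\Delta_0+D))_{(m),D}\bigr)=0.\]
The filtration is increasing. For $m\geqslant 1$ we have $m\leqslant 2m-1$, so this space sits inside $H^0(\FF_d,S^{2m}\Omega^1_{\FF_d}(\log(\Delta_0+D))_{(2m-1),D})$. \Cref{prop:VanishingFd2Comp}, applied with $2m$ in place of $m$, says the latter vanishes. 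The vanishing of $H^0(\widehat X,S^m\Omega^1_{\widehat X})$ then follows.

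The main thing to verify is that the hypotheses of \cref{prop:VanishingFd2Comp} hold. We need $D$ smooth and irreducible with $D^2\neq 0$, and $\Delta_0+D$ simple normal crossing. The last two conditions are part of the setup. For $D^2\neq 0$, note that the branch divisor of a double cover must be $2$-divisible in $\mathrm{Pic}(\FF_d)$, so $\Delta_0+D\sim 2L$. Write $D\sim a\Delta_0+bF$. The case $D=F$ is impossible, since $\Delta_0+F$ is not even. If $D$ is a section with $D^2=0$, then either $d=0$ and $D$ is a fiber of the other ruling (so $\Delta_0+D\sim 2\Delta_0$ would force $D\sim \Delta_0$), or $a$ and $b$ have the wrong parity. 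Both should be excluded by the parity requirement $a+1$, $b$ even. More generally, $D^2=a(2b-ad)=0$ with $a>0$ forces $2b=ad$. I would check that this is incompatible with $a$ odd together with $b\geqslant ad$ (for $d\geqslant 1$), and with the irreducibility constraints (for $d=0$). If some residual degenerate case survives, for instance $d=0$ with $D$ in the class of the other ruling, I would note it is harmless: the branch locus is then a union of fibers of one of the rulings, or the surface is not of the intended type. In that case I would treat it separately using the fibration-type argument of \cref{prop:VanishingFd1Comp}.

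A secondary point is that \cref{lemmavan} allows $X$ to be singular, and gives the vanishing on its minimal resolution. The double cover branched along $\Delta_0+D$ has $A_1$ singularities over the transverse points of $\Delta_0\cap D$, and the minimal resolution is exactly the $\widehat X$ of the lemma, so it matches the statement. I expect the only real obstacle to be the bookkeeping of the hypothesis $D^2\neq 0$. The vanishing itself is immediate from the chain of inclusions in the filtration and \cref{prop:VanishingFd2Comp}.
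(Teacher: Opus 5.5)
Your argument is the paper's: \cref{lemmavan} with $D_c=D$, the inclusion of the $(m)$-piece into the $(2m-1)$-piece of the filtration, \cref{prop:VanishingFd2Comp} in degree $2m$, and the parity of the branch divisor to rule out $D=F$. For $d\geqslant 1$ no further parity argument is needed: $D^2=a(2b-ad)=0$ with $a>0$ and $b\geqslant ad\geqslant 1$ is impossible.

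The one incorrect step is your claim that parity also excludes the case $d=0$, $D\sim\Delta_0$. It does not, because $\Delta_0+D\sim 2\Delta_0$ is even. Any fiber of the second ruling of $\PP^1\times\PP^1$ other than $\Delta_0$ (which is itself such a fiber) is an admissible $D$ with $D^2=0$, and \cref{prop:VanishingFd2Comp} does not apply to it.

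Your fallback correctly flags this case, and the paper does treat it separately. The branch locus consists of two fibers of the second projection. So $X\cong\PP^1\times C$, where $C$ is the double cover of $\PP^1$ branched at two points; thus $X\cong\PP^1\times\PP^1$, which is rational and has no symmetric differentials.

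The fibration argument of \cref{prop:VanishingFd1Comp} does not transfer verbatim to the log sheaf here. Take $\Delta_0=\{t=0\}$ and $D=\{t=1\}$. Then $\Omega^1_{\PP^1}(\log(\{0\}+\{1\}))\cong\mathcal{O}_{\PP^1}$, so $\bigl(dt/(t(t-1))\bigr)^{2m}$ is a nonzero global section of $S^{2m}\Omega^1_{\FF_0}(\log(\Delta_0+D))$. You should either argue on $X$ directly as above, or observe two things: this is the only section up to scalars, and its pole order $2m$ along $D$ exceeds $m$. The filtered piece needed in \cref{lemmavan} then still vanishes.
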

\begin{proof} For $d>0$ the result follows from \cref{lemmavan} and  \cref{prop:VanishingFd2Comp} since the divisor $\Delta_0+F$ excluded in the proposition is not divisible by 2 in Pic($\FF_d)$ and hence cannot occur as the branch locus of a double cover. For $d=0$, the same approach yields the result, except for the case that $D$ is a distinct section of $\FF_0$, but in this case the double cover will still be a ruled surface over $\PP^1$ and hence the result holds.
    
\end{proof}

\

\begin{theorem}\label{A}
Let \(X\) be a smooth double cover of \(\PP^2\) or \(\FF_d\), then $X$ will not have maximal cotangent dimension (i.e. $\Omega^1_X$ cannot be big). Moreover, unless the base of the double cover is $\FF_d$ and the branch locus is a union of fibers, then $X$ has minimal cotangent dimension, i.e. 
    \[H^0(X,S^m\Omega_X^1)=0,\ \ \forall m\geqslant 1.\]

\end{theorem}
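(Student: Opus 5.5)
A smooth double cover \(\pi:X\to Y\) with \(Y=\PP^2\) or \(\FF_d\) has a smooth branch curve \(B\in|2L|\). Being smooth, \(B\) is a disjoint union of smooth irreducible curves. I would first show that, apart from the excluded situation, \(B\) is irreducible, and then invoke \cref{cor:VanishingCover}.

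\textbf{Irreducibility of \(B\).} On \(\PP^2\) any two curves meet, so a smooth plane curve is irreducible.

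On \(\FF_d\), take two disjoint irreducible components \(B_1\sim a_1\Delta_0+b_1F\) and \(B_2\sim a_2\Delta_0+b_2F\). Their intersection number is
\[
B_1\cdot B_2=-a_1a_2d+a_1b_2+a_2b_1 .
\]
I then use the list \eqref{irred} of classes carrying irreducible curves.

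\emph{Case \(d\geq1\).} If \(a_i>0\) and \(b_i\geq a_id\) for both components, the intersection number is at least \(a_1a_2d>0\), so they meet. The remaining possibilities are that a component is \(\Delta_0\) or a fibre \(F\). Now \(\Delta_0\cdot F=1\), and \(\Delta_0\) meets every other irreducible curve that is not a fibre only when \(b>ad\), i.e. \(\Delta_0\cdot(a\Delta_0+bF)=b-ad\). So disjoint unions are of three kinds:
\begin{itemize}
\item a union of fibres;
\item \(\Delta_0\) together with curves in \(|a\Delta_0+ad F|\);
\item several curves in classes \(a\Delta_0+adF\) that are pairwise disjoint.
\end{itemize}
The third kind is impossible, since \((a_1\Delta_0+a_1dF)\cdot(a_2\Delta_0+a_2dF)=a_1a_2d>0\). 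So \(B\) is either a union of fibres, or \(\Delta_0+D\) with \(D\) smooth irreducible (several such \(D\) would meet each other), or \(\Delta_0\) alone, or irreducible.

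\emph{Case \(d=0\).} Disjoint irreducible curves must be fibres of the same ruling. Indeed \((a_1,b_1)\cdot(a_2,b_2)=a_1b_2+a_2b_1=0\) forces both classes to be multiples of the same ruling class. So \(B\) is either a union of fibres of one ruling or irreducible.

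Parity in \(\mathrm{Pic}\) discards further cases. \(\Delta_0\) alone is not divisible by 2, and neither is \(\Delta_0+D\) with \(D\sim a\Delta_0+adF\) unless \(a\) is odd. That last case is the awkward one.

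\textbf{Handling the cases.}
\begin{itemize}
\item If \(B\) is irreducible, \cref{cor:VanishingCover} gives \(H^0(X,S^m\Omega^1_X)=0\).
\item If \(B=\Delta_0+D\) is smooth, hence disjoint, then \(X\) is already smooth. Since \(D^2=a^2d\neq0\) and \(\Delta_0+D\) is simple normal crossing (trivially, being disjoint), \cref{cor:VanishingCover2} applies directly.
\item If \(B\) is a union of \(2k\) fibres, then \(X\) is fibred over the double cover \(C\to\PP^1\) branched at \(2k\) points, with fibres \(\PP^1\). So \(X\) is ruled and \(\Omega^1_X\) is not big. (Symmetric differentials may exist, pulled back from \(C\) when \(g(C)\geq1\), which is why this case is excluded from the vanishing.)
\end{itemize}
In every non-excluded case \(h^0(X,S^m\Omega^1_X)=0\), so non-bigness holds throughout. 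Alternatively, for the fibre case one can use the easy addition formula: the restriction of \(\Omega^1_X\) to a fibre \(\PP^1\) has symmetric dimension at most \(0\), giving \(\lambda_\Omega(X)\leq 1\).

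\textbf{Main obstacle.} The delicate part is the combinatorial classification of smooth reducible branch curves on \(\FF_d\): making sure that every disjoint configuration other than unions of fibres reduces to \(\Delta_0+D\). I expect the care to lie in the non-ample classes \(b=ad\), and in the \(d=0\) case with its two rulings.
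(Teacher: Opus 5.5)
Your proposal is correct and follows the paper's proof. The union-of-fibres case is treated as a ruled surface via easy addition, an irreducible branch curve via \cref{cor:VanishingCover}, and the only remaining smooth configuration, $\Delta_0+D$ with $D$ disjoint from $\Delta_0$, via \cref{cor:VanishingCover2}; the differences are cosmetic: you exclude other reducible branch curves by explicit intersection numbers from \eqref{irred} (and check $D^2=a^2d\neq 0$) where the paper uses the signs of self-intersections and the Hodge index theorem, and your easy-addition bound $\lambda_\Omega(X)\leqslant 1$ is weaker than the paper's $\lambda_\Omega(X)\leqslant 0$ but still enough.
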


\begin{proof} If the base of the double cover is $\FF_d$ and the branch locus is a union of fibers, then the double cover $X$ is a ruled surface. In \cite{sakai}, it is shown that ruled surfaces have non-positive cotangent dimension. This follows from the easy addition formula of \cref{cd} and the fact that $\lambda(\Omega^1_X|_F,F)=\lambda({\cal O}_{\PP^1}\oplus {\cal O}_{\PP^1}(-2),\PP^1)=-1$, where $F$ is a fiber of the ruling on $X$.

\

Now assume that the branch locus $B$ is not a union of fibers. Since \(X\) is smooth, \(B\) has to be smooth as well, with possibly more than one irreducible component. If \(B\) has a single component, then the result follows directly from corollary \ref{cor:VanishingCover}. This covers the case where the base of the cover is $\PP^2$.

\ 

If the base of the cover is $\FF_d$, then the number of irreducible components of $B$ is at most two. In $\FF_d$ the irreducible curves with trivial self-intersection must be fibers (for $\FF_0$ fibers of one of the two rullings), with negative self-intersection there is only one, if it exists (i.e. d>0).  Fibers intersect  any curve that it is not a fiber. From this follows that if  $B$ has more than one component, then none of the components can be a fiber, since otherwise all the components had to be fibers and that is excluded by hypothesis. Finally, the observation follows from the Hodge index theorem, that excludes the existence of two components of $B$ with positive self-intersection that do not intersect. 

\

We need to consider the case with base of the double cover $\FF_d$ and branch locus $B$ with two components, from the above follows that \(B=\Delta_0+D\), where \(D\) is smooth irreducible and doesn't intersect \(\Delta_0\). We can then conclude from corollary \ref{cor:VanishingCover2}.

\end{proof}
\subsection{Maximal cotangent for double covers of rational surfaces}

\

We just saw that smooth double covers $X$ of minimal rational surfaces $Y$ have minimal cotangent dimension, with the possible exception of $Y=\FF_d$ with the branch locus $B$  a union of fibers. It is not difficult to see that if $Y=\FF_d$ and $B$ is a union of fibers the cotangent dimension cannot be maximal. This section illustrates that if we drop the minimality condition on the  rational surface that is the base of the double cover, then the cover can have maximal cotangent dimension. Below we provide a class of examples where the  cover has maximal cotangent dimension. Further examples will appear as Horikawa surfaces in the next section.

\begin{proposition}\label{CMS} There are smooth double covers of rational surfaces with maximal cotangent dimension.
    
\end{proposition}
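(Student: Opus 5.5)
We will produce the examples with the CMS--criterion \eqref{cms}, applied to the minimal model of a double cover of \(\PP^2\) whose branch curve has many nodes (or \(A_n\) singularities). The example announced in the text has \(K^2/\chi=25/11\). Horikawa's canonical resolution then identifies the minimal resolution of such a singular double cover with a smooth double cover of a blow-up of \(\PP^2\), i.e.\ of a non-minimal rational surface.

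\textbf{Construction.} Take a reduced plane curve \(B\subset\PP^2\) of even degree \(2k\) whose only singularities are \(\delta\) ordinary nodes, and let \(Y\to X\) be the double cover branched along \(B\). Each node of \(B\) gives an \(A_1\) point of \(Y\). Blowing up \(\PP^2\) at the nodes, the branch divisor becomes the strict transform of \(B\), which is smooth and even. The resulting smooth double cover \(\widehat X\) of the blown-up plane is the minimal resolution of \(Y\), and the exceptional curves are \((-2)\)-curves. For \(k\ge 4\) (so \(2k\ge 8\)) the divisor \(K_{\PP^2}+\tfrac12 B=(k-3)H\) is ample. Hence \(Y\) is the canonical model, \(\widehat X\) is minimal of general type, and
\[
K^2=2(k-3)^2,\qquad \chi=2+\tfrac12(k-3)k-\ldots,
\]
both independent of \(\delta\), since nodes are negligible singularities. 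Explicitly, \(\chi=\tfrac{(k-1)(k-2)}{2}+1\).

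\textbf{Criterion.} By the CMS--criterion we need
\[
\delta\, h^1_\Omega(A_1)+\tfrac{1}{6}\bigl(K^2-(12\chi-K^2)\bigr)>0.
\]
The lower bound \eqref{h-bound} gives \(h^1_\Omega(A_1)>\tfrac16-\tfrac1{45}\). So it suffices that \(\delta\) is large, of order a fixed constant times \(k^2\). We then need to know that plane curves of degree \(2k\) with that many nodes exist. The maximal number of nodes is achieved by unions of lines, but then the curve is reducible with triple points forbidden, so we must check genericity. The natural candidate is a union of \(2k\) general lines, which gives \(\delta=\binom{2k}{2}\approx 2k^2\) nodes. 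If needed, nodes can be upgraded to \(A_n\) points to exploit the linear growth of \(h^1_\Omega(A_n)\) in \(n\). The Miyaoka and Hodge bounds \eqref{hodge} show that near-extremal configurations are required. An alternative is to start from a double cover of \(\FF_d\) and branch curves with \(A_n\) singularities, tuned to reach \(K^2/\chi=25/11\).

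\textbf{Main obstacle.} The key step is the numerical comparison: \(c_1^2-c_2=2K^2-12\chi\) is negative, of order \(-2k^2\), while the singularity contribution is roughly \(\tfrac{\delta}{6}\). With \(2k\) general lines, \(\tfrac{\delta}{6}\approx \tfrac{k^2}{3}\), which must beat \(\tfrac{|c_1^2-c_2|}{6}\approx \tfrac{k^2}{3}\) as well. The two sides agree to leading order, so the outcome depends on lower order terms and on the constant \(\tfrac1{45}\).

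I would therefore first compute exactly with \(2k\) lines, using \(K^2=2(k-3)^2\) and \(\chi=\tfrac{k^2-3k+4}{2}\). If that fails, I would switch to configurations with higher \(A_n\) points, for instance lines through few points, or conics with high tangency producing \(A_{2r-1}\) points. I would then optimize the count of \(\sum h^1_\Omega(A_{n_i})\approx \sum n_i/6\) against the Hodge bound until the inequality holds for some explicit configuration.
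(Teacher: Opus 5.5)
\textbf{There is a genuine gap.} Your setup is the paper's: the CMS--criterion \eqref{cms} applied to the minimal resolution of a double cover of $\PP^2$ branched along a simple curve of degree $2k$, $k\geqslant 4$, with $K^2=2(k-3)^2$ and $c_2=2(2k^2-3k+3)$. Via Horikawa's canonical resolution this is a smooth double cover of a blown-up plane. The problem is that you never produce a branch curve for which the criterion holds, and your one concrete candidate fails for every $k$.

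Since $c_1^2-c_2=-2k^2-6k+12$ and $h^1_\Omega(A_n)<\frac n6$ by \eqref{h-bound}, a branch curve with only $A_{n_i}$ points can satisfy the criterion only if $\sum_i n_i>2k^2+6k-12$. A reduced curve of degree $2k$ has at most $\binom{2k}{2}=2k^2-k$ nodes. So for every nodal branch curve, in particular $2k$ general lines,
\[
L^1_\Omega+\frac{c_1^2-c_2}{6}<\frac{2k^2-k}{6}-\frac{2k^2+6k-12}{6}=\frac{12-7k}{6}<0 .
\]
The lower-order terms you hoped would decide the matter go the wrong way, independently of the constant $\frac1{45}$.

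Your fallbacks are left unverified, and the simplest of them leave your setting. Concurrent lines create ordinary $r$-fold points. For $r=3$ these are of type $d_4$, and no lower bound for $h^1_\Omega$ in type $D$ is available. For $r\geqslant 4$ they are non-simple, so the double cover is no longer the canonical model and your formulas for $K^2$ and $\chi$ no longer apply.

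What is missing is an existence result for plane curves of degree $2k$ with only $A$-singularities whose total Milnor number is of order $3k^2$, close to the Hodge bound $\rho_{-2}\leqslant 3k^2-3k+1$, and concentrated in few points. By \eqref{h-bound} it suffices that $\sum_i n_i-\frac{2}{15}N>2k^2+6k-12$, where $N$ is the number of singular points. So you want few points with large $n_i$, the opposite of many nodes.

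The paper uses Persson's curves: for $m\geqslant 4$ there are simple curves of degree $2m$ with $3m$ singularities of type $a_{m-1}$. Then
\[
\sum_{x} h^1_\Omega(x)+\frac{c_1^2-c_2}{6}>3m\Bigl(\frac{m-1}{6}-\frac{1}{45}\Bigr)-\frac{2m^2+6m-12}{6}=\frac16\Bigl(m^2-\frac{47}{5}m+12\Bigr),
\]
which is positive for $m\geqslant 8$. At $m=8$ this gives $K^2/\chi=50/22=25/11$.
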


\begin{proof} We apply the CMS-criterion \cref{cms} to show the existence of  minimal resolutions $X$ of double covers $X'$ of $\PP^2$, branched along simple curves (only a, d and e curve singularities) with many singularities, which have  maximal cotangent dimension.

\ 

Horikawa \cite{Horikawa1976I} showed that the minimal resolution $X$ of a double cover $X'$ of  $\PP^2$ branched along a simple curve is the same as the canonical resolution of $X'$, which is obtained by blowing up $\PP^2$ several times in order to resolve the singularities of the branch curve $B$ and then doing the double cover of the blown-up $\PP^2$ branched along the desingularization of $B$. Hence, $X$ can be viewed as a double cover of a rational surface. 

\ 

Let $\pi:X'_m\to \PP^2$ be the double cover of $\PP^2$ branched along a simple curve $B$ of degree $d=2m$ and $X_m$ be the minimal resolution of $X'_m$. The following holds for $X_m$,

\[ 
c_1^2(X_m)=2(m-3)^2
\]
\[
c_2(X_m)=2(2m^2-3m+3)
\]

Persson \cite{persson1982horikawa} showed that for $m\geqslant 4$, there are simple plane curves of degree $2m$ with $3m$ singularities of type $a_{m-1}$, this implies that $X'_m$ has $3m$ $A_{m-1}$ singularities. Using the lower bound $h^1_\Omega(A_n)>\frac {n}{6}-\frac{1}{45}$ given in \cref{h-bound},
it follows that:

\[
\sum_{x\in Sing(X'_{m})} h_\Omega^1(x)+\frac{s_2(X_m)}{3!}>\frac{1}{3!}(m^2-\frac{47}{5}m+12)
\]

\noindent note that $X'_m$ has ample canonical divisor and is the canonical model of $X_m$. The CMS-criterion gives that $X_m$ has maximal cotangent dimension for $m\geqslant 8$, when the right side is non-negative.

\end{proof}

\

\

\

\subsection{Generic Horikawa surfaces}

\

\

A (Noether) Horikawa surface for us is a minimal surface of general type lying on Noether's line, $c_1^2=2p_g-4$ (equivalently $c_1^2=2\chi -6$ or $c_1^2=\frac{1}{5}(c_2-36))$. The equivalences hold since all the conditions imply the irregularity $q=0$, see remark. 

\

\begin{remark}  Noether's inequality $c_1^2\geqslant 2p_g-4$ is equivalent to $c_1^2=2\chi+2q-6$ and $c_1^2\ge\frac{1}{5}(c_2-36+12q)$. Hence, if $c_1^2=2\chi-6$ or $c_1^2=\frac{1}{5}(c_2-36)$, we must have $q=0$. The result that  $c_1^2=2p_g-4$ implies $q=0$  appears in \cite {Bombieri}, theorem 9.  
\end{remark}

\

It follows from the above and the relation $12\chi=c_1^2+c_2$, that for Horikawa surfaces any one of the invariants $c_1^2$, $c_2$, $p_g$ and $\chi$ will determine the other three (and the Hilbert polynomial $P_K(m)=\chi(mK_X)$, which for a surface is determined $c_1^2$ and $\chi$). We will use $p_g$ as our reference invariant, $\chi=p_g+1$, $c_1^2=2p_g-4$ and $c_2=10p_g+16$. We denote by ${\cal H}_{p_g}$ the open subset parameterizing all smooth surfaces of  the quasi-projective Gieseker moduli scheme ${\cal M}_{2p_g-4,p_g+1}$. The quasi-projective variety ${\cal H}_{p_g}$ is irreducible if $ p_g \not\equiv 2 \!\pmod{4}$ and reducible with two irreducible and connected components otherwise.

\

Horikawa  studied the canonical map for Horikawa surfaces, $\phi_{|K_X|}:X\to \PP^{p_g-1}$ and stratified ${\cal H}_{p_g}$ in terms of the possible canonical images (or their minimal resolutions) which are $\PP^2$ or the Hirzebruch surfaces $\FF_d$. For
$p_g \neq 3,6$:

\[
{\cal H}_{p_g}
=
\bigsqcup_{\substack{d = 0 \\ d \equiv p_g \!\!\!\pmod{2}}}^{\left\lfloor \frac{p_g}{2} + 1 \right\rfloor}
{\cal H}_{p_g}^{(d)}.
\]

Here ${\cal H}_{p_g}^{(d)}$ parametrizes all Horikawa surfaces that are minimal
resolutions of double covers of $\mathbb{F}_d$ branched along a simple
curve (only singularities of type $a$, $d$, and $e$)
\[
B \in \left| 6\Delta + (p_g + 3d + 2)F \right|.
\]

For $ p_g \equiv 1 \!\pmod{2}$ (and $p_g\neq 3$) there is only one open stratum in ${\cal H}_{p_g}$ (the strata of higher dimension) which consists of ${\cal H}_{p_g}^{(1)}$; for $p_g \equiv 0 \!\pmod{2}$ with $ p_g \not\equiv 2 \!\pmod{4}$ there is also only one open stratum in ${\cal H}_{p_g}$ which consists of ${\cal H}_{p_g}^{(0)}$, while for $ p_g \equiv 2 \!\pmod{4}$,  there are two open strata in the reducible ${\cal H}_{p_g}$ which consist of ${\cal H}_{p_g}^{(0)}$ and ${\cal H}_{p_g}^{(\frac{p_2}{2}+1)}$, the latter stratum is one of the two irreducible components.

\

Concerning the special cases $p_g=3,6$;  ${\cal H}_3$ is irreducible and ${\cal H}_3={\cal H}_3^{(\infty)}$ which parametrizes all the minimal resolutions of double covers of $\mathbb{P}^2$ branched along a simple curve $B \in |8H|$;  ${\cal H}_6$ is reducible with two irreducible and connected components, one of the components is ${\cal H}_6^{(0)} \;\sqcup\; {\cal H}_6^{(2)}$ while the other is ${\cal H}_6^{(4)} \;\sqcup\; {\cal H}_6^{(\infty)}$,
where ${\cal H}_6^{(\infty)}$ parametrizes all the minimal resolutions of double covers
of $\mathbb{P}^2$ branched along a simple curve $B \in |10H|,$ the open strata are ${\cal H}_6^{(0)}$ and ${\cal H}_6^{(\infty)}$.

\

As a consequence of the precise description we have:

\begin{corollary}
    A generic Horikawa surface is either:
    \begin{enumerate}
        \item A double cover of \(\FF_d\) or \(\PP^2\) branched along a smooth ample divisor.
        \item The minimal resolution of a double cover of \(\FF_d\) branched along \(\Delta_0+D\), where $D$  is a smooth divisor intersecting \(\Delta_0\) transversely.
    \end{enumerate}
\end{corollary}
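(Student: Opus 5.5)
The plan is to read the statement off Horikawa's stratification of ${\cal H}_{p_g}$ recalled above, working one open stratum at a time. By \emph{generic} we mean a general point of an open stratum ${\cal H}_{p_g}^{(d)}$, $d\in\ZZ_{\geqslant 0}\cup\{\infty\}$. Points of ${\cal H}_{p_g}^{(d)}$ are minimal resolutions of double covers of $\FF_d$ (or $\PP^2$) branched along a simple curve $B\in|6\Delta+(p_g+3d+2)F|$ (resp.\ $B\in|8H|$ or $|10H|$). A general member of the relevant family of branch curves should then be as smooth as the linear system allows, and that is the whole argument.

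First, for $\PP^2$ and for $\FF_d$ with $d$ small enough that $L:=6\Delta+(p_g+3d+2)F$ is ample (equivalently base point free), Bertini gives a smooth irreducible $B$. The double cover is then already smooth and minimal, and we land in case 1.

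Next, the numerics. Write $L=6\Delta_0+(p_g+9d+2)F$ and compute $L\cdot\Delta_0=p_g+2-3d$. When this is negative, $\Delta_0$ is a fixed component of $|L|$, and we consider the residual class $L-\Delta_0=5\Delta_0+(p_g+9d+2)F$. Its intersection with $\Delta_0$ is $p_g+2-4d$.

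We use the range $d\leqslant p_g/2+1$, i.e.\ $p_g+2\geqslant 2d$, and check whether $L-\Delta_0$ is base point free, meaning $b\geqslant ad$ with $(a,b)=(5,p_g+9d+2)$. This amounts to $p_g+2\geqslant 4d-9d+\dots$; more simply, $(L-\Delta_0)\cdot\Delta_0\geqslant 0$ iff $p_g+2\geqslant 4d$. For the open strata this is the relevant case: the listed open strata are $d=0,1$ and $d=p_g/2+1$.

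For $d=p_g/2+1$ we have $p_g+2=2d$, so
\[
L\cdot\Delta_0=-d<0,\qquad (L-\Delta_0)\cdot\Delta_0=-2d<0.
\]
This looks bad, so I would redo the computation using the paper's convention $\Delta=\Delta_0+dF$: here $L=6\Delta_0+(p_g+9d+2)F$ gives $L\cdot\Delta_0=-6d+p_g+9d+2=p_g+3d+2>0$. In fact the decomposition should satisfy $B\in|6\Delta_0+\dots|$ with $B\cdot\Delta_0=p_g+2-3d$ in Horikawa's normalization. I would therefore recheck which of $\Delta$ and $\Delta_0$ Horikawa's formula uses.

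This numerical bookkeeping is the main obstacle. The expected outcome is as follows. Either $B\cdot\Delta_0\geqslant 0$, in which case $|B|$ is base point free and Bertini gives case 1. Or $B\cdot\Delta_0<0$ but the residual class $B-\Delta_0$ is base point free with $(B-\Delta_0)\cdot\Delta_0\geqslant 0$. In the latter case a general $B=\Delta_0+D$ has $D$ smooth irreducible by Bertini, and $D$ meets $\Delta_0$ transversally because $|D|$ is base point free (so a general $D$ avoids tangency). This is case 2.

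Finally, irreducibility of $D$ follows from bigness of $D$ (we have $D^2>0$) together with base point freeness, and simplicity of $B$ follows since the only singularities of $\Delta_0+D$ are nodes. Because the canonical resolution of a double cover branched over nodes only is obtained by blowing up the nodes, we recover the minimal resolution described in case 2. Each stratum's general point is therefore of type 1 or 2.
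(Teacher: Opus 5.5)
\textbf{There is a genuine gap: the step that carries the argument is left undone, and the numbers you did compute are wrong.}

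Your overall approach is right, and it is what the paper's one-line deduction ``from the precise description'' amounts to. You take a general branch curve in Horikawa's linear system and apply Bertini either to $|B|$ or, when $\Delta_0$ is a fixed component, to the residual system $|B-\Delta_0|$. The step with actual content is checking, for every admissible $d$, that one of these two systems is base point free. Equivalently, $\Delta_0$ is at most a \emph{simple} fixed component of $|B|$. You never carry this out. With $L=6\Delta_0+(p_g+9d+2)F$ one gets $L\cdot\Delta_0=p_g+3d+2$ and $(L-\Delta_0)\cdot\Delta_0=p_g+4d+2$, not $p_g+2-3d$ and $p_g+2-4d$. Your own numbers at $d=p_g/2+1$ give $(B-\Delta_0)\cdot\Delta_0=-2d<0$. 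That would make $\Delta_0$ a double component of $B$, so $B$ would be non-reduced. You then state the dichotomy only as the ``expected outcome''. As written, the proposal does not establish the statement.

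\textbf{How to close it.} Your suspicion about the normalization is correct: the class written $6\Delta+(p_g+3d+2)F$ in the text must be read as $6\Delta_0+(p_g+3d+2)F$. This is forced by requiring $K_{\FF_d}+B/2$ to be the hyperplane class $\Delta_0+\tfrac{p_g+d-2}{2}F$ of the canonical scroll of degree $p_g-2$. It also matches $B\in|6\Delta_0+8F|$ on $\FF_1$ for $p_g=3$, and $B_k\in|6\Delta_0'+8kF|$ for $p_g=5k-2$, in the proof of Theorem~\ref{max}.

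With this class, $B\cdot\Delta_0=p_g+2-3d$ and $(B-\Delta_0)\cdot\Delta_0=p_g+2-2d$. For $a,b\geqslant 0$, the system $|a\Delta_0+bF|$ is base point free iff $b\geqslant ad$, i.e.\ iff its intersection with $\Delta_0$ is nonnegative. Hence:
\begin{itemize}
\item if $3d\leqslant p_g+2$, then $|B|$ is base point free;
\item otherwise $\Delta_0$ is a simple fixed component, and $|B-\Delta_0|$ is base point free \emph{precisely because} $d\leqslant p_g/2+1$.
\end{itemize}
This check must be done for every stratum ${\cal H}^{(d)}_{p_g}$, not only the open ones. In the paper, ``generic'' means a general point of each stratum, and that is how Theorem~\ref{generic} uses the corollary.

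\textbf{Two smaller corrections.}
\begin{itemize}
\item ``Ample'' and ``base point free'' are not equivalent on $\FF_d$. When $3d=p_g+2$ (e.g.\ $p_g=4$, $d=2$), the general $B$ is smooth and irreducible but disjoint from $\Delta_0$, hence not ample. So case~1 only yields ``smooth irreducible''. That is all Corollary~\ref{cor:VanishingCover} needs, but it is weaker than the corollary's wording.
\item When $d=p_g/2+1$, one has $D\cdot\Delta_0=0$. So $D$ is disjoint from $\Delta_0$, and the double cover in case~2 is already smooth.
\end{itemize}
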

As a direct consequence of this description and of corollaries \ref{cor:VanishingCover} and \ref{cor:VanishingCover2}, we therefore obtain:

\begin{theorem} \label{generic} The generic Horikawa surface $X$ in any of the strata ${\cal H}_{p_g}^{(d)}$ of the Horikawa moduli spaces
${\cal H}_{p_g}$ satisfies
\[
\lambda(\Omega_X^1) = -\infty .
\]
\end{theorem}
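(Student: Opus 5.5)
The plan is to use the preceding corollary, which describes the generic member of each stratum ${\cal H}_{p_g}^{(d)}$, and then apply the vanishing results of this section case by case. Since $\lambda(\Omega^1_X)=-\infty$ is equivalent to $H^0(X,S^m\Omega^1_X)=0$ for all $m\geqslant 1$, it suffices to establish this vanishing for the surfaces in that description.

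First, I would justify the description itself. For each stratum, the branch locus lies in the linear system $|6\Delta+(p_g+3d+2)F|$ on $\FF_d$, or in $|8H|$ resp. $|10H|$ on $\PP^2$ when $d=\infty$. Write this class as $6\Delta_0+bF$ with $b=p_g+9d+2-3d$; concretely, $6\Delta=6\Delta_0+6dF$.
\begin{itemize}
\item If $b\geqslant 6d$, the system is base point free by Hartshorne V.2.18. By Bertini, the general member is then a smooth irreducible curve.
\item If $b<6d$, the intersection number with $\Delta_0$ is negative, so $\Delta_0$ is a fixed component. One checks that the moving part $5\Delta_0+bF$ then satisfies $b\geqslant 5d$, hence is base point free, and its general member $D$ is smooth, irreducible and transverse to $\Delta_0$. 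This is Horikawa's description, and the boundary cases are exactly the ones in the corollary.
\end{itemize}
Since a generic point of ${\cal H}_{p_g}^{(d)}$ corresponds to a generic branch curve in the relevant linear system, the generic surface falls into case 1 or case 2 of the corollary. In case 1 the double cover is already smooth, so it is its own minimal resolution.

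Next, in case 1 I apply \cref{cor:VanishingCover} directly, since the branch curve is smooth and irreducible. This gives $H^0(X,S^m\Omega^1_X)=0$ for all $m\geqslant1$. In case 2 the branch curve is the simple normal crossing divisor $\Delta_0+D$. Its singularities are nodes, which produce $A_1$ singularities on the cover. Here \cref{cor:VanishingCover2} gives the same vanishing on the minimal resolution, which is the Horikawa surface.

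One point needs care in case 2. I must check that the hypotheses of \cref{prop:VanishingFd2Comp} (used inside \cref{cor:VanishingCover2}) hold, namely $D^2\neq0$ and $D$ not a fiber. With $D\sim a\Delta_0+bF$, $a=5$ and $b\geqslant 5d$, we get $D^2=a(2b-ad)>0$. So $D$ is neither a fiber nor $\Delta_0$, and for $d=0$ it is not a section class.

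The main obstacle is the genericity bookkeeping in the first step. One has to verify, stratum by stratum, that the generic branch curve has no singularities beyond those forced by the fixed component $\Delta_0$. This can be done by the Bertini argument above together with a count of $\Delta_0\cdot(\text{class})$. Everything after that is a direct citation of the corollaries.
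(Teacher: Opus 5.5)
Your proposal is the paper's proof: the paper deduces the theorem in one line from the corollary describing the generic member of each stratum, applying \cref{cor:VanishingCover} in the smooth case and \cref{cor:VanishingCover2} in the $\Delta_0+D$ case, exactly as you do; your check of the hypotheses of \cref{prop:VanishingFd2Comp} is a harmless addition. One slip occurs in your optional justification of that description. The branch class is $6\Delta_0+(p_g+3d+2)F$ with $\Delta_0$ the negative section; the paper's $6\Delta$ is a typo for $6\Delta_0$. So $b=p_g+3d+2$, and the case $b<6d$ genuinely occurs, namely when $3d>p_g+2$. Your value $b=p_g+9d+2-3d$ would make your second case vacuous, whereas with the correct $b$ both of your bullets are right.
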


\

\ 

\

\section{Maximal cotangent dimension and Campana fibrations of general type}

\ 
Recall the following statement appearing in \cite{sakai}.

\begin{proposition}[\cite{sakai}, Example $4$]\label{bignessfib}
Let $f: X \to C$ be a fibration on a projective smooth surface $X$ over a smooth curve $C$ with  $g(C) \geqslant 2$ and the general fiber of genus $\geqslant 2$. Then 
\[
\lambda(\Omega_X^1)=2.
\]
\end{proposition}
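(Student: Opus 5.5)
We will establish bigness of $\Omega^1_X$ by exhibiting, for every $m$, a subspace of $H^0(X,S^m\Omega^1_X)$ whose dimension grows like $m^3$. The natural source of such sections is the injection $f^*\Omega^1_C\hookrightarrow\Omega^1_X$, which gives a saturated rank one subsheaf $L:=f^*K_C\subset\Omega^1_X$ (after saturation, $f^*K_C$ twisted by the non-reduced parts of the fibers). Passing to a resolution and the relatively minimal model is harmless, by the birational invariance of the symmetric plurigenera recalled in \cref{cd}.

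\textbf{Step 1 (a filtration).} Let $Q$ be the quotient $\Omega^1_X/L$, a torsion-free sheaf of rank one. For a general fiber $F$, $Q|_F\simeq K_F$ away from finitely many points. The filtration of $S^m\Omega^1_X$ with graded pieces $L^{\otimes(m-i)}\otimes Q^{\otimes i}$ does not directly provide sections. Instead we work on $P:=\PP(\Omega^1_X)$ with $\mathcal{O}_P(1)$, and use the effective divisor $P_L\subset P$ corresponding to the quotient $\Omega^1_X\to Q$, i.e. the section of directions tangent to the fibers. One has $\mathcal{O}_P(1)\simeq \pi^*Q\otimes\mathcal{O}_P(P_L)$ up to torsion corrections, or dually $\mathcal{O}_P(1)\otimes\pi^*L^{-1}$ is effective. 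Hence
\[
\mathcal{O}_P(m)=\mathcal{O}_P(a)\otimes \mathcal{O}_P(m-a)\ \supseteq\ \pi^*L^{\otimes a}\otimes\mathcal{O}_P(m-a).
\]

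\textbf{Step 2 (volume estimate).} The goal is to show $\mathcal{O}_P(1)$ is big by writing it as a sum of a big class and an effective class. The classical argument of Sakai runs as follows. Since $g(C)\ge2$, $L=f^*K_C$ is nef with $L^2=0$ and $L\cdot K_X>0$. Moreover, $K_X$ is big because $X$ is of general type: $g(C)\ge2$ and the fibers have genus $\ge2$. We then compute, via Riemann--Roch on $P$ or via the Hirzebruch--Riemann--Roch count for the subsheaves $\bigoplus_{i} L^{m-i}\otimes Q^{i}$, that
\[
h^0\Bigl(X,\ \bigoplus_{i\le \varepsilon m} L^{\otimes(m-i)}\otimes Q^{\otimes i}\Bigr)
\]
grows like $m^3$, provided sections of each piece lift to $S^m\Omega^1_X$. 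Such lifting is automatic for the top piece $L^{\otimes m}=S^m L\subset S^m\Omega^1_X$. More generally, $L^{\otimes(m-i)}\otimes S^i\Omega^1_X\subset S^m\Omega^1_X$, so it suffices to produce many sections of $L^{\otimes(m-i)}\otimes K_X^{\otimes i}$-type bundles. Concretely, we use the inclusions $L^{\otimes (m-2j)}\otimes (L\otimes Q)^{\otimes j}\to S^m\Omega^1_X$ given by multiplying by powers of the $2$-form $\det$: $L\otimes Q\simeq K_X$ (up to fiber components) and $K_X\hookrightarrow S^2\Omega^1_X$? That last inclusion fails. So instead, take sections $s\in H^0(X,L^{\otimes(m-i)}\otimes S^i\Omega^1_X)$ and bound below $h^0(S^i\Omega^1_X\otimes L^{m-i})$ by Riemann--Roch. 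The $h^2$ term vanishes by Bogomolov-type vanishing, since $L^{m-i}\otimes S^i\Omega^1_X$ has no cosections once $L$ has positive degree on the curves not contracted. The leading term of $\chi$ then contains $\tfrac{i^2}{2}(m-i)\,L\cdot K_X$, plus $(m-i)^2 L^2=0$ and $\tfrac{i^3}{6}(c_1^2-c_2)$. Choosing $i=\varepsilon m$ with $\varepsilon$ small but fixed makes $\tfrac{\varepsilon^2(1-\varepsilon)}{2}L\cdot K_X+\tfrac{\varepsilon^3}{6}(c_1^2-c_2)>0$. This gives growth of order $m^3$.

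\textbf{Main obstacle.} The delicate step is the vanishing of $H^2(X,S^i\Omega^1_X\otimes L^{m-i})=H^0(X,S^i T_X\otimes K_X\otimes L^{-(m-i)})^\vee$. We will prove it by restricting to a general fiber $F$. There $T_X|_F$ is an extension of $\mathcal{O}_F$ by $T_F$, and $L|_F$ is trivial. Hence the restriction equals $H^0(F,S^iT_X|_F\otimes K_F)$, which need not vanish. So we instead use the slope of $L^{-(m-i)}$ on horizontal curves: a nonzero section would give a nonzero map $L^{m-i}\to S^iT_X\otimes K_X$. Restricting to a general smooth curve in a very ample linear system $|H|$ and using that $L\cdot H\to\infty$ linearly in $m-i$, while the slopes of $S^iT_X\otimes K_X|_H$ grow only linearly in $i$, yields vanishing for $i\le\varepsilon m$. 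If this estimate proves awkward, we would fall back on Bogomolov's semistability argument for $\Omega^1_X$ as in \cite{bogomolov_stability}.
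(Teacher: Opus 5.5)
Your argument is correct once the false starts are removed, but it takes a different route from the paper's. The paper, following Sakai, does not compute Euler characteristics. It applies the structural Lemma \ref{fib} to the generically injective map $f^*K_C\to\Omega^1_X$, with $K_C$ big on $C$, to get $\lambda(\Omega^1_X)=1+\lambda(\Omega^1_X|_{X_c},X_c)$. It then uses $\lambda(\Omega^1_X|_{X_c},X_c)=1$ on a general fiber; this holds because $\Omega^1_X|_{X_c}$ is an extension of $K_{X_c}$ by $\mathcal{O}_{X_c}$ of positive degree $2g(X_c)-2$.

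Your proof instead works globally on $X$. It combines:
\begin{itemize}
\item the injection $L^{\otimes(m-i)}\otimes S^i\Omega^1_X\hookrightarrow S^m\Omega^1_X$, valid because the symmetric algebra is locally a domain;
\item the identity $\chi(S^i\Omega^1_X\otimes L^{m-i})=\chi(S^i\Omega^1_X)+\tfrac{(i^2-1)(m-i)}{2}K_X\cdot L$, which follows from $L^2=0$ and confirms your leading term;
\item the positivity $K_X\cdot L=(2g(C)-2)(2g(F)-2)>0$, so $i=\varepsilon m$ with $\varepsilon$ small gives $\chi\geq c\,m^3$;
\item the vanishing of $H^2$, after which $h^0\geq\chi-h^2$ finishes.
\end{itemize}

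The paper's route is conceptual and yields the exact value of $\lambda$, not just bigness. It is also what carries over verbatim to Proposition \ref{orbibig}, by replacing $f^*K_C$ with $f^*\mathcal{O}_C(\ell(K_C+\Delta(f)))\to S^\ell\Omega^1_X$. Your route is self-contained and elementary, using only Riemann--Roch and degree counts on curves. It also gives an explicit lower bound for the volume of $\mathcal{O}_{\PP(\Omega^1_X)}(1)$ in terms of $K_X\cdot f^*K_C$ and $c_1^2-c_2$.

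To make the write-up a proof, do the following.
\begin{itemize}
\item Discard Step 1 and the first half of Step 2: the $\mathcal{O}_P(P_L)$ description, the attempted $K_X\hookrightarrow S^2\Omega^1_X$, and the vague ``Bogomolov-type vanishing''.
\item Drop the claim that $K_X$ is big; it is never used.
\item Make the final paragraph uniform. The curve $H$ must be chosen after the section you want to kill, so your slope bound on $S^iT_X|_H$ must not depend on $H$. The sequence $0\to T_H\to T_X|_H\to\mathcal{O}_H(H)\to 0$ gives this: $S^iT_X|_H$ is filtered by line bundles of degree at most $i\max(H^2,2-2g(H))$, the same for every smooth member of $|H|$.
\item Conclude the vanishing. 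A nonzero section restricts nontrivially to a general $H$. That forces $(m-i)\,L\cdot H\leq i\max(H^2,2-2g(H))+K_X\cdot H$, which fails for $i\leq\varepsilon m$ with $\varepsilon$ small and $m\gg0$, since $L\cdot H>0$.
\end{itemize}
No fallback to Bogomolov's semistability is needed.
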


Sakai's fibration criterion for $\lambda(\Omega^1_X) = 2$ for a fibered surface of general type $f : X \to C$
requires the base curve $C$ to be of general type.  There are no such fibrations for minimal surfaces of general type in the region $K^2 \leqslant 2\chi$.  
As observed in \cref{intro}, if $K^2 \leqslant 2\chi$ the base $C$ of a fibration has $g(C) = 0,1$, moreover $g(C) = 0$ if $K^2 < 2\chi$.

\

Sakai's result can be extended to allow the base of the fibration not to be of general type, if one considers an orbifold structure on the base curve coming from the multiple fibers.  

\

Let $X$ be a fibered smooth surface over a smooth curve $C$, $f : X \to C$.
For a point $p \in C$, consider the fiber $F_p = f^{-1}(p)$ and its irreducible decomposition

\[
F_p = \sum_{i=1}^{k} n_i F_{p,i},
\]

where each $F_{p,i}$ is an irreducible component and $n_i$ is its multiplicity. 

\begin{definition} 
\begin{itemize}
\item The classical multiplicity of $F_p$ is
$m^*(F_p)=\gcd\{n_i\}$. A fiber is multiple if $m^*(F_p)\geqslant 2$.

\item The non-classical multiplicity of $F_p$ is $m(F_p)=\inf\{n_i\}$. If $m(F_p)\geqslant 2$, $F_p$ will be called a Campana multiple fiber.

\item The classical orbifold base of $f$ is the $\mathbb{Q}$-divisor
$$\Delta^*(f):=\sum_{c \in C} \left(1-\frac{1}{m^*(c)}\right)c.$$

\item The non-classical orbifold base of $f$ is the $\mathbb{Q}$-divisor
$$\Delta(f):=\sum_{c \in C} \left(1-\frac{1}{m(c)}\right)c.$$

\item The classical orbifold canonical divisor of the base $C$ of the fibered surface $X$ is
\[
K^*_{C,\mathrm{orb}}
=
K_C+\Delta^*(f).
\]
\item The non-classical orbifold canonical divisor of the base $C$ of the fibered surface $X$ is
\[
K_{C,\mathrm{orb}}
=
K_C+\Delta(f).
\]
\item The base $C$ is of classical orbifold general type if
\[
\kappa(C,K^*_{C,\mathrm{orb}})=1.
\]
\item The base $C$ is of non-classical orbifold general type if
\[
\kappa(C,K_{C,\mathrm{orb}})=1.
\] 
\item The fibration $f:X\to C$ is a Campana fibration of general type if $C$ is of non-classical orbifold general type

\end{itemize}
\end{definition}

\medskip

We start by considering the extension of Sakai's result  to fibrations whose base $C$ is of classical orbifold general type and observe that such an extension still can not hold for surfaces  with $K^2 < 2\chi$.

\begin{proposition} \label{orbsakai}
Let $f: X \to C$ be a fibration of a smooth projective surface $X$ over a base $C$ of classical orbifold general type which has the general fiber of genus $\geqslant 2$. Then
\[
\lambda(\Omega_X^1)=2.
\]
\end{proposition}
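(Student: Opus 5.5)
The plan is to reduce to Sakai's criterion (\cref{bignessfib}) by a base change that kills the classical multiple fibers. Since $C$ is of classical orbifold general type, $\deg K^*_{C,\mathrm{orb}}>0$. By the classical theory of orbifold curves (Fox's solution of Fenchel's problem, or Bundgaard--Nielsen), there is a finite Galois cover $\phi:C'\to C$ whose ramification index at each point $c\in C$ equals $m^*(c)$; the only exceptions, where this fails, are bad orbifolds on $\PP^1$ with one or two cone points of distinct orders, and these have negative orbifold degree, so they are excluded here. Riemann--Hurwitz gives $2g(C')-2=\deg\phi\cdot\deg K^*_{C,\mathrm{orb}}>0$, so $g(C')\geqslant 2$.

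Next, let $X'$ be the normalization of $X\times_C C'$ and $\widetilde{X}'$ a resolution, with induced fibration $f':\widetilde{X}'\to C'$. Its general fiber is isomorphic to that of $f$, hence has genus $\geqslant 2$. \cref{bignessfib} then gives $\lambda(\Omega^1_{\widetilde{X}'})=2$.

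The key step is to descend bigness back to $X$. Since the ramification of $\phi$ over each $c$ equals the gcd $m^*(c)$ of the multiplicities, the standard local computation shows that the normalized base change $X'\to X$ is étale in codimension one. Over a neighborhood of $c$, where the fiber is $m^*(c)\cdot G$, adjoining an $m^*(c)$-th root of the local parameter of $c$ produces an étale cover away from the finitely many points where components of $G$ meet or where $G$ is non-reduced with non-coprime structure.

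I would then argue on the open set $X^\circ\subset X$ over which $\psi:X'\to X$ is étale, whose complement is finite. Choose $\mathrm{Gal}(C'/C)$-invariant symmetric differentials: take norms, as in \cref{lemmavan}, i.e.\ products over the Galois group of translates of a section of $S^m\Omega^1_{\widetilde X'}$ twisted down by an ample divisor. These descend to sections of $S^{Nm}\Omega^1$ on $X^\circ$, and by Hartogs they extend to $X$. To transfer bigness rather than mere nonvanishing, use that $\mathbb{P}(\Omega^1_{X'^\circ})\to\mathbb{P}(\Omega^1_{X^\circ})$ is finite étale and $\mathcal{O}(1)$ pulls back to $\mathcal{O}(1)$. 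Bigness of $\mathcal{O}(1)$ upstairs, realized by sections on the open part (extension across finitely many points is automatic for $h^0$ on $\widetilde X'$ versus $X'^\circ$), implies bigness downstairs via the norm/trace argument for finite maps. Alternatively, one can invoke the étale invariance property listed in \cref{cd} in its quasi-étale form, together with the fact that invariants of big sections of bounded codimension complement give Iitaka dimension $3$ on $\mathbb{P}(\Omega^1_X)$.

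The main obstacle is this descent step. One must ensure that the singularities of $X'$ (quotient-type points over the fiber singularities) do not destroy the comparison between $H^0(\widetilde X',S^m\Omega^1)$ and reflexive differentials on $X'$. I would handle this by working only with sections on $X'^\circ$: sections from $\widetilde X'$ restrict to $X'^\circ$ injectively, and this suffices, since growth of order $m^3$ on $X'^\circ$ yields growth of order $m^3$ for invariant sections on $X^\circ$. Those invariant sections then extend to all of $X$ by Hartogs, $X$ being smooth.
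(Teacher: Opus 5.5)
Your argument is correct and follows the paper's architecture: an orbifold-étale cover $C'\to C$ with ramification exactly $m^*(c)$, normalized base change, Sakai's criterion on the cover, then descent. The descent step is where you diverge, and the obstacle you single out does not actually occur.

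The paper simply notes that $X'$ is smooth and $X'\to X$ is étale, so étale invariance of $\lambda_\Omega$ concludes at once. This holds at every point. Since $X$ is smooth, near any point of a fiber $F_c=m\,G$ with $m=m^*(c)$, the divisor $G$ is Cartier, $G=(g=0)$, so $f^*t=u\,g^m$ with $u$ a unit. Writing $u=v^m$ locally gives $s^m-ug^m=\prod_{\zeta^m=1}(s-\zeta vg)$. The normalization is therefore locally $m$ disjoint copies of the chart, including where components of $G$ meet or $G$ is non-reduced. So $X'$ has no quotient singularities and no resolution $\widetilde X'$ is needed.

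Your quasi-étale descent is still valid and more robust, since it would survive a cover that is only étale in codimension one. However, the step ``growth on $X'^\circ$ yields growth of invariant sections'' is not delivered by norms, which only give nonvanishing. You need the finite-map bigness criterion on a projective model. For instance, $\psi_*\mathcal O_{X'}$ is reflexive, hence locally free on the smooth surface $X$, so it embeds in $\mathcal O_X(A)^{\oplus N}$ for some ample $A$. Hartogs plus the projection formula then give $h^0(X'^\circ,S^m\Omega^1)=h^0(X,S^m\Omega^1_X\otimes\psi_*\mathcal O_{X'})\leqslant N\,h^0(X,S^m\Omega^1_X\otimes\mathcal O_X(A))$. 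Cubic growth of the right-hand side forces $\mathcal O_{\PP(\Omega^1_X)}(1)$ to be big.
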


\begin{proof} Let $\{F_{p_i}\}_{i=1,\dots,\ell}$ be the multiple fibers of $f$, with
\[
m_i=m^*(F_{p_i}).
\]
Since the pair $(C, \Delta^*(f))$ is of general type, we can find a branched cover $g\colon C'\to C$ ramified only over the $p_i$ and with ramification indices over the $p_i$ equal to $m_i$ \cite{Nam87} (Thm 1.2.15). We say that $g\colon C'\to C$ is orbifold-\'etale.
Let $X'$ be the normalization of the fiber product $C' \times_C X$.
The surface $X'$ is smooth, \'etale over $X$ and fibered over $C'$. By construction
\[
K_{C'} = g^*\!\left( K_C + \sum_{i=1}^\ell \left(1-\frac{1}{m_i}\right)x_i \right)
       = g^*(K^*_{C,\mathrm{orb}}).
\]

Hence $C'$ is of general type and Sakai's fibration theorem gives that $X'$
has $\lambda(\Omega_{X'}^1)=2$. $\lambda(\Omega_X^1)=2$ follows from the
\'etale invariance of the cotangent dimension.
\end{proof}

\medskip

This result can be applied to surfaces of general type $X$  fibered over $C=\mathbb{P}^1$, but it cannot hold if $X$ is in the region $K^2 \leqslant 2\chi$.
If the base $C$ is of classical orbifold general type, we saw in the proof above
that $X$ has the irregular \'etale cover $X'$. The slope
$K^2/\chi$ is an \'etale invariant and the irregularity of $X'$ implies $X$ lies in the region
$\frac{K^2}{\chi}> 2.$

\

The extension below of the previous proposition to the non-classical case is much more interesting since it can be applied in cases where $K^2 < 2\chi$. As in the proof of Sakai's criterion, the key Lemma \cite{sakai} is the following.

\begin{lemma}\label{fib}
Let $f: X \to Y$ be a fibration between compact complex manifolds. Let $E$, $F$ be vector bundles on $X$ and $Y$ respectively. Assume that $\lambda(F,Y)= \dim Y$ and that there is a generically injective homomorphism $\sigma : f^*F \to E$. Then
$$\lambda(E,X)=\dim Y + \lambda(E_y, X_y)$$ 
\end{lemma}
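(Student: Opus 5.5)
The identity splits into two inequalities. The upper bound $\lambda(E,X)\leqslant \dim Y+\lambda(E_y,X_y)$ is the easy addition formula of \cref{cd}; I would re-derive it on the projective bundle. The substance is the lower bound, and for it I would work on $Z:=\PP(E)$. Let $L:=\mathcal{O}_{\PP(E)}(1)$, let $g\colon Z\to X\to Y$ be the composite, and note that for a general $y\in Y$ the fiber $Z_y=\PP(E|_{X_y})$ satisfies $L|_{Z_y}=\mathcal{O}_{\PP(E_y)}(1)$. Since $\dim Z_y=\dim X_y+r-1$, it suffices to prove
\[\kappa(Z,L)=\dim Y+\kappa(Z_y,L|_{Z_y}),\]
because subtracting $r-1$ on both sides gives the statement.

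\textbf{Upper bound.} I would apply the usual easy addition $\kappa(Z,L)\leqslant \dim Y+\kappa(Z_y,L|_{Z_y})$ to the fibration $g$. This follows from the fact that the restriction of the Iitaka map of $L$ to a general fiber factors through the Iitaka map of $L|_{Z_y}$.

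\textbf{Transferring bigness of $F$ to $L$.} Fix an ample line bundle $A$ on $Y$. Since $\mathcal{O}_{\PP(F)}(1)$ is big, Kodaira's lemma gives some $a>0$ with $H^0(\PP(F),\mathcal{O}(a)\otimes p^*A^{-1})=H^0(Y,S^aF\otimes A^{-1})\neq 0$. Pulling such a section $t$ back and composing with $S^a\sigma\colon S^af^*F\to S^aE$ gives a section of $S^aE\otimes f^*A^{-1}$, that is, a section $s$ of $L^{a}\otimes g^*A^{-1}$ on $Z$.

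This section is nonzero. The section $t$ is nonzero at a general point of $Y$. The map $\sigma$ is injective at a general point of $X$, hence so is $S^a\sigma$. Because $f$ is surjective, the general point of $X$ lies over the general point of $Y$, so the image of $t$ does not vanish there. Thus $L^{a}=g^*A+(\text{effective})$, and for all $N,j\geqslant 0$ we get an inclusion $H^0(L^N\otimes g^*A^{j})\hookrightarrow H^0(L^{N+ja})$.

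\textbf{Main step (a Fujita-type lemma).} Put $k=\kappa(Z_y,L|_{Z_y})$. I need to show that for suitable $N$ and $j$ one has $\kappa(L^N\otimes g^*A^{j})\geqslant \dim Y+k$. Given this, the inclusion above yields $\kappa(L)\geqslant \dim Y+k$.

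I would choose $N$ so that $|L^N|_{Z_y}|$ realises the Iitaka dimension $k$ on a general fiber. The sheaf $g_*L^N$ is torsion free of rank $h^0(Z_y,L^N)$ on a dense open subset. For $j\gg0$, Serre's theorem makes $g_*L^N\otimes A^{j}$ globally generated on that open set, so global sections of $L^N\otimes g^*A^{j}$ restrict onto all of $H^0(Z_y,L^N)$ for general $y$. Multiplying these by pullbacks of sections of a very ample multiple of $A$, which separate points of $Y$, the rational map defined by $L^{2N}\otimes g^*A^{j'}$ separates general fibers. On each general fiber it restricts to a map with image of dimension $k$. Hence its image has dimension $\dim Y+k$.

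I expect the main obstacle to be this lemma, namely making the restriction-surjectivity and the dimension count rigorous in the compact complex manifold setting. I would first handle it in the projective case as above; otherwise I would argue via the coherent direct images on the (possibly non-algebraic) base, using that $A$ exists because $Y$ carries a big bundle $F$, which makes $Y$ Moishezon.
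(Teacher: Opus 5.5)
The paper gives no proof of this lemma. It is quoted from Sakai, and the paper only uses it, through Lemma~\ref{fib2} in Proposition~\ref{orbibig}, with $X$ a projective surface and $Y=C$ a curve.

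Your argument is the standard one behind Sakai's lemma. You pass to $L=\mathcal{O}_{\PP(E)}(1)$ on $Z=\PP(E)$. You get a nonzero section of $L^{a}\otimes g^*A^{-1}$ from Kodaira's lemma on $\PP(F)$ and $S^a\sigma$. Then you run the Iitaka--Fujita argument: $H^0(Z,L^N\otimes g^*A^j)\to H^0(Z_y,L^N|_{Z_y})$ is surjective for general $y$, and $g^*|A^c|$ separates the fibers. For projective $Y$ this is correct, even when $X$ is only compact complex (Grauert's coherence and generic base change, plus GAGA on $Y$, replace the algebraic statements). So it covers everything the paper needs.

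Two small repairs are needed:
\begin{itemize}
\item Before choosing $N$, you should note that your section $s$ restricts nontrivially to a general $Z_y$. This gives $k\geqslant 0$, which you need for $N$ to exist.
\item The products $s'\cdot g^*u$ are sections of $L^{N}\otimes g^*A^{j+c}$, not of $L^{2N}\otimes g^*A^{j'}$. This slip is harmless.
\end{itemize}

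The one real error is the sentence you rely on for the lemma in its stated generality, compact complex manifolds. Bigness of $F$ does make $\PP(F)$, and hence $Y$, Moishezon. But a Moishezon manifold need not carry an ample line bundle. A compact complex manifold with an ample line bundle is projective by Kodaira's embedding theorem, and Moishezon non-projective manifolds exist in every dimension $\geqslant 3$ (Hironaka). So ``fix an ample $A$ on $Y$'' and the appeal to Serre's theorem fail for such $Y$.

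The fix is to change the base rather than look for $A$ on $Y$:
\begin{itemize}
\item Take a projective modification $\mu\colon Y'\to Y$.
\item Let $\nu\colon X'\to X$ come from a resolution of the main component of $X\times_Y Y'$, chosen to be an isomorphism over the locus where $\mu$ is one.
\item Replace $E$, $F$, $\sigma$ by $\nu^*E$, $\mu^*F$, $\nu^*\sigma$.
\end{itemize}
The maps $\PP(\nu^*E)\to\PP(E)$ and $\PP(\mu^*F)\to\PP(F)$ are modifications of manifolds that pull $\mathcal{O}(1)$ back to $\mathcal{O}(1)$, so the Iitaka dimensions are unchanged. Hence $\lambda(E,X)$ is preserved and $\lambda(\mu^*F,Y')=\dim Y$. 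The general fibers are unchanged, $X'\to Y'$ still has connected fibers, and $\nu^*\sigma$ is still generically injective. Your projective-base argument then applies verbatim to $X'\to Y'$.
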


We will use the following particular case. 
\begin{lemma}\label{fib2}
Let $f: X \to Y$ be a fibration between compact complex manifolds. Let $E$ be a vector bundle on $X$ and $F$ line bundle on $Y$. Assume that $\lambda(F,Y)= \dim Y$ and that there is a generically injective homomorphism $\sigma : f^*F \to S^m E$. Then
$$\lambda(E,X)=\dim Y + \lambda(E_y, X_y)$$ 
\end{lemma}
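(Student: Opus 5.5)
The plan is to deduce Lemma \ref{fib2} from Lemma \ref{fib}, which is stated just before it, by passing from $E$ to a symmetric power. The starting observation is a standard identity: for a vector bundle $E$ of rank $r$ and any $m\geqslant 1$,
\[
\lambda(S^mE,X)+(\operatorname{rk} S^mE-1)=\kappa(\PP(S^mE),\mathcal{O}(1)).
\]
We relate this to $\lambda(E,X)$ through the Veronese embedding $v_m:\PP(E)\hookrightarrow \PP(S^mE)$, which satisfies $v_m^*\mathcal{O}_{\PP(S^mE)}(1)=\mathcal{O}_{\PP(E)}(m)$.

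We then check that $\lambda$ is insensitive to replacing $E$ by $S^mE$. Here $\lambda$ is read as the growth rate of $h^0(X,S^k E)$: it is the order $a$ such that $h^0(X,S^kE)\asymp k^{a+r-1}$. The sections of $S^k(S^mE)$ surject onto those of $S^{km}E$ through the multiplication map $S^k(S^mE)\to S^{km}E$. Indeed this map is split, since we are in characteristic $0$, so $S^{km}E$ is a direct summand of $S^k(S^mE)$. The growth of $h^0(S^{km}E)$ in $k$ is governed by $\lambda(E)$, but the ambient projective bundle $\PP(S^mE)$ has larger fibre dimension, so the shifts by $\operatorname{rk}-1$ do not match directly.

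To avoid this bookkeeping, the cleaner route is to reprove Lemma \ref{fib} in the form we need, following Sakai. Since $\sigma:f^*F\to S^mE$ is generically injective, we get for every $k$ an injection
\[
H^0(X,S^jE)\otimes H^0(Y,F^{k}) \hookrightarrow H^0\bigl(X,S^{j+km}E\bigr).
\]
The map is multiplication by $\sigma^k$ (the image of $F^k$ in $S^{km}E$), and it is injective away from the degeneracy locus of $\sigma$. Because $F$ is big on $Y$, $h^0(Y,F^k)\sim k^{\dim Y}$. On the fibres, $h^0$ of $S^jE_y$ restricted along the family grows like $j^{\lambda(E_y,X_y)+r-1}$.

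The lower bound is obtained geometrically on $P=\PP(E)$, following Sakai's argument. The section $\sigma$ gives an effective divisor in $|\mathcal{O}_P(m)\otimes \pi^*f^*F^{-1}|$. Hence, for $L=\mathcal{O}_P(1)$ and large $k$,
\[
\mathcal{O}_P(km)\supseteq \pi^*f^*F^{k} \quad\text{up to an effective divisor,}
\]
so $\kappa(P,L)\geqslant \kappa(P,L^{m}\otimes (f\pi)^*F^{-1}) $ combined with the bigness of $F$ on $Y$. The composite $h=f\circ\pi:P\to Y$ is a fibration whose general fibre is $\PP(E_y)$. Kawamata-type subadditivity in the easy direction, $\kappa(P,L)\geqslant \kappa(P_y,L|_{P_y})+\dim Y$, holds when $L^{m}$ dominates the pullback of a big line bundle from the base. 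This gives $\kappa(P,L)\geqslant \kappa(\PP(E_y),\mathcal{O}(1))+\dim Y$. The upper bound is the easy addition inequality $\kappa(P,L)\leqslant \kappa(P_y,L|_{P_y})+\dim Y$. Subtracting $r-1$ from both sides yields the equality of $\lambda$'s.

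The main obstacle is the lower-bound step: showing that $L^{m}\geqslant h^*(\text{big})$ gives $\kappa(P,L)\geqslant \kappa(P_y,L_y)+\dim Y$. I would handle it with the standard argument. Take $A$ ample on $Y$ with $F^{k_0}=A+(\text{effective})$, so that sections of $L^{mk_0 j}\otimes h^*A^{-j}$ times sections of $h^*A^{j}$ lie in $H^0(L^{mk_0j})$. Then count, using that $L^{N}\otimes h^*A^{-j}$ restricted to fibres still has $\asymp N^{\kappa(L_y)}$ sections and $h_*$ of it becomes globally generated after twisting by a bounded multiple of $A$. Alternatively, I would simply invoke Lemma \ref{fib} with $F$ replaced by $F$ viewed inside $S^mE$ and take care of the rank shift as above.
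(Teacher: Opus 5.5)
Your reduction is exactly the paper's. On $P=\PP(E)$ with $g=f\circ\pi\colon P\to Y$, the map $\sigma$ gives a nonzero section of $\mathcal{O}_P(m)\otimes g^*F^{-1}$. The gap is the step you flag as the main obstacle and only sketch: the inequality $\kappa(P,\mathcal{O}_P(1))\geqslant \dim Y+\kappa(\PP(E_y),\mathcal{O}(1))$. The missing observation is that this is Lemma~\ref{fib} itself, applied to the fibration $g$ with the \emph{rank-one} bundle $\mathcal{O}_P(m)$ in the role of $E$. For a line bundle there is no rank shift: $\lambda(\mathcal{O}_P(m),P)=\kappa(P,\mathcal{O}_P(m))=\kappa(P,\mathcal{O}_P(1))$, and likewise on the fibre $g^{-1}(y)=\PP(E_y)$. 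So Lemma~\ref{fib} gives $\kappa(P,\mathcal{O}_P(1))=\dim Y+\kappa(\PP(E_y),\mathcal{O}(1))$, both inequalities at once, and subtracting $r-1$ from both sides finishes. That single application is the paper's whole proof.

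What you offer instead does not close the gap.
\begin{itemize}
\item \emph{Lemma~\ref{fib} applied to $S^mE$ on $X$.} This yields an identity for $\lambda(S^mE)$ that cannot be transported to $E$, because the relation between $\kappa(\PP(S^mE),\mathcal{O}(1))$ and $\kappa(\PP(E),\mathcal{O}(1))$ is not governed by ranks. For $E=\mathcal{O}\oplus\mathcal{O}(-1)$ on $\PP^1$ both Iitaka dimensions are $0$, so $\lambda(E)=-1\neq -2=\lambda(S^2E)$. For a trivial bundle on a point, by contrast, the two Iitaka dimensions differ by the rank difference.
\item \emph{Your hand-made lower bound.} You assume $h_*$ of the relevant sheaves becomes globally generated after twisting by a multiple of $A$ bounded independently of $N$. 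Serre's theorem only gives a twist depending on $N$, so the counting is unjustified as stated. The standard argument fixes one $N$ realising $\kappa$ on the general fibre, takes $j\gg 0$, and bounds the dimension of the image of the rational map instead. Also, an ample $A$ need not exist, since $Y$ is only assumed compact complex (hence Moishezon).
\item \emph{The displayed injection.} The map $H^0(X,S^jE)\otimes H^0(Y,F^k)\hookrightarrow H^0(X,S^{j+km}E)$ is not injective in general. Take $X=Y=\PP^1$, $f=\mathrm{id}$, $E=F=\mathcal{O}(1)$, $m=j=k=1$: the source has dimension $4$ and the target dimension $3$. Only $H^0(X,S^jE\otimes f^*F^k)\hookrightarrow H^0(X,S^{j+km}E)$ holds. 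You do not use it later, but it should be dropped.
\end{itemize}
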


\begin{proof}
Consider the fibration $g: \PP(E) \to Y$. Then $\sigma$ gives a section of $\mathcal{O}_{\mathbb{P}(E)}(m) \otimes g^*F^{-1}$ or a generically injective morphism $g^*F \to \mathcal{O}_{\mathbb{P}(E)}(m)$. Applying Lemma \ref{fib} to the fibration $g$, we obtain $\lambda(E,X)=\dim Y + \lambda(E_y, X_y)$. 
    
\end{proof}

Now we prove the orbifold Sakai's criterion. 

\begin{proposition}\label{orbibig}
Let $f: X \to C$ be a fibration with non-classical orbifold general type base on a projective surface $X$ with general fiber of genus $\geqslant 2$. Then
\[
\lambda(\Omega_X^1)=2.
\]
\end{proposition}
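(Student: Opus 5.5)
The plan is to apply \cref{fib2} with $Y=C$, $E=\Omega^1_X$, and $F$ a line bundle on $C$ built from a multiple of the orbifold canonical divisor $K_{C,\mathrm{orb}}=K_C+\Delta(f)$. Since $\kappa(C,K_{C,\mathrm{orb}})=1$, $\deg K_{C,\mathrm{orb}}>0$. Choose $N$ divisible by all the non-classical multiplicities $m_i=m(F_{p_i})$ of the Campana multiple fibers. Then $F:=\mathcal{O}_C(NK_C+\sum_i N(1-\frac{1}{m_i})p_i)$ is an honest line bundle of positive degree, hence big on the curve $C$, so $\lambda(F,C)=1$. It then remains to produce a generically injective morphism $f^*F\to S^N\Omega^1_X$ and to compute the fiber term $\lambda(\Omega^1_X|_{X_y},X_y)$.

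\emph{Step 1: the morphism.} The natural map $f^*\Omega^1_C\to\Omega^1_X$ is injective, and taking its $N$-th power gives $f^*K_C^{\otimes N}\to S^N\Omega^1_X$. I claim this extends to $f^*F$, i.e.\ that the pullback of $N$-th powers of $1$-forms on $C$ with poles of order $\le N(1-1/m_i)$ at $p_i$ stays holomorphic on $X$. Let $t$ be a local coordinate at $p_i$. Near a general point of a component $F_{p_i,j}$ of multiplicity $n_j\ge m_i$, write $f^*t=u\,z^{n_j}$ with $u$ a unit. Then $f^*dt=z^{n_j-1}(\ldots)$ vanishes to order $n_j-1$, so $f^*(dt)^N$ vanishes to order $N(n_j-1)\ge N(m_i-1)$. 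On the other hand $f^*t^{-N(1-1/m_i)}$ has a pole of order $n_jN(1-1/m_i)$. We need
\[
N(n_j-1)\geqslant n_jN\Bigl(1-\tfrac{1}{m_i}\Bigr),
\]
which is equivalent to $n_j/m_i\ge 1$ and holds since $m_i=\min_j n_j$. At the finitely many remaining points (singular points of the fiber, intersections of components) the section extends by Hartogs, since $S^N\Omega^1_X$ is locally free on a surface. The map is generically injective because it agrees with $f^*K_C^N\to S^N\Omega^1_X$ away from the fibers over the $p_i$.

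\emph{Step 2: the fiber term.} \Cref{fib2} then gives $\lambda(\Omega^1_X)=1+\lambda(\Omega^1_X|_{X_y},X_y)$ for a general fiber $X_y$. The restriction sits in
\[
0\to\mathcal{O}_{X_y}\to\Omega^1_X|_{X_y}\to K_{X_y}\to 0,
\]
with $K_{X_y}$ ample since the genus is at least $2$. This quotient line bundle gives $\lambda(\Omega^1_X|_{X_y},X_y)=1$: $\mathcal{O}(1)$ on $\PP(\Omega^1_X|_{X_y})$ restricted to the section $\PP(K_{X_y})$ is ample, and its volume grows like $m^2$ because $h^0(S^m)\ge\sum_k h^0(kK_{X_y})$. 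This is exactly the computation Sakai uses in \cref{bignessfib}, and it also follows from the maximality $\lambda\le\dim$. Combining the two steps gives $\lambda(\Omega^1_X)=2$.

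The main obstacle is Step 1, namely verifying the extension across the multiple fibers using the \emph{minimum} of the multiplicities. The non-reduced components and the points where components meet need care; I would handle them by the order count at general points of each component followed by the Hartogs argument. A secondary subtlety is making sure \cref{fib2} is applied with $F$ big on $C$, which requires the integrality choice of $N$.
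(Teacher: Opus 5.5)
Your architecture is the paper's proof, and Step 1 is fine: Lemma~\ref{fib2} applied to the integral divisor $N(K_C+\Delta(f))$, the same local computation with $f^*t=uz^{n_j}$ that reduces holomorphy to $n_j\geqslant m_i$, extension across finitely many points, and generic injectivity from $f^*\Omega^1_C\hookrightarrow\Omega^1_X$.

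\textbf{Step 2 does not work as written.} The paper simply quotes Sakai's Example~4 for $\lambda(\Omega^1_X|_{X_y},X_y)=1$. Neither of the two reasons you give for it is valid.

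\emph{First reason.} Ampleness of $\mathcal{O}(1)$ on the single section $\PP(K_{X_y})$ does not give bigness. For example, $\mathcal{O}_{\PP^1}(-1)^{\oplus 2}$ has the ample quotient $\mathcal{O}_{\PP^1}(1)$, yet it has no nonzero symmetric sections.

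\emph{Second reason.} The filtration of $S^m\Omega^1_X|_{X_y}$ with graded pieces $K_{X_y}^{\otimes k}$, $0\leqslant k\leqslant m$, only gives the upper bound $h^0(S^m\Omega^1_X|_{X_y})\leqslant\sum_k h^0(kK_{X_y})$. The reverse inequality would need every map $H^0(S^k\Omega^1_X|_{X_y})\to H^0(kK_{X_y})$ to be surjective. That fails in general, because $0\to\mathcal{O}_{X_y}\to\Omega^1_X|_{X_y}\to K_{X_y}\to 0$ does not split. Its class in $H^1(X_y,T_{X_y})$ is the Kodaira--Spencer class of $f$ at $y$, which is nonzero when $f$ is not isotrivial. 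For genus $2$, infinitesimal Torelli then gives $h^0(\Omega^1_X|_{X_y})<1+g$ already at $m=1$.

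\textbf{Two correct replacements.} Either one completes your proof.
\begin{enumerate}
\item $\Omega^1_X|_{X_y}$ is an extension of the nef line bundle $K_{X_y}$ by $\mathcal{O}_{X_y}$, hence nef. Moreover $\mathcal{O}(1)^2=\deg\Omega^1_X|_{X_y}=2g-2>0$, so $\mathcal{O}(1)$ is nef and big.
\item By Serre duality, $h^1(S^m\Omega^1_X|_{X_y})=h^0(S^mT_X|_{X_y}\otimes K_{X_y})\leqslant g+1$, since the graded pieces are $K_{X_y}^{\otimes(1-k)}$. Riemann--Roch then gives $h^0(S^m\Omega^1_X|_{X_y})\geqslant (g-1)(m^2-1)-(g+1)$, which grows like $m^2$.
\end{enumerate}
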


\begin{proof}
Let $f: X \to C$ be a fibration over a base of non-classical orbifold general type on a projective surface $X$ with general fiber of genus $\geqslant 2$. From the definition of the pair $(C, \Delta(f))$, we see that we have a morphism from $f^*(\ell (K_C+\Delta(f)))$ to $S^\ell \Omega_X^1$, for any $\ell\in\NN$ such that $\ell(K_C+\Delta(f))$ is integral. 
Indeed, let \(p\in C\) be a point for which \(m_p>1\), and let \(t\) be a local coordinate on \(C\) centered at \(p\). A local generator of \(\mathcal O_C\bigl(\ell(K_C+\Delta(f))\bigr)\) near \(p\) is \[ \frac{(dt)^\ell}{t^{\ell(1-1/m_p)}}. \] 
Let \(x\) be a general point of an irreducible component \(F_{p,j}\) of the fiber over $p$ whose multiplicity is $n_{p,j}$. Since \(X\) is smooth, we may choose a local coordinate \(z\) on \(X\) centered at \(x\), transverse to \(F_{p,j}\), such that \(F_{p,j}=(z=0)\) and \[ f^*t=u z^{n_{p,j}}, \] where \(u\) is a holomorphic unit. Consequently, \[ f^*(dt) = z^{n_{p,j}-1} \bigl(n_{p,j}u\,dz+z\,du\bigr). \] It follows that \[ f^*\left( \frac{(dt)^\ell}{t^{\ell(1-1/m_p)}} \right) = z^{\ell(n_{p,j}/m_p-1)} \frac{\bigl(n_{p,j}u\,dz+z\,du\bigr)^\ell} {u^{\ell(1-1/m_p)}}. \] Since \(n_{p,j}\geq m_p\), we have $\ell\left(\frac{n_{p,j}}{m_p}-1\right)\geq 0$. Therefore, the above pullback is holomorphic at a general point of every irreducible component of every fiber of \(f\). Thus the resulting rational morphism $f^*\mathcal O_C\bigl(\ell(K_C+\Delta(f))\bigr) \dashrightarrow S^\ell\Omega_X^1$ is regular away from a subset of codimension at least two. Since \(X\) is smooth and both sheaves are locally free, it extends uniquely to a morphism on all of \(X\): \[ f^*\mathcal O_C\bigl(\ell(K_C+\Delta(f))\bigr) \longrightarrow S^\ell\Omega_X^1. \] This morphism is generically injective, since over the smooth locus of \(f\) it is induced by the injective morphism $f^*\Omega_C^1\longrightarrow\Omega_X^1.$

\ 

Since the pair $(C, \Delta(f))$ is of general type, we have $\lambda(\ell(K_C+\Delta(f)),C)=1$. From \cite{sakai} (Example $4$), we know that on a general fiber $X_c$, $\lambda(\Omega^1_{X|X_c}, X_c)=1$. We conclude by applying Lemma \ref{fib2}.
\end{proof}

\begin{remark}
The key property used in the proof is that $f: X \to (C, \Delta(f))$ is a $C$-morphism in the sense of \cite{KR1} i.e. a morphism which pulls-back ``orbifold'' differential forms. An alternative proof (close to the one given for Proposition \ref{orbsakai}) of Proposition \ref{orbibig} could be given using the properties of $C$-morphisms \cite[Proposition~14.9]{KR1}. 
\end{remark}

\
    
\

\

\subsection{Genus 2 fibrations and Horikawa surfaces with maximal cotangent dimension}

\ 

\ 

This section addresses the existence of minimal surfaces of general type in the region $K^2<2\chi$ having a Campana fibration of general type. We direct our results towards the most challenging case, Horikawa surfaces (lowest ratio $K^2/\chi$  for given $\chi$, $\chi\geqslant 4$). We approach this problem by considering surfaces with a genus 2 fibration. Stoppino in \cite{Stoppino} showed that there are Campana fibrations of general type of genus $2$, the examples produced were in the region $K^2\geqslant 2\chi$. We show that these fibrations exist in the region $K^2<2\chi$
and, in fact, can occur on special Horikawa surfaces.

\medskip

 To build genus $2$ fibrations with Campana multiple fibers we need first a
description of the possible Campana multiple genus $2$ fibers. Note that adjunction trivially implies that
Campana multiple fibers of genus 2 cannot be classical multiple fibers. 

\

There are three classifications of the possible genus 2 fibers. The initial one, by
Ogg \cite{ogg}, separates the genus 2 fibers in 44  numerical types (describes the irreducible components of the fiber, how they intersect and their multiplicities). Some of these numerical types contain more than a single possibility. In this description, there are four types, type (17), (18), (29) and (30), consisting of Campana multiple fibers of genus 2 and all have Campana multiplicity 2 (this was already observed by Stoppino \cite{Stoppino}). 

\ 

The second classification, by Namikawa--Ueno \cite{nu}, separates the genus 2 fibers in around 120 types which belong to five groups. This classification takes into account the geometry of the pencil near the fiber, in particular, it  describes how a genus $2$
fiber can appear in a local genus 2 fibration $f:{\cal X}\to \Delta$  that is the relatively minimal resolution of a double cover $g:{\cal X'}\to\Delta \times \mathbb{P}^1$ 
\[
\begin{tikzcd}
{\cal X}\arrow[r, "r"] \arrow[d, "f"'] & {\cal X'} \arrow[d, "g"] \\
\Delta & \Delta \times \mathbb{P}^1 \arrow[l]
\end{tikzcd}
\]

\noindent From this description, we obtain for each Campana multiple fiber an equation of the
branch locus $B\subset \Delta \times \mathbb{P}^1$ for a double cover $g:{\cal X'}\to\Delta \times \mathbb{P}^1$ that produces the Campana multiple fiber as the central fiber $f^{-1}(0)$.

\

Finally, there is the classification by Horikawa \cite{genus2} which separates the genus 2 fibers in six types, this description is geared towards understanding  the global properties of a projective surface $X$ with a genus 2 fibration having specific genus 2 fibers. More precisely, this description specifies the impact of a given genus 2 fiber on the geographic pair $(\chi,K^2)$ of the surface $X$, which is of interest to us. 

\

We expand on the Horikawa approach. In \cite{genus2}, it is shown that if $X$ is a minimal smooth regular surface with a genus $2$ fibration, $f\colon X \to \PP^1$, then one can have a diagram as follows:

\[\begin{tikzcd} 
& \widehat{X} \arrow[ld, "\phi"'] \arrow[rd, "r_c"] & \\
X \arrow[d, "f"'] & & X' \arrow[d, "g"] \\
\PP^1  & &\FF_d \arrow[ll, crossing over, "p"]
\end{tikzcd}
\]
\vspace {-1in}
\begin{align} \label{dia}
\end{align}

\vspace {.5in}

\noindent where: 

\begin{itemize}
    \item [i)] $p$ is the natural projection.
    \item [ii)] $g$ is a double cover of an Hirzebruch surface $\FF_d$,
branched along a reduced even sextic curve $B$, i.e. $B\in |6\Delta_0+kF|$ with $k$ even, with the collection of singularities of $B$ possible in a single fiber are restricted to only six types (see below).
    \item [iii)] $r_c$ is the canonical resolution of $X'$.
    \item[iv)] $\phi$ is a birational morphism contracting the $(-1)$-curves introduced in the fibers by the canonical resolution procedure (the number of (-1)-curves introduced determines the pair $(\chi,K^2)$ for $X$).
\end{itemize}

\

In \cite{genus2}, the classification of the singular genus 2 fibers comes from a classification of the singularities of the branch locus $B$ appearing on a single fiber of $\FF_d$. A fiber $\Gamma$ of $p$ is called singular if it contains singularities of $B$. Note that each singular fiber of the genus 2 fibration $f:X\to \PP^1$ corresponds to a singular fiber $\Gamma$ of $p$. Using the elementary transformations of Nagata on ruled surfaces, Horikawa showed that we can reduce the possible singular fibers $\Gamma$ of $p$ (possible configuration of singularities of $B$ on $\Gamma$) to six equivalence classes, hence six types. 

\ 

 The six types of Horikawa for singular genus 2 fibers are denoted by: $(0)$, $(I_k)$, $(II_k)$, $(III_k)$, $(IV_k)$ and $(V)$, with $k\geqslant 1$. We are only interested in the types $(0)$ and $(V)$, since the Campana multiple fibers of genus 2 belong to these types.  The type $(0)$ corresponds to fibers on which $B$ only has simple curve singularities, Ogg's numerical type (29) is of type (0). The type $(V)$ corresponds to  fibers $\Gamma$ that are contained in $B$ and such that $B_1=B-\Gamma$ has a quadruple point $x$ on $\Gamma$, which after  blowing up  $x$ the proper transform of $B_1$ has a double point on the proper transform of $\Gamma$. The numerical types (17), (18) and (30) are of type (V).

\  

Theorem 3 of \cite{genus2} states that for $X$ as above with the diagram \cref{dia}, the following relation between $K^2$ and $\chi$ holds:

\begin{align}\label{relation}
    K^2=2\chi-6+\nu(V)+\sum_k[(2k-1)(\nu(I_k)+\nu(III_k))+2k(\nu(II_k)+\nu(IV_k))]
\end{align}
\noindent where $\nu(*)$ is the number of singular fibers of $f$ of type $(*)$.

\

A direct corollary of this theorem is that  $X$ is a Horikawa surface ($K^2=2\chi-6$) if and only if all  the singular fibers of $f$ are of type $(0)$.

\

We give a  description of the Campana multiple fibers of genus 2, with a focus on the Campana multiple fiber of Horikawa type (0), denoted from now on simply by type (0). The Campana multiple fibers of type (0) are of special importance as they are the only ones possible on Horikawa surfaces by \cref{relation}.

\medskip
\medskip
\noindent
\textbf{Campana multiple fiber of type (0).} 

\ 

We consider the genus 2 fibers of numerical type (29) (\cite{ogg}), its curve configuration    is as follows

\[
\begin{tikzpicture}[scale=.6]

\draw (-6,3) -- (6,3);
\node at (-6.2,3) {$2$};

\node at (0,3.4) {$(-3)$};

\draw (-4,3.5) -- (-4,1);
\draw (4,3.5) -- (4,1);

\node at (-4,3.8) {$3$};
\node at (4,3.8) {$3$};

\draw (-5.5,1.7) -- (-2.5,1.7);
\draw (2.5,1.7) -- (5.5,1.7);

\node at (-5.7,1.7) {$4$};
\node at (2.3,1.7) {$4$};

\draw (-3.5,2) -- (-3.5,-0.5);
\draw (4.5,2) -- (4.5,-0.5);

\node at (-3.5,2.3) {$5$};
\node at (4.5,2.3) {$5$};

\draw (-5.5,0) -- (-1.5,0);
\draw (2.5,0) -- (6.0,0);

\node at (-5.7,0) {$6$};
\node at (2.3,0) {$6$};

\draw (-4.7,0.3) -- (-4.7,-2.2);
\node at (-4.7,0.6) {$4$};

\draw (-5,-1.5) -- (-3.5,-1.5);
\node at (-5.2,-1.5) {$2$};

\draw (-2.5,0.3) -- (-2.5,-2.2);
\node at (-2.5,0.6) {$3$};


\draw (3.3,0.3) -- (3.3,-2.2);
\node at (3.3,0.6) {$4$};

\draw (3,-1.5) -- (4.5,-1.5);
\node at (2.8,-1.5) {$2$};

\draw (5.5,0.3) -- (5.5,-2.2);
\node at (5.5,0.6) {$3$};
\end{tikzpicture}
\]

\noindent where all curves are smooth rational curves, all curves are (-2)-curves except for the one marked as a (-3)-curve, and the numbers attached are the corresponding multiplicities.

\ 

A possible branch locus $B$ of a double cover of $\Delta \times \mathbb{P}^1$, with coordinates $(t,[x:z])$,
providing a local genus $2$ fibration with a singular fiber of type (0) over
$t=0$ \cite{nu}, is

\[
B=\left\{\, t\,(x^3-t^2z^3)\bigl((x-z)^3-t^2z^3\bigr)=0 \,\right\}.
\]

The branch locus has three irreducible components $B=\Gamma+B_1+B_2$, where $\Gamma$ is the central fiber and $B_i$, $i=1,2$,  have a cusp on $\Gamma$  with  the order of contact of the $B_i$ with the central fiber $\Gamma.B_i=3$.

\[
    \begin{tikzpicture}[scale=.6][font=\footnotesize]

\draw (0,-3) -- (0,3);
\node [font=\small] at (0,3.3) { $\Gamma$};
\draw[dashed] (-.6,-3) -- (-.6,3);
\draw[dashed] (-1,-3) -- (-1,3);

\draw[dashed] (.6,-3) -- (.6,3);
\draw[dashed] (1,-3) -- (1,3);


\draw (0,.5) .. controls (-0.4,2) and (-0.8,2.2) .. (-1.6,2.4);
\draw (0,.5) .. controls (-0.4,2.1) and (-0.8,2.3) .. (-1.6,2.5);
\draw (0,.5) .. controls (-0.4,1.9) and (-0.8,2.1) .. (-1.6,2.3);
\draw (0,.5) .. controls (0.4,2) and (0.8,2.2) .. (1.6,2.4);
\draw (0,.5) .. controls (0.4,2.1) and (0.8,2.3) .. (1.6,2.5);
\draw (0,.5) .. controls (0.4,1.9) and (0.8,2.1) .. (1.6,2.3);
\node at (-1.9,2.4) {$B_1$};

\draw (0,-1.5) .. controls (-0.4,-0.1) and (-0.8,0.1) .. (-1.6,0.3);
\draw (0,-1.5) .. controls (-0.4,0) and (-0.8,0.2) .. (-1.6,0.4);
\draw (0,-1.5) .. controls (-0.4,0.1) and (-0.8,0.3) .. (-1.6,0.5);
\draw (0,-1.5) .. controls (0.4,-0.1) and (0.8,0.1) .. (1.6,0.3);
\draw (0,-1.5) .. controls (0.4,0) and (0.8,0.2) .. (1.6,0.4);
\draw (0,-1.5) .. controls (0.4,0.1) and (0.8,0.3) .. (1.6,0.5);
\node at (-1.9,0.4) {$B_2$};
\end{tikzpicture}
\]
\hspace {3in} {\bf Figure 1.}

\

The branch locus $B$ has two singularities, both are the (simple curve singularity) $e_7$, hence it is of Horikawa type $(0)$. 

\medskip
\medskip
\noindent
\textbf{Campana multiple fibers of type (V).} 

\ 

\ 

We describe the remaining three possible numerical types of Campana multiple genus 2 fibers, types (17), (18) and (30), via the equation of a branch locus $B\subset \Delta \times \mathbb{P}^1$ for which the associated double cover has the Campana multiple fiber as the fiber over $0\in \Delta$, these equations appear in \cite{nu}. The curve configuration of these fibers are described in both \cite{ogg} and \cite{nu}.

\

\

a) The  numerical type (17) corresponds to the genus 2 fiber that comes, as described previously, from the following branch locus:

\[
B=\left\{\, t\,(x^6+tx^3z^3+t^2z^6)=0 \,\right\}\subset \Delta\times \PP^1
\]

The branch locus has three irreducible components $B=\Gamma+B_1+B_2$, where $\Gamma$ is the central fiber, $B_1=\left\{\, x^3+\frac{1+\sqrt{3}i}{2}tz^3=0 \,\right\}$ and $B_2=\left\{\, x^3+\frac{1-\sqrt{3}i}{2}tz^3=0 \,\right\}$. The irreducible components of $B$ are smooth meeting at $x=(0,[0,1])$, making $x$ the only singularity of $B$  a triple point of $B$. The triple point $x$ is not simple since after one blow up at $x$, the strict transforms of the irreducible components still meet at a triple point. 

\ 

 b)  The numerical type (18) contains infinitely many distinct genus 2 curves, one for each pair $(k,\ell)$ with $k\geqslant 0$ and $\ell=0,1,2$ (we remark, that we kept the separation of type (17) from type (18) as in \cite{ogg} and \cite{nu}, but type (17) is the case $k,l=0$ of type (18)). The equation of the branch locus for a given pair $(k,\ell)$ is

\[
B=\left\{\, t\,(x^3-tz^3)2+t^{k+2}x^\ell z^{6-\ell})=0 \,\right\}\subset \Delta\times \PP^1
\]

The number of irreducible components of $B$ is either two or three, but the relevant feature is that it follows from the equation that  $\Gamma$  is contained in $B$ and $Supp(B-\Gamma)\cap\Gamma=\{x\}$ with $x$ a non-simple triple point of $B$. 

\  

c) The  numerical type (30) corresponds to the genus 2 fiber that comes from the following branch locus:

\[
B=\left\{\, t\,(x^2+tz^2)(x^4+tz^4)=0 \,\right\}\subset \Delta\times \PP^1
\]

It is clear from the equation for $B$ that $B$ has three smooth irreducible components, $B=\Gamma+B_1+B_2$, meeting at $x=(0,[0,1])$, making $x$ the only singularity of $B$  a triple point of $B$. The triple point $x$ is not simple since after one blow up at $x$, the strict transforms of the irreducible components still meet at a triple point.

\

We just saw that all the Campana multiple fibers of types (17), (18) and (30) come from a fiber $\Gamma$ over $t=0$ that is contained in $B$ and $Supp(B-\Gamma)\cap\Gamma=\{x\}$ with $x$ a non-simple triple point of $B$. In the proof of Lemma 6 of \cite{genus2}, Horikawa   shows these fibers are   of type (V) using a single elementary transformation.

\

\

\begin{theorem} \label{max} There are Horikawa surfaces with maximal cotangent dimension. In particular, we show that this is possible for $p_g=5k-2$ for any $k\geqslant 5$.
    
\end{theorem}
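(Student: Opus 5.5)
The strategy is to apply Proposition \ref{orbibig} to a genus $2$ fibration $f\colon X\to\PP^1$ on a Horikawa surface. Since $g(\PP^1)=0$ and every Campana multiple fiber of genus $2$ has non-classical multiplicity $2$, the orbifold base is $(\PP^1,\Delta(f))$ with $\Delta(f)=\frac12\sum_{i=1}^{n}p_i$. Here $p_1,\dots,p_n$ are the points over which $f$ has Campana multiple fibers. We need $\deg(K_{\PP^1}+\Delta(f))=-2+\frac n2>0$, i.e. $n\geqslant 5$. By \cref{relation} the fibration lives on a Horikawa surface exactly when all singular fibers are of Horikawa type $(0)$. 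So the Campana fibers must all be of numerical type $(29)$, modelled locally by the branch locus $t(x^3-t^2z^3)((x-z)^3-t^2z^3)=0$.

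The plan is to construct the branch curve globally on a Hirzebruch surface. Work on $\FF_d$ with fibration $p\colon\FF_d\to\PP^1$. Choose $n=5$ points $t_1,\dots,t_5\in\PP^1$ with fibers $\Gamma_1,\dots,\Gamma_5$. Look for an even reduced curve $B=\Gamma_1+\cdots+\Gamma_5+B'$ in $|6\Delta_0+kF|$, where $B'\in|6\Delta_0+(k-5)F|$ (with $k$ even, so $5$ fibers forces a parity adjustment). We need $B'$ to decompose near each $\Gamma_i$ locally as two cuspidal branches, each with contact order $3$ with $\Gamma_i$, at two distinct points of $\Gamma_i$. The natural choice is $B'=B_1+B_2$ with each $B_j\in|3\Delta_0+aF|$ a trisection. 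We require $B_j$ to be totally ramified over $p$ at a point of $\Gamma_i$ with a cusp there, as in the local form $x^3=t^2$. For instance, $B_j$ could be the pull-back of a fixed trisection under a base change. Alternatively, take $B_j$ given by $x^3-\varphi_j(t)^2z^3$ in suitable trivializations, with $\varphi_j$ vanishing simply at the $t_i$.

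The explicit recipe is as follows. On $\FF_d$, take $B_1=\{X^3=s^2 Z^3\}$ and $B_2=\{(X-cZ)^3=s^2Z^3\}$ with $s=\prod_i(t-t_i)$, written in homogeneous fiber coordinates so that the classes are correct. With $\deg s=5$ this gives $B_j\in|3\Delta_0+10F|$ roughly on $\FF_0$, or an analogous class on $\FF_d$. Adjust with additional fibers or a general choice to keep $B$ even, reduced and with only simple singularities elsewhere. Then check three things. First, $B$ has exactly two $e_7$ points on each $\Gamma_i$. Second, $B_1\cap B_2$ is transverse elsewhere, or avoided, for example by generically perturbing away from the $t_i$ while keeping the local jets, so that there are only $a$-type singularities. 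Third, Horikawa's canonical resolution gives fibers of type $(29)$ over the $t_i$ and only type-$(0)$ fibers otherwise. The invariant count then comes from $B\in|6\Delta+(p_g+3d+2)F|$. Matching the degree in $F$ with $5$ fibers plus $2\cdot$(degree of $B_j$) should yield $p_g\equiv 3\pmod 5$, i.e. $p_g=5k-2$. Using $n=5k'$ or larger $s$ gives the range $k\geqslant5$, where the general fiber of $B_j$ can still be chosen generically.

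The conclusion then follows from Proposition \ref{orbibig}: $X$ is minimal with $K^2=2\chi-6$ by \cref{relation}, and $\Delta(f)$ has degree $\geqslant\frac52>2$, so $\Omega^1_X$ is big. The main obstacle is the global existence step. We need a branch curve in the correct linear system that simultaneously realizes the $e_7$ configuration on at least five fibers, has only simple singularities elsewhere, and has the right parity and class to produce $p_g=5k-2$. I would first try the explicit base-change construction above, then compute the class carefully and use a dimension count on jets to show that a generic member with the imposed local conditions is otherwise simple.
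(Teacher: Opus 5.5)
Your reduction agrees with the paper's. By \cref{orbibig} and \cref{relation}, it suffices to exhibit an even curve $B$ on some $\FF_d$, with only simple singularities, that has the Figure~1 configuration on at least five fibers: a fiber $\Gamma\subset B$ plus two cusps of $B$ with cuspidal tangent $\Gamma$, i.e.\ two $e_7$ points. But the existence of such a $B$ is the whole content of the theorem, and you leave it open; the recipe you sketch does not work as written.

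Read literally, $X^3=s^2Z^3$ and $(X-cZ)^3=s^2Z^3$ with $\deg s=5$ are not curves on any $\FF_d$, because bihomogeneity forces $3d=2\deg s$ and $\deg c=d$. On $\FF_0$ you must put a coefficient of degree $10$ in front of $X^3$. The literal choice $t_0^{10}$ makes $B_1$ and $B_2$ both pass through the point $\{t=\infty,\ Z=0\}$ with local equation $w^3=u^{10}$. This is not a simple singularity, so by \cref{relation} the surface is not Horikawa. The recipe can be repaired, for instance with a coefficient having simple zeros, or with $\deg s\in 3\ZZ$ on $\FF_{2\deg s/3}$. But then new contacts between $B_1$ and $B_2$ appear and have to be analysed.

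Parity is a second obstruction. With five fibers and $B_1\equiv B_2\equiv 3\Delta_0+aF$ you get $B\sim 6\Delta_0+(2a+5)F$, which is never $2$-divisible, so an extra branch fiber is forced. This is exactly the role of $\Gamma_1$, the transform of a general line, in the paper.

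Finally, the ``simple elsewhere'' check is deferred to an unspecified jet count, and $p_g=5k-2$ is asserted rather than computed. Repaired versions of your recipe give other values: $\deg s=6$ on $\FF_4$, with all six cusp fibers in $B$, gives $B\in|6\Delta_0+30F|$ and $p_g=16$. So the second sentence of the theorem is not reached.

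The missing idea is to build a \emph{single} type~$(0)$ fiber by hand and then replicate it by a base change that is \'etale over it. This is the opposite of a base change branched at the cusp fibers, which is what your ``pull-back of a fixed trisection'' suggests.

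The paper takes $C_1=\{z_0^3+z_1^2z_2=0\}$ and $C_2=\{(z_0-z_2)^3+z_1^2z_2=0\}$. Both cusps have tangent $\ell_0=\{z_1=0\}$ with contact $3$. It then sets $B_0=C_1+C_2+\ell_0+\ell_1$ with $\ell_1$ a general line, and computes its singularities explicitly: two $e_7$ on $\ell_0$, eleven $a_1$ and one $a_9$. Blowing up $q=\ell_0\cap\ell_1$ gives $B\in|6\Delta_0+8F|$ on $\FF_1$ with one type~$(0)$ fiber over $\Gamma_0$.

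The base change $\FF_k\to\FF_1$ is induced by $[x_0:x_1]\mapsto[x_0^k:x_1^k]$, branched over two fibers meeting $B$ transversally. It is \'etale near $\mathrm{Sing}(B)$, so $B_k\in|6\Delta_0'+8kF|$ remains simple and acquires $k$ type~$(0)$ fibers. The double cover formulas give $c_1^2=10k-8$ and $\chi=5k-1$, i.e.\ $p_g=5k-2$, and $k\geqslant 5$ makes the orbifold base of general type.
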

\begin{proof} We will show that there are Horikawa surfaces $X$ that are genus-2 fibrations with at least five Campana fibers of type (0) and hence,  by \cref{orbibig}, with $\lambda_\Omega(X)=2$.

\

 The result holds if we can construct in some $\FF_d$ an even sextic curve $B$ with: 1) only simple singularities, hence by the paragraph of  \cref{relation}, the minimal resolution $X$ of the double cover associated with $B$ is a Horikawa surface and 2)   having the configuration given in figure 1, associated to the Campana multiple fiber of type (0), in at least five fibers of $\FF_d$ guaranteeing  $\Omega^1_X$ big by \cref{orbibig}.

\

 The configuration of the branch locus $B$ near the fiber $\Gamma$ of $\FF_d$ associated with the Campana multiple fiber of type (0) consists of three irreducible components, the fiber $\Gamma$ and two curves  having a cusp at different points of $\Gamma$ such that the multiplicity of intersection of the fiber and each of the two curves is three. To produce such $B$ start with two cuspidal cubic curves, $C_1$ and $C_2$, in $\PP^2$  with cusps at $x_i \in C_i$ and such that the line $\ell_0$ passing through the $x_1$ and $x_2$ satisfies for $i=1,2$
\[
m_{x_i}(C_i,\ell_0)=3.
\]

The example we consider for the construction has
\[
C_1=\{z_0^3+z_1^2 z_2=0\}
\quad\text{and}\quad
C_2=\{(z_0-z_2)^3+z_1^2 z_2=0\}.
\]

\noindent with cusps $x_1=[0:0:1]$ and $x_2=[1:0:1]$. The intersection \(C_1\cap C_2\) consists of five points at which both curves are smooth.  At four of these points, $p_i$, $i=1,...,4$, $m_{p_i}(C_1,C_2)=1$, while at the remaining point $p_5=[0:1:0]$  we have $m_{p_5}(C_1,C_2)=5$. This implies that the singularities of $C=C_1+C_2$ are the two cusps at $\{x_i\}_{i=1,2}$, four $a_1$ singularities at $\{p_i\}_{i=1,...,4}$ and one $a_9$ singularity at $p_5$.

\medskip

Let
\[
B_0=C+\ell_0+\ell_1
\]
with $\ell_0=\{z_1=0\}$ and $\ell_1$ a generic line. The singularities of $B_0$ are
\[
\mathrm{Sing}(B_0)=\{x_1,x_2,p_1,\dots,p_5,q,q_1,\dots,q_6\},
\]
where
\[
\{q\}=\ell_0\cap \ell_1
\quad\text{and}\quad
\{q_1,\dots,q_6\}=\ell_1\cap C.
\]

The curve $B_0\subset \PP^2$ is simple, as its singularities are of type ADE. $B_0$ has two $e_7$ singularities,
i.e. with local equation $x^3y+y^3=0$, at $x_1$ and $x_2$, with the line
$\ell_0$ as the smooth branch of both singularities. Less relevant to us,
$B_0$ has eleven $a_1$ singularities at $p_1,\dots,p_4,q,q_1,\dots,q_6$
and one $a_9$ singularity at $p_5$.

\medskip

Consider the blow up of $\mathbb{P}^2$ at $q$,

\[
\begin{tikzcd}
\FF_1  \arrow[d, "p"']\simeq \mathrm{Bl}_q \mathbb{P}^2 \arrow[r, "\sigma"] & \mathbb{P}^2 \\
\mathbb{P}^1 &
\end{tikzcd}
\]

\noindent and let $B$ be the strict transform of $B_0$,
\[
B=\sigma^*B_0-2\Delta_0=\Gamma_0+\Gamma_1+C_1'+C_2' \in |6\Delta_0+8F|,
\]
where $\Gamma_i=p^{-1}(t_i)$ are fibers of $p$ corresponding to
the strict transforms of $\ell_i$ and $C_i'$ are the transforms of the $C_i$.

\medskip

The curve $B \subset \FF_1$ is also a simple curve with the same singularities as $B_0$ except for the absence of one of the $a_1$, that is resolved when we blow up at $q$. The two $e_7$ singularities lie in the fiber $\Gamma_0$
and by construction $B$ has at $\Gamma_0$ the configuration appearing in figure 1 and for which the double cover associated to $B$ gives a Campana genus 2 fiber of type (0) over $\Gamma_0$.

\ 

Let $X_1'$ be the double cover of $\FF_1$ branched along $B$
(there is no ambiguity about the line bundle $L$, $L^{\otimes 2}\simeq \mathcal{O}(B)$)
and $X_1$ be the minimal resolution of $X_1'$, with diagram
\[
\begin{tikzcd}
X_1 \arrow[r, "r"] \arrow[dr, "f"'] & X_1' \arrow[r, "\,g\,"] & \FF_1 \arrow[dl, "p"] \\
 & \mathbb{P}^1 &
\end{tikzcd}
\]

\noindent the map $g$ is the double cover and $r$ the minimal resolution. By construction, $f:X_1 \to \PP^1$ has a single Campana multiple fiber of type $(0)$, the fiber $f^{-1}(t_0)$.

\medskip

We parametrize $\mathbb{P}^1$ such that the fibers
$f^{-1}([1\!:\!0])$ and $f^{-1}([0\!:\!1])$ are smooth and let
$\pi_k\colon \mathbb{P}^1 \to \mathbb{P}^1$ be the $k$-cyclic cover of $\mathbb{P}^1$ branched over
$[1\!:\!0]$ and $[0\!:\!1]$ (\,$\pi_k([x_0\!:\!x_1])=[x_0^k:x_1^k]$\,). Consider the base change of $p\colon \FF_1\to\mathbb{P}^1$
associated to $\pi_k$, as in \cite{persson1982horikawa}:
\[
\begin{tikzcd}
\FF_k \arrow[r, "\phi_k"] \arrow[d, "p_k"'] & \FF_1 \arrow[d, "p"] \\
\mathbb{P}^1 \arrow[r, "\pi_k"] & \mathbb{P}^1
\end{tikzcd}
\]

\noindent   where $p_k$ is the natural projection. The curve
\[
B_k=\phi_k^*B
\]
is simple and belongs to
\[
|6\Delta'_0+8kF|
\]
\noindent with $\Delta'_0$ the unique section in $\FF_k$ with negative self-intersection. Since the singularities of $B$ lie in the \'etale locus of $\phi_k$,
there are $k$ isomorphic singularities over each singularity of $B$. More relevant, we get the $k$ fibers of $p_k$ over the points in $\pi_k^{-1}(t_0)$ where $B_k$ gives the local
configuration of the branch locus associated to the Campana multiple fiber  of type~(0).
\

If we do the double cover of $\FF_k$ branched along $B_k$, we have the diagram:

\begin{center}
\begin{tikzpicture}[
    node distance=1cm and 2cm, 
    auto, 
    >=Stealth, 
    black, 
    every node/.style={inner sep=2pt}
]
    \node (X1) {$X_k$};
    \node (Xk) [right=of X1] {$X_1$};
    
    \node (X1p) [below=of X1] {$X_k'$};
    \node (Xkp) [below=of Xk] {$X_1'$};
    
    \node (F1) [below=of X1p] {$\mathbb{F}_k$};
    \node (Fk) [below=of Xkp] {$\mathbb{F}_1$};
    
    \node (P1l) [below=of F1] {$\mathbb{P}^1$};
    \node (P1r) [below=of Fk] {$\mathbb{P}^1$};
    
    \draw[<-] (Xk) -- node[above] {$g_k$} (X1);
    \draw[<-] (Xkp) -- (X1p);
    \draw[<-] (Fk) -- node[above] {$\phi_k$} (F1);
    \draw[<-] (P1r) -- node[above] {$\pi_k$} (P1l);
    
    \draw[->] (X1) -- node[left] {$r_k$} (X1p);
    \draw[->] (X1p) -- node[left] {$g_k$} (F1);
    \draw[->] (F1) -- node[left] {$p_k$} (P1l);
    
    \draw[->] (Xk) -- node[right] {$r$} (Xkp);
    \draw[->] (Xkp) -- node[right] {$g$} (Fk);
    \draw[->] (Fk) -- node[right] {$p$} (P1r);
    
    \draw[->] (X1.west) to [bend right=35] node[left] {$f_k$} (P1l.west);
    \draw[->] (Xk.east) to [bend left=35] node[right] {$f$} (P1r.east);

\end{tikzpicture}

\end{center}

\noindent where the map $g_k$ is the double cover of $\FF_k$ with branch locus $B_k$, $r_k$ the minimal resolution of $X_k'$. 
\medskip

The   fibrations $f_k:X_k \to \PP^1$ are genus 2 fibrations with $k$ Campana multiple singular fibers of type~(0). If $k\geqslant 5$, the base of $f_k$  is of non-classical orbifold general type and hence, by \cref{orbibig}, the surfaces $X_k$ have maximal cotangent dimension. 

\ 

As observed before, \cref{relation} gives that the $X_k$ are Horikawa surfaces since all singular fibers are of type (0) (or equivalently the $X_k$ are minimal resolutions of double covers of  Hirzebruch surfaces branched along a sextic curve with only simple singularities). Below we will  calculate the invariants of $X_k$.

\ 

First, we observe that by the simultaneous resolution results of Brieskorn applied to families of double covers with branch locus acquiring simple singularities (see \cite{persson1982horikawa} section 1 or  \cite{Horikawa1976I} theorem 3.1), it follows that there are small deformations $X'_k$ of $X_k$ that are double covers of $\FF_k$ with smooth branch locus in the same linear system. Hence, the invariants $c_1^2$ and $\chi$ of $X_k$ are the same as the ones for $X'_k$, 
the double cover of $\FF_k$, $\pi \colon X \to \FF_k$, branched along a smooth 
\[
B \in |6\Delta_0 + 8kF|
\]

\medskip

The formulas for the invariants of double covers, see for example \cite{BPV} V.22, give:
\[
c_1^2(X_k) = 2\left(K_{\FF_k} + \frac{B}{2}\right)^2,
\]
\[
\chi(\mathcal{O}_{X_k})
= 2\chi(\mathcal{O}_{\FF_k})
+ \frac{1}{2}\left(K_{\FF_k}\cdot \frac{B}{2} + \left(\frac{B}{2}\right)^2\right).
\]

Now use
\[
K_{\FF_k}^2 = 8,\hspace{.2in} \chi(\mathcal{O}_{F_k})=1,\hspace{.2in}
K_{\FF_k}\cdot B = -10k - 12\hspace{.1in} \text{and} \hspace{.1in} B^2 = 60k,
\]
and obtain:
\[
c_1^2(X_k) = 10k - 8,
\]
\[
\chi(\mathcal{O}_{X_k}) = 5k - 1.
\]

\noindent as expected respecting the relation $c_1^2(X_k) = 2\chi(\mathcal{O}_{X_k}) - 6$. In particular, it follows that the Horikawa surfaces obtained by this construction have $p_g=5k-2$ for any $k\geqslant 5$.
\end{proof}

\begin{remark} The surface $X_k$ is of general type and minimal since it is the minimal resolution  of $X_k'$ which has only canonical singularities, hence the minimal resolution $r_k:X_k \to X'_k$ is crepant, $r_k^*K_{X'_k}=K_{X_k}$,  and
\[
K_{X'_k} = \pi^*\!\left(K_{\FF_k} + \frac{B}{2}\right)
= \pi^*\!\bigl(\Delta_0 + (3k-2)F\bigr),
\]
is ample
for $k \geqslant 2$, see \cref{irred}, and for $k=1$ $K_{X'_1}$ is nef with  $K^2_{X_1}=2$.
    
\end{remark}

\ 

\begin{remark} As in the CMS-criterion,  the maximal cotangent dimension of the Horikawa surfaces $X_k$ just constructed is due to the singularities present  in their canonical models, at least they need 10 $E_7$ singularities to guarantee the five fibers of type (0). In contrast to the CMS-criterion, the number of $E_7$ singularities is not enough for the orbifold generalization of Sakai's criterion, \cref{orbibig}, to hold, the global geometric arrangement of the $E_7$ singularities plays an essential part.
    
\end{remark}

\ 

\begin{remark} \cref{fails} states that a minimal surface $X$ with $K^2/\chi\leqslant 2$ whose canonical model only has singularities of type $A$ cannot satisfy the CMS-criterion (there is an expectation that this statement is true in general). The examples $X_k$ just constructed have singularities of type $E$ in their canonical models. We observe that for example the Horikawa surface $X_5$ does not satisfy the CMS-criterion. The singularities of the canonical model of $X_5$ are 10 $E_7$, 50 $A_1$ and 5 $A_9$. Although we do not know the invariant $h^1_\Omega(E_7)$, we know that it satisfies the bound $h^1_\Omega(E_7)<\frac{c_2(E_7)}{3!}=\frac{383}{288}$, where $c_2(E_7)$ is the local second Chern class of the singularity $E_7$, see \cite{ADOWI} and \cite{ADOWII}. For the $A_n$ singularities we know the formula of $h^1_\Omega(A_n)$, \cite{ADOWII}, but it is simpler to use the bound $h^1_\Omega(A_n)<\frac{n}{6}$ derived from the formula. Hence, we have the bound:

$$Lh^1_\Omega(X_5)=\sum_{x\in Sing(X_{5,can})}h^1_\Omega(x)<10\frac{383}{288}+50\frac{1}{6}+5\frac{9}{6}=29+\frac{19}{144}$$

\noindent while 

$$\frac{c_1^2(X_5)-c_2(X_5)}{3!}=-34$$
    
\end{remark}

\

\

\begin{proposition} Let $X \in \mathcal{H}_{p_g}$ be a genus $2$-fibration with non-classical orbifold base of general type. Then $p_g \geqslant 8$.
    
\end{proposition}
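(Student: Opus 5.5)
Let $f\colon X\to\PP^1$ be the genus $2$ fibration; the base is $\PP^1$ since $q(X)=0$. The plan is to bound from below the number of Campana multiple fibers and then the size of the sextic branch curve needed to carry them. For $C=\PP^1$ every Campana multiplicity is $m=2$ (Ogg's types (17), (18), (29), (30) all have non-classical multiplicity $2$). So $K_{C,\mathrm{orb}}$ has degree $-2+\nu/2$, where $\nu$ is the number of Campana multiple fibers, and orbifold general type forces $\nu\geqslant 5$. By \eqref{relation}, $X$ being Horikawa forces every singular fiber to be of Horikawa type $(0)$. Hence all $\nu\geqslant 5$ Campana fibers are of type (0), i.e. of Ogg's numerical type (29).

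Next I will place $X$ in Horikawa's diagram \eqref{dia}. Since all singular fibers are of type $(0)$, no $(-1)$-curves are contracted by $\phi$, and $X$ is the canonical resolution of the double cover of some $\FF_d$. The branch curve $B\in|6\Delta_0+kF|$ ($k$ even) has only simple singularities. The invariants are then computed exactly as in the proof of \cref{max}: $\chi(\mathcal O_X)$ and $p_g$ are determined by $k$ and $d$, with the class $6\Delta_0+kF$ written as $6\Delta+(k-6d)F$ to match the strata description $B\in|6\Delta+(p_g+3d+2)F|$. This gives $p_g=k-3d-2+\dots$, a linear expression in $k$ and $d$.

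For a type (0) Campana fiber $\Gamma$, the local model (Figure~1) forces $\Gamma\subset B$. Moreover $B-\Gamma$ has two cusps on $\Gamma$ with the cusp tangent equal to $\Gamma$, and $(B-\Gamma)\cdot\Gamma\geqslant 6$ is concentrated at those two points. With $\nu\geqslant 5$ such fibers, write $B=\Gamma_1+\dots+\Gamma_\nu+B'$ with $B'\in|6\Delta_0+(k-\nu)F|$. The key estimate is a lower bound on $k$ forced by $B'$ having $2\nu$ cusps positioned this way (i.e. $2\nu$ $e_7$ points of $B$). I would first try this: $B'$ must be reduced with no fiber components through these points and only simple singularities. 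So the $e_7$ points together with the tangency conditions impose, via adjunction or a genus count on the components of $B'$ (each cusp drops the geometric genus by one), $p_a(B')\geqslant 2\nu+\dots$. Combined with $p_a(B')=\frac12 B'(B'+K)+1$ computed on $\FF_d$, this should give a linear lower bound $k\geqslant$ something in $\nu$ and $d$, which at $\nu=5$ yields $p_g\geqslant 8$.

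The main obstacle is this last step: getting a sharp enough lower bound on the class of $B'$ from the $2\nu$ cusps, uniformly in $d$. The case where $B'$ contains $\Delta_0$ or is reducible needs separate treatment, and in that case I would use the other bounds. One is the Hodge bound \eqref{hodge} applied to the $(-2)$-curves: each type (0) fiber contributes $14$ from two $E_7$'s plus the $(-2)$-curves in the fiber of (29), giving $\rho_{-2}(X)\geqslant 14\nu$ with $\nu\geqslant5$ and $b_1=0$. The other is $c_2=10p_g+16$, $c_1^2=2p_g-4$, which gives $70\leqslant\frac16(48p_g+96)-1$, i.e. $p_g\geqslant 7$. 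Refining this count by the extra $(-2)$-curves in the Ogg (29) configuration not coming from $E_7$ (the fiber has $13$ components, so $12$ $(-2)$-curves per fiber, disjoint among fibers) should push it to $p_g\geqslant 8$. Because of that, I would actually lead with the Hodge-bound count, which is cleaner, and keep the branch-curve argument as backup.
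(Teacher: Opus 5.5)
\textbf{Gap: as written, your argument stops at $p_g\geqslant 7$.} The route you lead with is the paper's. At least five Campana fibers are needed, since each has multiplicity $2$ and $\deg K_{C,\mathrm{orb}}=-2+\nu/2>0$ forces $\nu\geqslant 5$. All of them are of type $(0)$ by \eqref{relation}. That gives ten disjoint $E_7$ configurations, hence at least $70$ $(-2)$-curves, which you feed into the Hodge bound with $b_1=0$.

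The shortfall comes from an arithmetic slip. The correct value is $5c_2-c_1^2=5(10p_g+16)-(2p_g-4)=48p_g+84$, not $48p_g+96$. So the Hodge bound reads $\rho_{-2}(X)\leqslant 8p_g+13$. Then $70\leqslant 8p_g+13$ gives $p_g\geqslant 57/8>7$, i.e.\ $p_g\geqslant 8$ immediately. The paper phrases this as $\rho_{E_7}(X)\leqslant \frac17(8p_g+13)$; its printed constant $\frac{83}{42}$ should be $\frac{13}{7}$, which does not change the conclusion.

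\textbf{The refinement you proposed would fail.} It rests on a wrong picture of Ogg's type (29) fiber. That fiber has $15$ components, not $13$:
\begin{itemize}
\item one $(-3)$-curve of multiplicity $2$, the strict transform of the preimage of $\Gamma$;
\item $14$ $(-2)$-curves, which are exactly the two $E_7$ configurations (central node of multiplicity $6$, with arms of multiplicities $5,4,3$; $4,2$; and $3$).
\end{itemize}
So there are no $(-2)$-curves in the fiber beyond the $14$ you already counted, and adding more would be double counting.

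The branch-curve backup is only a sketch: the formula for $p_g$ and the genus count are left unfinished. It is also not needed. Once the arithmetic is corrected, drop both the refinement and the backup, and your argument coincides with the paper's proof.
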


\begin{proof} There are two relevant bounds for the number \(\rho_{E_7}(X)\) of disjoint \(E_7\)-configurations of \((-2)\)-curves on a minimal surface of general type \(X\) with fixed Chern numbers \((c_1^2,c_2)\).

\medskip

\noindent
(\emph{Miyaoka bound})
\[
\rho_{E_7}(X)\left(8-\frac{1}{48}\right) \leqslant c_2 - \frac{1}{3}c_1^2.
\]

\medskip

\noindent
(\emph{Hodge bound})
\[
\rho_{E_7}(X) \leqslant \frac{1}{7}\left(\frac{5c_2 - c_1^2}{6} + b_1 - 1\right).
\]

\medskip

\noindent

We observe that while Miyaoka bounds the number of $E_7$-configurations of (-2)-curves, the Hodge bound does not take into account the configurations formed by the (-2)-curves, it just bounds the number of $(-2)$-curves, \cite{miyaoka1984maximal} and \cite{ADOWI}.

\medskip

For Horikawa surfaces the Hodge bound is stronger and gives
\[
\rho_{E_7}(X) \leqslant \frac{8}{7}p_g + \frac{83}{42},
\]
The result follows since for a Horikawa surface to have a fibration with a non-classical orbifold base of general type it needs at least 5 fibers of Campana multiple fibers of type $(0)$ and hence at least ten $E_7$-configurations of (-2)-curves. 
    
\end{proof}

\begin{remark} Although it is known that Horikawa surfaces are simply connected, we observe that the examples considered here of genus $2$-fibration with non-classical orbifold base of general type are easily seen to be simply connected. Indeed, these fibrations have no classical multiple fibers and the Campana multiple fibers of type~(0) present are simply connected: their irreducible components are smooth rational curves and their dual graph is a tree. Since the base is \(\mathbb P^1\), it follows from Lemma 5.8 of \cite{Campana05} that \(X\) is simply connected.
\end{remark}

\

By Bogomolov-McQuillan's results, genus $2$-fibration with non-classical orbifold base of general type satisfy the Green-Griffiths-Lang conjecture. Nevertheless, in this setting we can precisely describe the exceptional locus containing entire curves.

\begin{proposition}\label{locus}
Let $f: X \to \PP^1$ be a fibration with non-classical orbifold general type base on a projective surface $X$ with general fiber of genus $\geqslant 2$. Then all entire curves are contained in the singular fibers.
\end{proposition}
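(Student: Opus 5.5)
Let \(h\colon \CC\to X\) be a non-constant entire curve. The plan is to show that \(f\circ h\) is constant and then that the fiber containing the image must be singular.

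\emph{Step 1: \(f\circ h\) is constant.} Suppose \(f\circ h\colon\CC\to\PP^1\) is non-constant. Over a point \(p\) with Campana multiplicity \(m_p\), every component of \(f^{-1}(p)\) has multiplicity \(n_{p,j}\geqslant m_p\). Locally near a general point of such a component we have \(f^*t=u z^{n_{p,j}}\). Hence at any \(s_0\in\CC\) with \(f\circ h(s_0)=p\), the function \(t\circ f\circ h\) vanishes to order at least \(m_p\).

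One point needs care: \(h(s_0)\) might lie at a singular point of the fiber, i.e. an intersection of components or a singular point of the reduced fiber. There \(f^*t=\prod z_i^{n_i}\cdot u\) in suitable coordinates, with all \(n_i\geqslant m_p\). The vanishing order of the pullback is then \(\sum n_i\,\mathrm{ord}(z_i\circ h)\geqslant m_p\) as long as some \(\mathrm{ord}\) is positive, which holds since \(h(s_0)\) lies on the fiber. So \(f\circ h\) is an orbifold entire curve in \((\PP^1,\Delta(f))\) in Campana's sense, with multiplicity at least \(m_p\) over each \(p\).

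Since \(K_{\PP^1}+\Delta(f)\) has positive degree, \(\deg\Delta(f)>2\). The orbifold curve is therefore hyperbolic: the orbifold version of Picard's theorem, via Nevanlinna's second main theorem with truncated counting functions, gives
\[
T(r)\leqslant \sum_p \Bigl(1-\tfrac{1}{m_p}\Bigr)^{-1}\!\!\cdots,
\]
or more precisely
\[
\Bigl(\sum_p\bigl(1-\tfrac1{m_p}\bigr)-2\Bigr)T(r)\leqslant o(T(r)).
\]
This contradicts non-constancy. An alternative is to pass to an orbifold-étale cover if the orbifold is good. On \(\PP^1\) a bad orbifold has at most two orbifold points and is not of general type, so \((\PP^1,\Delta(f))\) is uniformized by the disc. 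The lift of \(f\circ h\) is then a holomorphic map \(\CC\to\) disc, hence constant. This uniformization step, lifting through a branched uniformization by local order-of-vanishing divisibility, is the step I would write carefully; it is the main technical point.

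\emph{Step 2: the fiber is singular.} Now \(h(\CC)\subset f^{-1}(c)\) for some \(c\). A smooth fiber has genus \(\geqslant 2\) and is therefore hyperbolic, so \(h\) would be constant. Hence \(h(\CC)\) lies in a singular fiber (indeed in one of its rational or elliptic components), which proves the statement.
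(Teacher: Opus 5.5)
Your main line of argument is correct and is the paper's. The paper likewise notes that $f\circ h$ meets each $p_i$ with order at least $m_i$, and invokes Nevanlinna's ramified version of Picard's theorem. That is exactly your truncated second main theorem, $\bigl(\sum_p(1-\tfrac{1}{m_p})-2\bigr)T(r)\leqslant o(T(r))$ outside an exceptional set. It then concludes with hyperbolicity of the smooth fibers. Your local check at singular points of the fiber is more careful than the paper's one-line remark, with one imprecision. Where the fiber is not normal crossing, the $z_i$ cannot be coordinates. Write instead $f^*t=u\prod_k g_k^{n_k}$ with $g_k$ local equations of the branches, so that $f^*t=(\prod_k g_k)^{m_p}\cdot(\text{holomorphic})$. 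This gives $\ord\geqslant m_p$ all the same.

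What you should discard is the first, garbled display. More importantly, discard the \emph{alternative} via orbifold uniformization, which you single out as the main technical point; it fails. Lifting $f\circ h$ through the branched uniformization $\mathbb{D}\to(\PP^1,\Delta(f))$ requires every local order of $f\circ h$ over $p$ to be divisible by $m_p$. The Campana condition only gives $\ord\geqslant m_p$. For instance, take the type (0) fiber, where $m_p=2$. A disc crossing a multiplicity-$3$ component transversally maps with order $3$ over $p$, and $s^3$ has no holomorphic square root.

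Divisibility is exactly what the classical multiplicity $m^*=\gcd$ provides. That is why the paper uses orbifold-\'etale covers only in Proposition~\ref{orbsakai}, and needs Nevanlinna theory in the non-classical case. If you want a non-Nevanlinna proof, use the hyperbolic cone metric of angles $2\pi/m_p$ as a metric rather than as a cover to lift through. Its pullback by a map with orders $\geqslant m_p$ stays continuous, so an Ahlfors--Schwarz argument applies.
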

\begin{proof}
Consider the fibration $f: X \to \PP^1$, its orbifold base $\Delta_f:=\sum_i (1-\frac{1}{m_i})p_i$ and an entire curve $g: \CC \to X$. From the definition of $\Delta_f$, the composed map $h:=f \circ g: \CC \to \PP^1$ is an entire curve with the property that if $h(t)=p_i$ then $\ord_t(h)\geqslant m_i$. From the generalization of little Picard theorem to maps with ramification \cite{Nev70}, one obtains that if the orbifold base is of general type, then $h$ is constant. Smooth fibers are of genus $\ge 2$ and hence hyperbolic, so the image of $g$ has to lie in a singular fiber.    
\end{proof}

\bibliographystyle{amsalpha}
\bibliography{references.bib}
\end{document}